\documentclass[10pt,notitlepage,twoside,a4paper]{amsart}
\usepackage{color}
\usepackage{verbatim}
\usepackage{ifthen}
\usepackage{amssymb}
\usepackage{amsmath, enumerate}
\usepackage{mathrsfs}

\usepackage[T1]{fontenc}
\usepackage[utf8]{inputenc}

\newtheorem{theorem}{Theorem}[section]

\newtheorem{lemma}[theorem]{Lemma}
\newtheorem{proposition}[theorem]{Proposition}

\newtheorem{definition}[theorem]{Definition}

\newtheorem{remark}[theorem]{Remark}

\numberwithin{equation}{section}

\makeatletter
\@namedef{subjclassname@2020}{%
  \textup{2020} Mathematics Subject Classification}
\makeatother

\def\T{\mathcal T}

\def\mc{\mathrm{mod_C\,}}
\def\e{\mathrm{e}}
\def\deg{\mathrm{deg}}
\def\essinf{\mathrm{essinf}\,}

\title[Age-Structured Epidemic Models]{Topological Degree Methods for Age-Structured Epidemic Models}
\author{Luisa Malaguti}
\address[Luisa Malaguti]{Department of Sciences and Methods for Engineering, University of Modena and Reggio Emilia, I-42122 Italy}
\email[L. Malaguti]{luisa.malaguti@unimore.it}
\author{Stefania Perrotta}
\address[Stefania Perrotta]{Department of Physics Informatics and Mathematics, University of Modena and Reggio Emilia, I-41125 Italy}
\email[S. Perrotta]{stefania.perrotta@unimore.it}

\keywords{Age structured SIRS model; evolution equations in abstract spaces; degree theory}

\subjclass{Primary 35A16. Secondary 92D30, 47H11}

\begin{document}

\maketitle
\begin{abstract}
This paper is devoted to the study of an age-structured SIRS epidemic model, in which a population affected by a disease is divided into susceptible, infected, and removed individuals. We assume that the force of infection may be nonlinear and time-dependent. The model, originally introduced and studied by Iannelli and his co-authors \cite{Iannelli}, can be naturally formulated in an abstract setting and has traditionally been analyzed using fixed point techniques, most often the Banach contraction principle.

\noindent Following the approaches of Inaba \cite{Inaba} and Banasiak \cite{Banasiak}, our investigation is based on the semigroup theory, through which we study the existence of mild (integral) solutions. The main novelty of our work lies in the use of the topological degree for condensing maps (see \cite{KOZ}) instead  of classical fixed-point arguments. We prove the existence of a unique, global, nonnegative solution to the model that satisfies the prescribed initial and nonlocal conditions and takes values in the space $L^1$ with respect to the age variable. Moreover, this solution depends continuously on the initial data.
\end{abstract}
%%%%%%%%%%%%%%%%%%%%%%%%%%%%%%%%%%%%%%%%%%%%%%%%%%%%%%%%%%%%%%%%%%%%%%%%%%%%%%%%%%%%%%%%%%%%%%
%            SEZIONE 1              %
%%%%%%%%%%%%%%%%%%%%%%%%%%%%%%%%%%%%%%%%%%%%%%%%%%%%%%%%%%%%%%%%%%%%%%%%%%%%%%%%%%%%%%%%%%%%%%
\section{Introduction}\label{s:introduction}
This paper deals with the spread of a disease in a population divided into susceptible, infected, and removed individuals. As is well known, \emph{susceptible}  refers to individuals who are not sick but can become infected; \emph{ infected } denotes individuals who have the disease and can transmit it; and \emph{removed} refers to individuals who have been infected and are now immune, dead, or isolated. We assume that all vital rates depend on the age of individuals, which varies over a bounded interval $[0, \omega]$. Thus $\omega<+\infty$ is the maximum age an individual of the population can reach. 

\noindent In the first  part of this work, precisely in Section \ref{s:existence},  we study the classical  SIRS  model  (\cite{Iannelli})

\begin{equation}
\label{modello}
\left\{
\begin{split}
 & \!s_t(a\,,t)+s_a(a\,,t)+\mu(a)s(a\,,t)=\!-\Lambda(a\,,i(\cdot\,,t))s(a\,,t)+\delta(a)i(a\,,t)\\
 & \!i_t(a\,,t)+i_a(a\,,t)+\mu(a)i(a\,,t)=\!\Lambda(a\,,i(\cdot\,,t))s(a\,,t)-(\delta(a)+\gamma(a)))i(a\,,t)\\
 & \!r_t(a\,,t)+r_a(a\,,t)+\mu(a)r(a\,,t)=\!\gamma(a)i(a\,,t)
\end{split}
\right.
\end{equation}
 where $s(a\,,t), \, i(a\,,t)$ and $r(a\,,t)$ respectively denote the age-densities of susceptible, infected, and removed individuals at time $t$ with $t\in[0\,,T]$. Thus, the age-density $n(a\,,t)$ of the whole population at time $t$ is 
 \[
  n(a\,,t):=s(a\,,t)+i(a\,,t)+r(a\,,t).
 \]
 
 The demographic evolution is driven by the birth and death rate, $\beta(a)$ and $\mu(a)$, respectively, and  $\Pi(a):=e^{-\int_0^a \mu(\sigma)\, d\sigma}$ is  the probability of survival of an individual to age $a$. Thus, the assumption
 \begin{equation}\label{eq:mu}
 \int_0^\omega \mu(a)\,da=+\infty
 \end{equation}
 ensures that no individual can live beyond $\omega$. 
 
 \par
 Moreover, $\delta(a)$ is the recovery rate and $\gamma(a)$ is the removal rate, that is, the recovery rate with permanent immunity.  As is clear from \eqref{modello}, 
 individuals who have recovered from the infection  are not necessarily immune.
 The force of infection is given by
 \[
   \Lambda(a\,,i(\cdot\,,t))=\int_0^\omega k(a\,,\sigma)i(\sigma\,,t)\,d\sigma.
 \]
 with $k$ bounded and nonnegative.

\par The system \eqref{modello} is equipped with the initial conditions
\begin{equation} \label{e:IC}
s(a\,,0)=s_0, \,\, i(a\,,0)=i_0, \,\, r(a\,,0)=r_0
 \end{equation}
with $s_0, i_0, r_0 \in L^1[0, \omega]$ and the nonlocal boundary conditions 
 \begin{equation}
 \label{BC}
 \begin{split}
     s(0\,,t) & = \int_0^\omega\beta(a) \left(s(a\,,t)+(1-p)i(a\,,t)+(1-q)r(a\,,t)\right) \,da  \\
     i(0\,,t) & = p \int_0^\omega\beta(a) i(a\,,t)\,da   \\
     r(0\,,t) & = q \int_0^\omega\beta(a) r(a\,,t)\,da   
 \end{split}
 \end{equation}
 with $p,q\in[0\,,1]$. 

 \par In the second part, in  Section \ref{s:nonlinear force}, we investigate a more general SIRS model (see \eqref{modello nonlin} ) where  the force of infection is given  by the nonlinear, time-dependent  term 
 \begin{equation}\label{e:ell}
    \ell\left(t,a,\Lambda(a,i(\cdot, t)\right) 
 \end{equation}

 \par
  The regularity assumptions on the functions appearing \eqref{modello} are discussed in Section \ref{s:existence} while the function $\ell$ is introduced in Section \ref{s:nonlinear force}.

Starting with the seminal work by Kermack and McKendrick \cite{KMcK},  the problem of describing and investigating the evolution in time of a disease in a population with a dynamical model  was considered and studied (see the book by Inaba \cite{Inaba} for references). Only  more recently has the age-dependent epidemic model \eqref{modello} been proposed and studied. The book by Iannelli \cite{Iannelli} documents his many contributions as well as those of his coauthors. 

Most of the discussion in  \cite[Chapter VI]{Iannelli} focuses on the case where the disease does not confer immunity, i.e., $\gamma=0$ and hence only the dynamic of susceptible and infected individuals becomes relevant, leading to the so-called SIS model. In \cite{Iannelli} it is also assumed that the reduction rate $R=:\int_0^{\omega}\beta(a) \Pi(a)\, da=1$. This latter condition implies, in particular,  that the whole population $n(a,t)$ has a finite configuration at infinity, $n_{\infty}(a)$ and the model is investigated under the assumption that the population is always in this asymptotic configuration. The force of infection $\Lambda$ in \cite{Iannelli} may also include the term 
\[
k_0(a)i(a,t), \quad \text{with  }k_0 \in L^{\infty}([0, \omega]).
\]
In this way, their  model can be reduced to a single, functional equation for the variable $i(a,t)$, which can then  be analyzed using  the Banach contraction principle. 

The model discussed in \cite{Inaba} treats the case in which the disease confers immunity, i.e., $\delta=0$. It includes an additional term that accounts for vaccination and assumes a population that is already at its stable age distribution. It is important to note that the analysis in \cite{Inaba} is carried out using a semigroup approach, and mild solutions are obtained.

It is worth mentioning several recent contributions by Banasiak and coauthors, primarily devoted to the analysis of the SIS model \cite{Banasiak, BM2015} (see also \cite{BA}, and \cite{BM}). In all these works, a semigroup approach is employed. In particular, the results in \cite{Banasiak} provide a construction of the semigroup, which is then computed in detail for the SIS model in \cite{MM}.

\smallskip

The study of epidemic models has seen significant growth over the last decade. In addition to age structure, spatial spread of individuals has also been incorporated. This has been done, in particular, by Colombo and coauthors \cite{CGMR2020}; see also \cite{CG2025} and \cite{CGMR2023} for a discussion of a more general modeling framework. In \cite{CGMR2020}, the authors include the dynamics of hospitalized individuals; moreover, as in \cite{Inaba}, they assume a disease that confers permanent immunity. The dispersal of individuals in their environment is also considered in \cite{CCZ}, where a reaction–diffusion SIS model with saturation is studied. The authors assume that the transmission and recovery rates depend on both space and time and study classical solutions of the model. In particular, when these rates are temporally periodic, periodic solutions are obtained.

The model \eqref{modello} is often reformulated within its natural abstract framework and subsequently analyzed using fixed-point techniques, in particular the Banach contraction principle. We adopt the same approach in this paper and rewrite problem \eqref{modello}–\eqref{e:IC}–\eqref{BC} in the form of the following initial value problem associated with a semilinear equation.
\begin{equation}
\label{eq:2:CP}
\left\{
\begin{aligned}
x'(t) & =Ax(t) + f(t\,,x(t)), \quad x\in X=:L^1\left([0\,,\omega]\,,\mathbb{R}^3\right), \, t \in[0, T] \\
x(0) & =x_0
\end{aligned}
\right.
\end{equation}
where  the linear part $A:D(A)\subset X\to X$ is not bounded, $f: [0\,,T]\times X \to X$ and $x_0 \in X$. The nonlocal conditions \eqref{BC} are incorporated into the domain $D(A)$. We refer to Sections~\ref{s:existence} and~\ref{s:nonlinear force} for the precise definition of problem \eqref{eq:2:CP}. Following the technique proposed by Banasiak for the SIS model, in Section~\ref{s:existence} we prove that $A$ generates a nonnegative $C_0$-semigroup $\{S(t)\}_{t \ge 0}$ on the space of continuous functions with values in $L^1\left([0\,,\omega]\,,\mathbb{R}^3\right)$ (see \cite{BA} and \cite[Chap.~1]{Pazy} for the related definitions and properties), and we establish the main properties of this semigroup (see Propositions~\ref{p:semigroup1} and \ref{p:semigroup2}). 
Our main result is contained in Theorem~\ref{t:SIRS2}, which deals with problem \eqref{modello nonlin}–\eqref{e:IC}–\eqref{BC}, where the force of infection is given as in \eqref{e:ell}. We assume a.e. nonnegative initial data $s_0, i_0$ and $r_0$ and we prove the existence of a unique, nonnegative solution in the space  $\mathcal{C}\left([0\,,T]\,,L^{1}\left([0\,,\omega] \,,\mathbb{R}^3\right)\right)$, as well as its continuous dependence on the initial data. Finally, additional regularity on $s_0, i_0$, and $r_0$ ensures that the solution is classical. The proof is given in Section \ref{s:nonlinear force}.
The novelty of our investigation lies in the use of the topological degree for condensing (multi)maps, introduced in \cite{KOZ} (see Section~\ref{s:degree} for a brief presentation). To the best of our knowledge, this method has not previously been applied to the study of epidemiological models. It allows us to avoid any restrictions on the reproduction rate $R$ and to assume that infected individuals may become partly susceptible again and partly immune to the disease. In addition, it allows for a nonlinear, time-dependent force of infection. Moreover, our method permits the inclusion of an additional linear term in the force of infection (as in \cite{Iannelli}), a vaccination rate (as in \cite{Inaba}), and the dynamics of hospitalized individuals (as in \cite{CGMR2020}). In the present discussion, we have omitted these additional terms to focus on illustrating the new topological method employed. For this reason, we begin our discussion in Section~\ref{s:existence} with the classical SIRS model \eqref{modello}–\eqref{e:IC}–\eqref{BC}, featuring a linear force of infection, in order to illustrate the application of this degree in the simplest possible setting (see Theorem \ref{t:SIRS}).
Section ~\ref{s:notations} contains some preliminary results.

\smallskip
The topological degree employed in this paper has previously been applied to the study of periodic and nonlocal solutions of transport equations \cite{MP1}, diffusion processes \cite{BC}, and also in the multivalued setting (\cite{BLM}).

%%%%%%%%%%%%%%%%%%%%%%%%%%%%%%%%%%%%%%%%%%%%%%%%%%%%%%%%%%%%%%%%%%%%%%%%%%%%%%%%%%%%%%%%%%%%%
%            SEZIONE 2              %
%%%%%%%%%%%%%%%%%%%%%%%%%%%%%%%%%%%%%%%%%%%%%%%%%%%%%%%%%%%%%%%
%%%%%%%%%%%%%%%%%%%%%%%%%%%%%%%
\section{Notations and preliminary results}\label{s:notations}

%The aim of this paper is to establish the existence of positive, summable solutions to the model. To this end, the abstract formulation of the problem will be set in 
This part contains some preliminary results for the investigation of the semilinear equation contained in \eqref{eq:2:CP}. In the following, we equip the Banach space $X$ (see \eqref{eq:2:CP}) with its usual norm
    \[
\|\psi\|_1=\|\psi_1\|_1+\|\psi_2\|_1+\|\psi_3\|_1=
\int_0^\omega (|\psi_1(a)|+|\psi_2(a)|+|\psi_3(a)|)\,da,
\]
$\psi\in L^1\left([0\,,\omega]\,,\mathbb{R}^3\right)$, and
 partial order
\[
\psi\leq\phi \quad\iff\quad 
\psi_i(a)\leq\phi_i(a)\,\, \text{for a.e. } a\in[0\,,\omega], \text{ for all } i=1,2,3.
\]
We will denote with $X^+$ the closed convex cone of nonnegative functions: 
\[
X^+=\left\{
   \psi=(\psi_1\,,\psi_2\,,\psi_3)\in L^{1}\left([0\,,\omega]\,,\mathbb{R}^3\right):\,
   \psi_i(a)\geq 0 \text{ a.e.\ in } [0\,,\omega],\, i=1,2,3
 \right\}.
\]

\begin{definition}
  A linear operator $L$ on $X$ is called positive if $Lx \in X^+$ for every $x \in X^+$.
Moreover, a semigroup $\{S(t)\}_{t \ge 0}$ is called positive if $S(t)$ is positive for every $t \ge 0$.
\end{definition}

\par\noindent A function $x\in\mathcal{C}([0\,,T]\,,X)$ with $x(0)=x_0$ is said to be a \emph{classical solution} of \eqref{eq:2:CP} (see, e.g., \cite{Pazy}) if it is continuously differentiable with  $x(t) \in D(A)$ and it satisfies  \eqref{eq:2:CP}$_1$ for all $t\in  (0\,,T)$. \par\noindent Instead,  $x\in\mathcal{C}([0\,,T]\,,X)$ is said a \emph{mild solution} of \eqref{eq:2:CP} if
\begin{equation}
\label{e:ms}
x(t) =S(t)x_0 + \int_0^t S(t-s) f(s,x(s)) ds
\end{equation}
for every $t\in[0\,,T]$. \par\noindent  When $f$ is continuous,   every classical solution is also a mild solution of \eqref{eq:2:CP} while the converse is not true, in general. 
\par
The following result deals with the local existence and uniqueness of mild and classical solutions of \eqref{eq:2:CP}. It is a consequence of \cite[Thm 1.4 and Thm 1.5, Chap 6]{Pazy}.
\begin{theorem}
    \label{t:regularity}
 Let $A$ be the infinitesimal generator of a $ C_0$-semigroup $ \{ S(t) \}_{t \geq 0} $. If $f \colon  [0,T]\times X \to X$ is continuously differentiable, then the Cauchy problem \eqref{eq:2:CP} admits locally one and only one mild solution. Moreover, if $x_0\in D(A)$, then the mild solution is a  classical solution.
\end{theorem}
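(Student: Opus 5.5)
The plan is to follow Pazy's approach and treat the two claims separately: first local existence and uniqueness of a mild solution via a contraction argument, then regularity for $x_0\in D(A)$ via a differentiated integral equation.

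\emph{Local mild solution.} Since $\{S(t)\}_{t\ge0}$ is a $C_0$-semigroup, there are $M\ge1$ and $w\ge0$ with $\|S(t)\|\le Me^{wt}$. Because $f$ is continuously differentiable, its derivative $D_xf$ is bounded on a neighbourhood $[0,t_1]\times \overline{B}(x_0,r)$, at least after shrinking $t_1$ and $r$. By the mean value inequality, $f$ is therefore Lipschitz in $x$ there, with some constant $L$, and also bounded, $\|f(t,x)\|\le N$. On the closed set
\[
K=\{x\in\mathcal{C}([0,t_1'],X):\ x(0)=x_0,\ \|x(t)-x_0\|\le r\}
\]
I define the operator $(\Gamma x)(t)=S(t)x_0+\int_0^tS(t-s)f(s,x(s))\,ds$. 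By strong continuity, $\|S(t)x_0-x_0\|$ is small for small $t$. Taking $t_1'\le t_1$ small enough gives $\Gamma(K)\subset K$, and also
\[
\|\Gamma x-\Gamma y\|_\infty\le Me^{wt_1'}Lt_1'\|x-y\|_\infty<\tfrac12\|x-y\|_\infty .
\]
The Banach contraction principle then yields a unique fixed point in $K$, which is a mild solution. Uniqueness among all mild solutions on any common interval follows from Gronwall's inequality applied to the difference of two solutions, using the local Lipschitz bound along their compact ranges.

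\emph{Regularity for $x_0\in D(A)$.} Let $u$ be the mild solution on $[0,t_1']$. I consider the linear integral equation
\[
v(t)=S(t)f(0,x_0)+S(t)Ax_0+\int_0^tS(t-s)\big[\partial_tf(s,u(s))+D_xf(s,u(s))v(s)\big]\,ds .
\]
Its coefficients are continuous and bounded, so it has a unique continuous solution $v$ by a Volterra or Gronwall argument. Next I set $w_h(t)=(u(t+h)-u(t))/h-v(t)$. I rewrite $u(t+h)$ through the mild formula, splitting $\int_0^{t+h}=\int_0^h+\int_h^{t+h}$. Using $x_0\in D(A)$, the terms $(S(t+h)x_0-S(t)x_0)/h$ converge to $S(t)Ax_0$. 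The expansion $f(s,u(s+h))-f(s,u(s))=D_xf\cdot(u(s+h)-u(s))+o(h)$, uniformly in $s$, together with Gronwall's inequality, gives $w_h\to0$ uniformly. Hence $u\in C^1$ with $u'=v$.

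It remains to show $u(t)\in D(A)$ and that the equation holds. Since $s\mapsto f(s,u(s))$ is now $C^1$, the standard result for inhomogeneous problems (Pazy, Chap.~4, Cor.~2.5) shows that $\int_0^tS(t-s)f(s,u(s))\,ds\in D(A)$. It also shows that this integral is differentiable with derivative $A\int_0^t S(t-s)f(s,u(s))\,ds+f(t,u(t))$. Combined with $S(t)x_0\in D(A)$, this gives $u'=Au+f(t,u)$.

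\emph{Main obstacle.} I expect the hardest step to be the uniform difference-quotient estimate. It requires uniform continuity of $D_xf$ on the compact set $\{(s,u(s))\}$ in order to control the remainder $o(h)$ uniformly in $s$, and the Gronwall bookkeeping must be handled carefully.
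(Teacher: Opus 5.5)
Your proposal is correct and is essentially the proof of Pazy, Chap.~6, Thms.~1.4 and~1.5, which the paper only cites; your Lipschitz bound on a small neighbourhood of $(0,x_0)$ is the right way to handle a merely $C^1$ map, since such a map need not be Lipschitz on bounded sets of an infinite-dimensional space. One point needs tightening: in the difference-quotient step the remainder is $o(|h|+\|u(s+h)-u(s)\|)$ rather than $o(h)$, so either first show $u$ is Lipschitz on $[0,t_1']$ (a Gronwall estimate on $u(t+h)-u(t)$ using $x_0\in D(A)$), or use $\|u(s+h)-u(s)\|/h\le\|w_h(s)\|+\|v\|_\infty$ and absorb the resulting small multiple of $\|w_h(s)\|$ into the Gronwall inequality.
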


In order to show that the solution of \eqref{modello} takes nonnegative values, we need to replace the function $f$ in \eqref{eq:2:CP} with $\mathcal{F}=f+c\mathbb{I}$ where $\mathbb{I}:X\to X$ is the identity map and $c$ a suitable real value. We are, therefore, led to consider the new problem
  \begin{equation}
\label{eq:2:CP-cI}
\left\{
\begin{aligned}
y'(t) & =\mathcal{A}y(t) + \mathcal{F}(t, y(t)) \\
y(0) & =x_0
\end{aligned}
\right.
\end{equation}
where $\mathcal{A}= A-c\mathbb{I}$. The following result state the equivalence between \eqref{eq:2:CP} and \eqref{eq:2:CP-cI}.
\begin{proposition}
\label{p:equivalenza}
The mild solutions of the Cauchy problem \eqref{eq:2:CP} and of the Cauchy problem \eqref{eq:2:CP-cI}
are the same.
\end{proposition}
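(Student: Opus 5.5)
The plan is to use the explicit relation between the two semigroups and then verify directly that each mild-solution identity implies the other. Since $c\mathbb{I}$ is bounded and commutes with $A$, the bounded perturbation theorem (see \cite[Chap.~3]{Pazy}) shows that $\mathcal{A}=A-c\mathbb{I}$, with $D(\mathcal{A})=D(A)$, generates the $C_0$-semigroup
\[
T(t)=e^{-ct}S(t), \qquad t\ge 0.
\]
One can check this directly: $\{e^{-ct}S(t)\}_{t\ge0}$ is a $C_0$-semigroup, and differentiating at $t=0$ on $D(A)$ gives $(A-c\mathbb{I})x$. The same computation shows that the domains of the two generators coincide.

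Next, let $x\in\mathcal C([0,T],X)$ be a mild solution of \eqref{eq:2:CP}. I must show that
\[
x(t)=e^{-ct}S(t)x_0+\int_0^t e^{-c(t-s)}S(t-s)\bigl(f(s,x(s))+c\,x(s)\bigr)\,ds .
\]
The natural approach is the standard variation-of-constants argument used for bounded perturbations. Set $g(s)=f(s,x(s))$, which is continuous (or at least Bochner integrable) by the continuity of $f$ and $x$. I will substitute the expression \eqref{e:ms} for $x(s)$ into the term $c\int_0^t e^{-c(t-s)}S(t-s)x(s)\,ds$. The semigroup property gives $S(t-s)S(s)=S(t)$ and $S(t-s)S(s-\tau)=S(t-\tau)$, and Fubini's theorem lets me exchange the order of integration. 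The resulting scalar integrals are
\[
c\int_0^t e^{-c(t-s)}\,ds = 1-e^{-ct},
\qquad
c\int_\tau^t e^{-c(t-s)}\,ds = 1-e^{-c(t-\tau)} .
\]
Adding these contributions to $e^{-ct}S(t)x_0+\int_0^t e^{-c(t-s)}S(t-s)g(s)\,ds$, the exponential terms cancel. What remains is $S(t)x_0+\int_0^tS(t-\tau)g(\tau)\,d\tau$, which equals $x(t)$. So $x$ is a mild solution of \eqref{eq:2:CP-cI}.

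The converse follows by the same computation with the roles exchanged. Here $A=\mathcal{A}+c\mathbb{I}$, $S(t)=e^{ct}T(t)$ and $f=\mathcal F-c\mathbb I$, so the symmetric identity uses $-c$ in place of $c$. The main obstacle is justifying the use of Fubini's theorem for Bochner integrals in $X$. I plan to handle it by noting two facts. First, $(s,\tau)\mapsto S(t-\tau)g(\tau)$ is strongly measurable. Second, it is bounded in norm by $Me^{\omega_0 T}\sup\|g\|$ on the triangle $0\le\tau\le s\le t$, using the standard growth bound $\|S(t)\|\le Me^{\omega_0 t}$.
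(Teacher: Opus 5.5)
Your proposal is correct, and its forward direction is exactly the paper's part (1). You substitute \eqref{e:ms} into $c\int_0^t e^{-c(t-s)}S(t-s)x(s)\,ds$, use the semigroup property and Fubini, and let the scalar identities cancel the exponential terms.

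The converse is where you genuinely differ. The paper does not argue by symmetry; it performs a separate computation. It replaces $e^{-c(t-s)}$ by $1-\int_s^t c\,e^{-c(\tau-s)}\,d\tau$ in \eqref{e:ms-cI} and applies Fubini to the resulting double integral. It arrives at $y(t)=S(t)x_0+\int_0^tS(t-s)f(s,y(s))\,ds+c\int_0^tS(t-s)\zeta(s)\,ds$, where $\zeta(s)$ is the defect of the mild formula \eqref{e:ms-cI} at time $s$ and therefore vanishes.

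You instead note that the forward computation uses only two ingredients: the semigroup property, and the identities $c\int_\tau^t e^{-c(t-s)}\,ds=1-e^{-c(t-\tau)}$, which hold for every real $c$. So it applies verbatim with generator $\mathcal{A}$, semigroup $e^{-ct}S(t)$, nonlinearity $\mathcal{F}$ and constant $-c$. Since $\mathcal{A}+c\mathbb{I}=A$, $\mathcal{F}-c\mathbb{I}=f$ and $e^{ct}e^{-ct}S(t)=S(t)$, this is precisely the converse.

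Your route is shorter and makes the symmetry of the relation explicit, while the paper's is a self-contained direct verification. You also check, rather than merely assert as the paper does, that $e^{-ct}S(t)$ is the semigroup generated by $A-c\mathbb{I}$ with domain $D(A)$.

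One small point: the sup bound in your Fubini justification needs $g$ bounded. That holds when $f$ is continuous and $x\in\mathcal{C}([0,T],X)$. For merely integrable $g$, the estimate $\int_0^t\int_0^s\|g(\tau)\|\,d\tau\,ds<\infty$, together with the uniform bound on $\|S(t)\|$ over $[0,T]$, does the same job.
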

\begin{proof}
The $ C_0$-semigroup generated by $\mathcal{A}$  is  
 $\{\e^{-ct} S(t) \}_{t \geq 0}$, so $y\in\mathcal{C}([0\,,T]\,,X)$ is a mild solution of \eqref{eq:2:CP-cI} if
 \begin{equation}
\label{e:ms-cI}
y(t) =\e^{-ct}S(t)x_0 + \int_0^t \e^{-c(t-s)}S(t-s) [f(s\,,y(s))+c y(s)] ds 
\end{equation}
for every $t\in[0\,,T]$.
\par
\smallskip

(1)\,\,  Assume that $x$ satisfy \eqref{e:ms},  by Fubini's theorem we have
\[
\begin{split}
 \int_0^t  c\e^{-c(t-s)} & S(t-s)  x(s)\, ds \\
 &= S(t)x_0-\e^{-ct}S(t)x_0  + \int_0^t\left( 1- \e^{-c(t-\tau)}\right)S(t-\tau) f(\tau\,,x(\tau))\,d\tau.
\end{split} 
\]
Therefore,  since $x$ satisfies \eqref{e:ms}, we have that
\[
\begin{split}
 \e^{-ct}S(t)x_0 & + \int_0^t \e^{-c(t-s)}S(t-s) [{f(s\,,x(s))}+cx(s)]\, ds
 = \e^{-ct}S(t)x_0\\
 & + \int_0^t c\e^{-c(t-s)}S(t-s) x(s)\, ds+ \int_0^t \e^{-c(t-s)}S(t-s) f(s\,,x(s))\, ds \\
 = &\, \e^{-ct}S(t)x_0+S(t)x_0-\e^{-ct}S(t)x_0  + \int_0^t \e^{-c(t-s)}S(t-s) f(s\,,x(s))\, ds \\
 & + \int_0^t\left( 1- \e^{-c(t-\tau)}\right)S(t-\tau) f(\tau\,,x(\tau))\,d\tau \\
 =&\, S(t)x_0+\int_0^t S(t-s) f(s\,,x(s))\, ds=x(t)
\end{split}
\]
concluding that $x$ is a mild solution of \eqref{eq:2:CP-cI}.

\par\smallskip

(2)\,\, Now, let $y$ satisfy \eqref{e:ms-cI}. Notice that for every $s,t\in[0\,,T]$, $s\leq t$,
\begin{equation}
    \label{e:integrali}
  \e^{-c(t-s)}=1-\int_s^t{c}\e^{-c(\tau-s)}\,d\tau
  \qquad\text{and}\qquad
  \int_0^t\e^{-c(t-s)}\,ds = \int_0^t\e^{-cs}\,ds.
\end{equation}
So, by the first identity in \eqref{e:integrali} we get:
\[
\begin{split}
 y(t) = & \e^{-ct}S(t)x_0 + \int_0^t \e^{-c(t-s)}S(t-s) [f(s\,,y(s))+cy(s)] ds \\
 = & S(t)x_0+\int_0^t S(t-s)f(s
 \,,y(s))\,ds
     +c\int_0^tS(t-s)y(s)\,ds \\
   & -\int_0^t  c\e^{-c(t-s)}S(t)x_0\,ds- \int_0^t\int_s^t c\e^{-c(\tau-s)} S(t-s)[f(s\,,y(s))+cy(s)]\,d\tau\,ds.  
\end{split}
\]
Applying Fubini's theorem  we obtain:
\[
\begin{split}
 \int_0^t\int_s^t \e^{-c(\tau-s)} S(t-s) & [f(s\,,y(s))+ cy(s)]\, d\tau\,ds  \\
 & = \int_0^t S(t-s)\int_0^s\e^{-c(s-\tau)}S(s-\tau)[f(\tau\,,y(\tau))+cy(\tau)]\,d\tau\,ds.
\end{split}
\]

\par By the second identity in \eqref{e:integrali} we have that
\[
\int_0^t e^{-c(t-s)}\, dsS(t)x_0=\int_0^t e^{-cs}\, dsS(t)x_0=\int_0^t S(t-s)e^{-cs}S(s)x_0\, ds
\]
so we conclude that
\[
\begin{split}
 y(t) = & S(t)x_0+\int_0^t S(t-s)f(s\,,y(s))\,ds  \\
        & +c\int_0^tS(t-s)y(s)\,ds 
          -c\int_0^t\e^{-c(t- s)}S(t)x_0\,ds  \\
        & - c\int_0^t S(t-s)\int_0^s\e^{-c(s-\tau)}S(s-\tau)[f(\tau\,,y(\tau))+cy(\tau)]\,d\tau\,ds \\
      = &  S(t)x_0+\int_0^t S(t-s)f(s\,,y(s))\,ds + 
          c\int_0^tS(t-s)\zeta(s)\,ds 
\end{split}
\]
where, by \eqref{e:ms-cI}, 
\[
\zeta(s)=y(s)- \e^{-cs}S(s)x_0
-\int_0^s\e^{-c(s-\tau)}S(s-\tau)[f(\tau\,,y(\tau))+cy(\tau)]\,d\tau
=0.
\]
The previous computations show that $y$ satisfies \eqref{e:ms}.
\end{proof}

%%%%%%%%%%%%%%%%%%%%%%%%%%%%%%%%%%%%%%%%%%%%%%%%%%%%%%%%%%%%%%%%%%%%%%%%%%%%%%MISURE%DI%NON%COMPATTEZZA%%%%%%%%%%%%%%%%%%%%%%%%%%%%%%%%%%%%%%%%%%%%%%%%%%%%%%%%%%%%%%%%%%%%%%%%%%%%%%%%

%The fixed point theorem on which our discussion is based (see Section~\ref{s:degree}) uses the topological degree for condensing multimaps. We will therefore begin by presenting the topological tools we will use. We start by introducing some measures of non-compactness (m.n.c. for short) in Banach spaces and their related properties.
Our investigation of problem \eqref{eq:2:CP} also requires the use of suitable measures of non-compactness (m.n.c. for short) and of condensing maps.

Given a non empty subset $C$ of a Banach space $E$, the {\it Hausdorff m.n.c.}\ of $C$ (see e.g.~ \cite[Chap.~2]{KOZ}) is the function $\chi:\mathcal{P}(E)\to [0\,,+\infty]$ defined by
\begin{displaymath}
\chi(C)=\inf\left\{
\epsilon>0:\,\,\exists\, x_1,\dots x_k\in X\textrm{ such that } C\subset\bigcup_{i=1}^kB_\epsilon(x_i)
\right\}.
\end{displaymath}
if $C$ is bounded and $\chi(C)=+\infty$ if $C$ is unbounded.
\par
The next properties of $\chi$ will be used in Sections~\ref{s:existence} and~\ref{s:nonlinear force}; they easily follow from the definition and will be stated without proof.
\begin{proposition}
\label{prop:2:chi} If $\chi:\mathcal{P}(E)\to [0\,,+\infty]$ is the Hausdorff m.n.c.\ defined above then
\begin{itemize}
\item[(i)] $\chi(C)=0$ if and only if $C$ is relatively compact;
\item[(ii)] if $C_1\subset C_2\subset E$ then $\chi(C_1)\leq\chi(C_2)$;
\item[(iii)] for every $C_1,C_2\subset E$, $\chi(C_1\cup C_2)\leq\max\{\chi(C_1)\,,\chi(C_2)\}$;
\item[(iv)] for every $C_1,C_2\subset E$, $\chi(C_1+C_2)\leq \chi(C_1)+\chi(C_2)$;
\item[(v)]  if $Y$ is  a Banach space and $\Phi:E\to Y$ is a Lipschitz function with constant $L$, then for every $C\subseteq E$, $\chi(\Phi(C))\leq L\chi(C)$;
\item[(vi)] for every set $C\subset E$, $\chi(C)=\chi\left(\cup_{\lambda\in[0\,,1]}\lambda C\right)$.
\end{itemize}
\end{proposition}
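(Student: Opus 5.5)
Each item follows directly from the definition of $\chi$ as an infimum over radii of finite $\epsilon$-nets. For every item, if one of the sets involved is unbounded, the inequality is trivial because the right-hand side equals $+\infty$, or both sides do. So throughout I assume boundedness. It is also convenient to allow the centres $x_i$ to range over $E$.

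For (i): if $C$ is relatively compact, then for every $\epsilon>0$ its closure is covered by finitely many balls $B_\epsilon(x_i)$, so $\chi(C)=0$. Conversely, $\chi(C)=0$ means that $C$ admits a finite $\epsilon$-net for every $\epsilon>0$, i.e.\ $C$ is totally bounded. Since $E$ is complete, $\overline C$ is then compact. This is the only point where completeness of $E$ is used.

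Items (ii) and (iii) are pure covering arguments.
\begin{itemize}
\item For (ii), any finite $\epsilon$-net for $C_2$ is also one for $C_1$.
\item For (iii), take $\epsilon>\max\{\chi(C_1),\chi(C_2)\}$. Choose finite $\epsilon$-nets for $C_1$ and for $C_2$; their union is a finite $\epsilon$-net for $C_1\cup C_2$. Letting $\epsilon$ decrease to the maximum gives the claim.
\end{itemize}

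Items (iv) and (v) use the linear and Lipschitz structure.
\begin{itemize}
\item For (iv), let $\epsilon_j>\chi(C_j)$ with nets $\{x_i\}$ and $\{y_k\}$. For $c_1\in B_{\epsilon_1}(x_i)$ and $c_2\in B_{\epsilon_2}(y_k)$, the triangle inequality gives $\|c_1+c_2-(x_i+y_k)\|<\epsilon_1+\epsilon_2$. Hence $\{x_i+y_k\}$ is a finite $(\epsilon_1+\epsilon_2)$-net for $C_1+C_2$.
\item For (v), if $\{x_i\}$ is an $\epsilon$-net for $C$, then $\|\Phi(c)-\Phi(x_i)\|\le L\|c-x_i\|$. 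So $\{\Phi(x_i)\}$ is an $L\epsilon$-net for $\Phi(C)$; the case $L=0$ is trivial since $\Phi(C)$ is then a single point.
\end{itemize}

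Item (vi) is the only step needing care, because $\bigcup_{\lambda\in[0,1]}\lambda C$ is an infinite union, so (iii) cannot be applied directly. The inequality $\ge$ follows from (ii), since $C=1\cdot C$ is contained in the union. For $\le$, write $C'$ for the union. By (iv) and (v), $C'\subset [0,1]\cdot C\subset \{\lambda c\}$. I would exploit that the map $(\lambda,c)\mapsto\lambda c$ has $\|\lambda c-\mu c\|\le|\lambda-\mu|\,\sup_{C}\|c\|=:|\lambda-\mu|M$.
\begin{enumerate}
\item Fix $\epsilon>\chi(C)$ with net $\{x_i\}$, and $\eta>0$.
\item Choose a finite grid $\{\lambda_j\}\subset[0,1]$ of mesh $\eta/M$.
\item Given $\lambda c$ in $C'$, pick $\lambda_j$ with $|\lambda-\lambda_j|\le\eta/M$ and $x_i$ with $\|c-x_i\|<\epsilon$. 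Then
\[
\|\lambda c-\lambda_j x_i\|\le|\lambda-\lambda_j|\,\|c\|+\lambda_j\|c-x_i\|<\eta+\epsilon .
\]
\end{enumerate}
Thus $\{\lambda_j x_i\}$ is a finite $(\epsilon+\eta)$-net for $C'$, and letting $\eta\to0$ and $\epsilon\downarrow\chi(C)$ gives equality.
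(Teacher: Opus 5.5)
Your proof is correct and is the direct verification from the definition that the paper intends (the paper states these properties without proof), and your grid net $\{\lambda_j x_i\}$ handles the infinite union in (vi) properly. Two cosmetic fixes: drop the sentence ``By (iv) and (v), $C'\subset[0,1]\cdot C\subset\{\lambda c\}$'', which says nothing and is never used, and treat the case $M=0$ (i.e.\ $C\subseteq\{0\}$, where $C'=C$) separately before you choose a grid of mesh $\eta/M$.
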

\par
The following result is proved in \cite[Theorem 4.2.2 and Collorary 4.2.4]{KOZ}.

\begin{theorem}
\label{thm:2:KOZ}
Let $\{S(t)\}_{t \geq 0}$ be a $C_0$-semigroup on a Banach space $E$  and $F$ be the linear operator from $L^1([0\,,T]\,,E)$ to $\mathcal{C}([0\,,T]\,,E)$ defined by
\begin{equation}
\label{eq:2:CO}
F(f)(t)=\int_0^tS(t-s)f(s)\,ds, \quad f\in L^1([0\,,T]\,,E) \textrm{ and } t\in[0\,,T].
\end{equation}
Let $q\in L^1(0\,,T)$ and $\{f_n\}_n\subset L^1([0\,,T]\,,E)$ be  such that
\begin{displaymath}
\chi\left(\{f_n(t)\}_n\right)\leq q(t),\qquad \textrm{ for a.e.~$t\in[0\,,T]$. }
\end{displaymath}
Then
\begin{displaymath}
\chi\left(\{F(f_n)(t)\}_n\right)\leq 2L\int_0^t q(s)\,ds,\qquad \textrm{ for every $t\in[0\,,T]$, }
\end{displaymath}
where $L>0$ is such that $\|S(t)\|\leq L$ for every $t\in[0\,,T]$.
If the Banach space $E$ is separable, then
\begin{displaymath}
\chi\left(\{F(f_n)(t)\}_n\right)\leq L \int_0^t q(s)\,ds,\qquad \textrm{ for every $t\in[0\,,T]$. }
\end{displaymath}
\end{theorem}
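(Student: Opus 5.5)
The plan is to first establish the estimate with the constant $2L$ for a general Banach space $E$, and then sharpen it to $L$ when $E$ is separable. Throughout, $L$ denotes a bound for $\|S(t)\|$ on $[0,T]$; such a bound exists by the uniform boundedness principle. Fix $t\in[0,T]$ and write $g_n(s)=S(t-s)f_n(s)$ for $s\in[0,t]$. Each $S(t-s)$ is a bounded linear operator with norm at most $L$, so Proposition~\ref{prop:2:chi}(v) gives
\[
\chi(\{g_n(s)\}_n)\le L\,q(s)\qquad\text{for a.e. } s\in[0,t].
\]
The statement then reduces to an estimate for the measure of noncompactness of integrals of a countable family: for a countable family $\{g_n\}\subset L^1([0,t],E)$ with $\chi(\{g_n(s)\}_n)\le \tilde q(s)$ a.e., we want
\[
\chi\Bigl(\Bigl\{\int_0^t g_n\Bigr\}_n\Bigr)\le 2\int_0^t \tilde q .
\]
In the separable case the factor $2$ should drop. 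Applying this with $\tilde q=Lq$ then yields both claims.

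To prove the integral estimate I would argue as follows. Since the family is countable and each $g_n$ is Bochner integrable, each $g_n$ is essentially separably valued. Hence we may restrict attention to a closed separable subspace $E_0\subset E$ containing all the values. The factor $2$ arises because the Hausdorff measure of noncompactness computed relative to $E_0$, which I will call $\chi_{E_0}$, satisfies $\chi_{E_0}\le 2\chi_E$: centers of an $\epsilon$-net in $E$ can be replaced by nearby points of $E_0$ at the cost of doubling the radius. When $E$ itself is separable this doubling is unnecessary.

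The key step is then the separable case. Here I would use Egorov/Lusin-type arguments: for a countable family of measurable functions into a separable space, the map $s\mapsto\chi(\{g_n(s)\}_n)$ is measurable, and outside a set of small measure one can approximate it uniformly by simple data. On each piece of a fine partition, I would choose a finite $(\tilde q(s)+\epsilon)$-net of $\{g_n(s)\}$ that varies measurably in $s$, obtained by a measurable selection. Integrating these nets produces a finite net for $\{\int g_n\}$ with radius $\int\tilde q+O(\epsilon)$. Uniform integrability of the tails is controlled by the $L^1$ dominations; this needs a bound of the form $\|g_n(s)\|\le\nu(s)$, or else a truncation on sets of small measure together with absolute continuity of each integral.

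The main obstacle is the separable integral inequality. The delicate point is constructing measurable $\epsilon$-nets uniformly in $n$, since we have no integrable majorant of $\|g_n\|$. I would try first the approach of \cite{KOZ}: reduce via the Heinz–Mönch-type lemma that in a separable space $\chi(\{g_n(s)\})$ equals $\lim_m\sup_{n}\operatorname{dist}(g_n(s),\operatorname{span}\{e_1,\dots,e_m\})$, computed with suitable projections. Then integrate using Fatou's lemma. Finally I would verify that the conclusion holds for every $t$, which follows because $t$ was arbitrary.
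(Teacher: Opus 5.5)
There is a genuine gap, and it is exactly where you hesitated: without an integrable majorant for $\{f_n\}$ the inequality is false, so no truncation argument can close it. Take $E=\ell^2$ (separable), $S(t)=I$ (so $L=1$), and $f_n(s)=n\,\mathbf{1}_{[0,1/n]}(s)\,e_n$.

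\begin{itemize}
\item For every $s>0$ the set $\{f_n(s)\}_n$ is finite. Hence $\chi(\{f_n(s)\}_n)=0$ and $q\equiv 0$ is admissible.
\item However, $F(f_n)(t)=e_n$ for all $n\ge 1/t$, so $\chi(\{F(f_n)(t)\}_n)=1$ for every $t>0$.
\item With $E=\mathbb{R}$ and $f_n=n^2\mathbf{1}_{[0,1/n]}$, the set $\{F(f_n)(t)\}_n$ is even unbounded.
\end{itemize}

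Absolute continuity of each $\int g_n$ separately does not help, because it is not uniform in $n$; the example exploits precisely this. The statement as printed has lost a hypothesis. The paper proves nothing itself and cites \cite[Thm.~4.2.2, Cor.~4.2.4]{KOZ}, where the sequence is assumed integrably bounded: $\|f_n(s)\|\le\nu(s)$ for a.e.\ $s$ and all $n$, with $\nu\in L^1(0,T)$. In the paper's only application (step (3) of the proof of Theorem~\ref{t:modificato}) this hypothesis holds, since $\|\mathcal{F}(q_n(s))\|_1\le 3cR$.

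The same defect reappears in your last step. Take increasing finite-dimensional subspaces $E_m$ with dense union and set $\phi_m(s)=\sup_n\mathrm{dist}(g_n(s),E_m)$. Then $\phi_m\downarrow\chi(\{g_n(s)\}_n)$. Since $\mathrm{dist}(\cdot,E_m)$ is a continuous seminorm, $\sup_n\mathrm{dist}\bigl(\int_0^t g_n,E_m\bigr)\le\int_0^t\phi_m$, and no projections or measurable nets are needed.

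To let $m\to\infty$, however, you need $\lim_m\int_0^t\phi_m\le\int_0^t\lim_m\phi_m$. This is the reverse of Fatou's inequality. It follows from dominated convergence once $\phi_m\le\sup_n\|g_n\|\le L\nu$, and it fails in the example above, where $\int_0^t\phi_m=+\infty$ for every $m$.

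Once integrable boundedness is added to the hypotheses, this argument gives the separable estimate with constant $L$. The rest of your plan then goes through: the reduction to a closed separable subspace $E_0$ with $\chi_{E_0}\le 2\chi_E$, together with $\chi(\{g_n(s)\}_n)\le Lq(s)$, yields the constant $2L$ in general.
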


In the sequel we will consider the following m.n.c.\ on the subsets of continuous function from $[0\,,T]$ to $X=L^1\left([0\,,\omega]\,,\mathbb{R}^3\right)$ (see \cite[Ex.~2.1.4]{KOZ}). For every bounded set $\Omega\subset\mathcal{C}([0\,,T]\,,X)$.
\begin{equation}\label{nu}
\nu(\Omega)=\max_{\{x_n\}_n\subset\Omega}\left( \sup_{t\in[0\,,T]}\e^{-Nt}\chi\left( \{x_n(t)\}_n\right)\,,\mc(\{x_n\}_n) \right)\in\mathbb{R}_+^2
\end{equation}
where $N\in\mathbb{R}$ is a fixed constant and the maximum is taken with respect to the ordering induced by the cone $\mathbb{R}_+^2$
and $\mc$ is the modulus of equicontinuity defined by
\begin{displaymath}
\mc(\Omega)=\lim_{\delta\to 0}\sup_{x\in\Omega}\max_{|t_1-t_2|<\delta}\|x(t_1)-x(t_2)\|.
\end{displaymath}
The m.n.c.\ $\nu$ is regular, that is $\nu(C)=0$ if and only if $C$ is a relatively compact subset of $\mathcal{C}([0\,,T]\,,X)$
\par

\begin{definition}\label{d:condensante}
Given a Banach space $E$, a set $Y\subset E$, a m.n.c.\ $\beta$, the multivalued mappings $F:Y\multimap E$ or the multivalued family $T:Y\times[0\,,1]\multimap E$ are called {\it condensing} with respect to $\beta$, {\it $\beta$-condensing} for short, if for every $\Omega\subseteq Y$
\begin{displaymath}
 \beta\left(F(\Omega)\right)\geq \beta(\Omega)
 \,\,\Rightarrow\,\,
 \Omega \text{ is relatively compact}
\end{displaymath}
or
\begin{displaymath}
 \beta\left(T(\Omega\,,[0\,,1])\right)\geq \beta(\Omega)
 \,\,\Rightarrow\,\,
 \Omega \text{ is relatively compact.}
\end{displaymath}
\end{definition}
\par
We conclude by recalling some regularity properties of a multimap $F: A \multimap B$, where $A$ and $B$ are topological spaces.
\begin{definition}\label{u.s.c.}
$F$ is upper semicontinuous {\rm(u.s.c.)} at a point $x\in A$ if for every open set $U\subseteq B$ such that $F(x)\subset U$, there exists a neighborhood $V(x)$ of $x$ with the property that $F(V(x)) \subset U$. $F$ is upper semicontinuous if it is u.s.c. at every point $x \in A$.
\end{definition}
\begin{definition}\label{comp.cont.}
$F$ is completely continuous  if it is upper semicontinuous and for every bounded set $\Omega\subset A$, $F(\Omega)$ is relatively compact.
\end{definition}
%%%%%%%%%%%%%%%%%%%%%%%%%%%%%%%%%%%%%%%%%%%%%%%%%%%%%%%%%%%%%%%%%%%%%%%%%%%%%%%%%%%%%%%%%%%%%%%%%%%%%%%%%%%%%%%%%%%%%%%%%%%%%%

\par

\medskip

%%%%%%%%%%%%%%%%%%%%%%%%%%%%%%%%%%%%%%%%%%%%%%%%%%%%PUNTI%FISSI%%%%%%%%%%%%%%%%%%%%%%%%%%%%%%%%%%%%%%%%%%%%%%%%%%%%%%%%%%%%%%%%%%

%%%%%%%%%%%%%%%%%%%%%%%%%%%%%%%%%%%%%%%%%%%%%%%%%%%%%%%%%%%%%%%%%%%%%%%%%%
% SEZIONE 3
%%%%%%%%%%%%%%%%%%%%%%%%%%%%%%%%%%%%%%%%%%%%%%%%%%%%%%%%%%%%%%%%%%%%%%%
\section{The relative topological degree for condensing multimaps}\label{s:degree}
In this section, we introduce the relative topological degree for condensing multivalued operators. 
The presentation is based on \cite[Chap.2 and 3]{KOZ} to which we refer for further details.
\par
This degree provides a tool for detecting the existence of fixed points (see Theorem \ref{t:fixed}), which correspond to solutions of suitable partial differential equations or systems—specifically, the systems \eqref{modello} and \eqref{modello nonlin} equipped with their initial and boundary conditions. As such, it constitutes a fundamental ingredient in our analysis of these systems. For potential generalizations of the present discussion, we present the degree here in its original multivalued formulation, although  in our applications it will be employed for single-valued operators.

\smallskip
In this part the symbol  $E$ always stands for an arbitrary Banach space,   $U\subset E$ is an open set and  $K\subseteq E$ is a convex, closed set with $U\cap K \ne \emptyset$. We also consider $U_K:=U\cap K$ with the topology induced by $K$ and denote with $\overline{U}_K$ and $\partial U_K$ respectively the closure and the boundary of $U_K$  in the relative topology of $K$.

\par
\smallskip
As it is known, to every completely continuous map $f \colon \partial U_K \to K$ (i.e. compact and continuous), such that $x\ne f(x)$ for every $x \in \partial U_K$ can be associated an integer defined as the relative  topological degree of $i-f$:
\[
\text{deg}_K(i-f, \partial U_K).
\]
 Here the symbol $i$ stands for the identity map.  This degree has several properties. In particular the \emph{normalization property}, i.e. when $f(x)= x_0$ for all $x \in \partial U_K$, then
\[
\text{deg}_K(i-f, \partial U_K)=\left\{ \begin{array}{rl} 1, &\text{if } x_0 \in  U_K\\
0, &\text{if } x_0 \not \in  U_K
\end{array}
\right.
\]
 
\noindent Let $F \colon X  \multimap E$ with $X\subseteq E$; we recall that $x_0\in X\subseteq E$ is a {\it fixed point} for the multimap $F \colon X\multimap E $ if $x_0\in F(x_0)$. In this case, we write $x_0\in\mathrm{Fix\,}F$. Notice that in the following, we always consider multimaps $F$ with compact and convex values.
We introduce the relative topological degree for a condensing multimap in two steps. First, by the presence of a \emph{single-valued homotopic approximation} (Theorem~\ref{t:single}) we extend this degree to a completely continuous multimap (see Definition~\ref{comp.cont.}) with compact values. Precisely
\begin{theorem} \label{t:single} \ Let $F \colon \partial U_K \multimap K$ be a completely continuous multimap with compact and convex values and $\mathrm{Fix\,}F\cap \partial U_K=\emptyset$. Then $F$ admits a single-valued homotopic approximation, i.e. there exists $f \colon  \partial U_K \to K$ and a completely continuous family $G \colon [0,1] \times \partial U_K \multimap K$ with compact values and $\mathrm{Fix\,}G(\lambda, \cdot)\cap \partial U_K=\emptyset$ for all $\lambda \in [0,1)$ such that $G(0, \cdot)=f$ and $G(1, \cdot)=F$.
\end{theorem}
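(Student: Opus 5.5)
The plan is a Cellina-type approximation, carried out inside the closed convex set $K$ and then glued to $F$ by a linear homotopy. Since $F$ is completely continuous and $\partial U_K$ is closed in $K$, the set $\overline{F(\partial U_K)}$ is compact. Moreover, $F$ is u.s.c. with compact values, so its graph is closed.

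First I show that $F$ has no fixed point on a small neighbourhood of the boundary in a uniform sense. Set
\[
\rho:=\inf\{\|x-y\|:\,x\in\partial U_K,\ y\in F(x)\}.
\]
I claim $\rho>0$. Suppose instead that $x_n\in\partial U_K$, $y_n\in F(x_n)$ and $\|x_n-y_n\|\to0$. Compactness of $\overline{F(\partial U_K)}$ gives a subsequence $y_n\to y$, hence $x_n\to y\in\partial U_K$. The closed graph then yields $y\in F(y)$, which contradicts $\mathrm{Fix\,}F\cap\partial U_K=\emptyset$.

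Next I construct $f$. For each $\varepsilon\in(0,\rho/2)$, the standard approximation theorem for u.s.c. maps with convex compact values on metric spaces (Cellina) gives a continuous $f_\varepsilon$ whose graph lies within $\varepsilon$ of the graph of $F$. I would build it explicitly as follows. Take a locally finite partition of unity $\{\varphi_j\}$ subordinate to a cover of $\partial U_K$ by balls $B(x_j,\delta_j)$, with $\delta_j<\varepsilon$ chosen by u.s.c. so that $F(B(x_j,2\delta_j))\subset F(x_j)+B_\varepsilon$. Pick $y_j\in F(x_j)$ and set
\[
f(x)=\sum_j\varphi_j(x)\,y_j .
\]
Each $y_j\in K$ and $K$ is convex, so $f$ takes values in $K$. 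Its range lies in the convex hull of $F(\partial U_K)$, whose closure is compact by Mazur's theorem, so $f$ is completely continuous.

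Then I define the homotopy
\[
G(\lambda,x)=(1-\lambda)f(x)+\lambda F(x).
\]
Its values are compact, convex, and contained in $K$. It is u.s.c. because addition and scalar multiplication preserve upper semicontinuity for compact-valued maps, and its range lies in $\overline{\mathrm{co}}\,F(\partial U_K)$, which is compact. Hence $G$ is a completely continuous family with $G(0,\cdot)=f$ and $G(1,\cdot)=F$.

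The remaining step, and the main obstacle, is to show that $G(\lambda,\cdot)$ has no fixed points on $\partial U_K$. This is exactly what the $\varepsilon$-graph approximation is designed to control. For $x\in\partial U_K$, $\varphi_j(x)\ne0$ forces $\|x-x_j\|<\delta_j<\varepsilon$. By convexity of $F(x_j)$ and the choice of $\delta_j$, one gets $f(x)\in \mathrm{co}\bigl(F(B(x,\varepsilon))\bigr)+B_\varepsilon$, so $f$ is close to $F$ in a convexified sense. The difficulty is that convexity of the values does not directly keep $x$ away from $\mathrm{co}\,F(B(x,\varepsilon))$. To handle this I will shrink $\varepsilon$ and argue by contradiction. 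Suppose there are $\varepsilon_n\to0$, $\lambda_n$ and $x_n\in\partial U_K$ with $x_n\in G_n(\lambda_n,x_n)$. Compactness gives convergence along a subsequence, $x_n\to x$ and $\lambda_n\to\lambda$. Here I need the property that the $f_n(x_n)$ accumulate only in $F(x)$, not merely in its convex hull over nearby points. Once that is established, convexity of $F(x)$ and the closed graph give $x\in F(x)$, contradicting the hypothesis.

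To secure that property I use the sharper choice of $\delta_j$ that makes
\[
F(B(x_j,2\delta_j))\subset F(x_j)+B_\varepsilon
\]
with the $\varepsilon$-neighbourhood taken around each single point's value. Choosing the balls with $2\delta_j$ chains the estimate: if $x\in B(x_j,\delta_j)\cap B(x_k,\delta_k)$ and, say, $\delta_j\ge\delta_k$, then $x_k\in B(x_j,2\delta_j)$, so $y_k\in F(x_j)+B_\varepsilon$. Taking $j^*$ to be the index with largest $\delta_j$ among those with $\varphi_j(x)\ne0$, every active $y_k$ lies in the convex set $F(x_{j^*})+B_\varepsilon$, and hence so does $f(x)$, with $\|x-x_{j^*}\|<\varepsilon$. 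With this bound the limit argument closes. I expect this bookkeeping, obtaining a genuine graph approximation rather than a convex-hull one, to be the delicate point.
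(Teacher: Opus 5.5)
The paper gives no proof of this statement; it quotes it from \cite{KOZ}. Your proposal is correct. It is the standard argument for convex-valued maps: a Cellina-type partition-of-unity approximation $f$ with values in $K$, joined to $F$ by the linear homotopy $(1-\lambda)f+\lambda F$. Choosing the index $j^*$ with the largest $\delta_j$ among the active ones is what turns the convex-hull estimate into a genuine localization at a single point $x_{j^*}$.

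\textbf{Justification of compactness.} Compactness of $\overline{F(\partial U_K)}$ does not follow from $\partial U_K$ being closed in $K$. It follows either from boundedness of $\partial U_K$ (true in the application, where $U=Q$ is a ball) or from reading ``completely continuous'' as ``compact'', as the paper does at the beginning of Section~\ref{s:degree}. Under the paper's Definition~\ref{comp.cont.} with an unbounded $\partial U_K$, your $\rho$ could be $0$.

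\textbf{The limiting argument is unnecessary.} Since $x\in B(x_{j^*},\delta_{j^*})$, your choice of $\delta_{j^*}$ also gives $F(x)\subset F(x_{j^*})+B_\varepsilon$. Hence all of $G(\lambda,x)$ lies in the convex set $F(x_{j^*})+B_\varepsilon$. A fixed point $x\in G(\lambda,x)$ would then produce $y\in F(x_{j^*})$ with $\|x_{j^*}-y\|<\delta_{j^*}+\varepsilon<2\varepsilon<\rho$. This contradicts the definition of $\rho$ for every $\lambda\in[0,1]$ at once, so the sequence $\varepsilon_n\to0$ is not needed.

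\textbf{Removing the uniform bound.} You can take the radius locally instead: $\varepsilon_x=\frac14\,\mathrm{dist}(x,F(x))>0$ and $\delta(x)<\min\{\varepsilon_x,1\}$. The same chaining then gives, at a fixed point, $\mathrm{dist}(x_{j^*},F(x_{j^*}))<2\varepsilon_{x_{j^*}}=\frac12\,\mathrm{dist}(x_{j^*},F(x_{j^*}))$, which is absurd. This removes the need for a uniform $\rho>0$. The cap $\delta(x)<1$ keeps the active centers for a bounded set inside a bounded set, so $f$ and $G$ remain completely continuous by Mazur's theorem. The unbounded case is therefore covered too.
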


\begin{definition}\label{d:degCCM} \ Let $F \colon \partial U_K \multimap K$ be a completely continuous multimap with compact values and  $\mathrm{Fix\,}F=\emptyset$. The relative topological degree
\[
\text{deg}_K(i-F, \partial U_K):=\text{deg}_K(i-f, \partial U_K)
\]
where $f$ is an arbitrary single-valued homotopic approximation of $F$.
\end{definition}

\begin{remark} The relative topological degree  in Definition \ref{d:degCCM} is well defined since it is possible to show that two single-valued homotopic approximations of the same completely continuous multifield $F$ as in Definition \ref{d:degCCM} have the same relative topological degree.
\end{remark}

\par\smallskip
Now we are ready to introduce the relative topological degree of a condensing multimap based on the existence of a \textit{compact homotopy approximation} (see Lemma~\ref{l:homotoly}).  We denote by $\beta$ a m.n.c. in $E$ which is monotone and invariant under union of compact sets. 
\begin{definition} \ Let $F_0, F_1 \colon \partial U_K \multimap K$ be u.s.c. $\beta$-condensing multimaps with compact and convex values.   $F_0$ and $F_1$ are said to be $\beta$-homotopic if there exists an u.s.c. $\beta$-condensing family (see Definition~\ref{d:condensante}) $G \colon [0,1] \times \partial U_K \multimap K$ with compact and convex values such that $\mathrm{Fix\,}G(\lambda, \cdot)=\emptyset$ for all $\lambda \in [0,1], \, G(0, \cdot)=F_0$ and $G(1, \cdot)=F_1$
\end{definition}
\begin{lemma}\label{l:homotoly} \ The class of u.s.c. multimaps $\beta$-homotopic to a given $\beta$-condensing multimap $F\colon \partial U_K \multimap K$ with compact and convex values and $\mathrm{Fix\,}F=\emptyset$ contains a completely continuous representative $\hat F$. The multimap $\hat{F}$ is said a compact homotopy approximation of $F$.
\end{lemma}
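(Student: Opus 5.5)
The plan is the classical fundamental-set construction of Kamenskii--Obukhovskii--Zecca. The aim is a convex closed set $M\subseteq K$ that $F$ maps into itself and that contains $F(\partial U_K)$, and then to replace $F$ by its composition with a retraction onto $M$. First, take $\mathcal{T}$ to be the family of all closed convex sets $M\subseteq K$ with $M\neq\emptyset$ and $F(\partial U_K\cap M)\subseteq M$. The family $\mathcal T$ is nonempty, since $K\in\mathcal T$. Fix any $x_*\in K$ and let
\[
M_0=\bigcap\{M\in\mathcal T:\ x_*\in M\}.
\]
Then $M_0$ is closed and convex, belongs to $\mathcal T$, and satisfies
\[
M_0=\overline{\mathrm{co}}\bigl(F(\partial U_K\cap M_0)\cup\{x_*\}\bigr).
\]
Call this identity $(\ast)$. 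The inclusion ``$\supseteq$'' holds because $M_0$ is closed, convex and invariant. The inclusion ``$\subseteq$'' holds because the right-hand side is itself in $\mathcal T$ and contains $x_*$, so minimality of $M_0$ gives it.

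Second, I would show that $\partial U_K\cap M_0$ is relatively compact. Applying $\beta$ to $(\ast)$, and using that $\beta$ is monotone, invariant under adjoining a compact set, and (as I will assume, in line with the setting of \cite{KOZ}) invariant under passing to the closed convex hull, gives
\[
\beta(\partial U_K\cap M_0)\le\beta(M_0)=\beta\bigl(F(\partial U_K\cap M_0)\bigr).
\]
Since $F$ is $\beta$-condensing, $\partial U_K\cap M_0$ is relatively compact. Because $F$ is u.s.c. with compact values, the image $F(\overline{\partial U_K\cap M_0})$ is then compact. Hence, by $(\ast)$ and Mazur's theorem, $M_0$ is compact whenever $\partial U_K\cap M_0\neq\emptyset$. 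This invariance step, namely that $\beta$ is unchanged by closed convex hulls and by adding a point, is the main obstacle: the lemma assumes only monotonicity and invariance under union with compact sets. If convex-hull invariance is not available, I would restrict to the m.n.c.'s actually used (the Hausdorff measure $\chi$ and $\nu$), which do have this property, or to a compact-valued variant of the construction.

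Third, I would define the approximation $\hat F$. By Dugundji's extension theorem there is a continuous retraction $\rho\colon K\to M_0$, since $M_0$ is closed and convex. Set
\[
\hat F(x)=F(\rho(x)) \ \text{ if } \rho(x)\in\partial U_K,
\]
or more safely $\hat F=F\circ\rho$ on $\partial U_K$, after arranging that $\rho$ maps $\partial U_K$ into $\partial U_K\cap M_0$. Then $\hat F$ is u.s.c. with compact convex values, and its range lies in the compact set $F(\overline{\partial U_K\cap M_0})$, so $\hat F$ is completely continuous. The homotopy is
\[
G(\lambda,x)=F\bigl((1-\lambda)\rho(x)+\lambda x\bigr)
\]
composed suitably, or alternatively $G(\lambda,x)=\lambda F(x)+(1-\lambda)\hat F(x)$, which is convex in the values. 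In either case $G$ is u.s.c. with compact convex values. It is $\beta$-condensing because
\[
G(\Omega\times[0,1])\subseteq\mathrm{co}\bigl(F(\Omega)\cup\hat F(\Omega)\bigr),
\]
and $\hat F(\Omega)$ is relatively compact.

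Finally, I would verify that $G$ has no fixed points. Suppose $x\in G(\lambda,x)$. Since $x\in\partial U_K$ and both $F(x)$ and $\hat F(x)$ meet $M_0$ only in an invariant way, one shows $x\in M_0$. On $M_0$ the retraction satisfies $\rho(x)=x$, so $\hat F(x)=F(x)$ and $x\in F(x)$, which contradicts $\mathrm{Fix}\,F=\emptyset$. The delicate point here is proving $x\in M_0$. For that I would use that $M_0$ is convex and contains both $F(\partial U_K\cap M_0)$ and the range of $\hat F$, and would adjust the construction, replacing $\partial U_K$ by $\partial U_K\cap M_0$ where needed, so that fixed points are forced into $M_0$.
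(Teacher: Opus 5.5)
\textbf{There is a genuine gap in the last step.} You are following the Kamenskii--Obukhovskii--Zecca route that the paper defers to. However, your family $\mathcal T$ lacks the defining property of a \emph{fundamental} set, and without it the final step is false, not merely delicate. In \cite{KOZ} a closed convex $M$ is fundamental for $F$ if it satisfies $F(\partial U_K\cap M)\subseteq M$ and also
\[
x\in\partial U_K,\quad x\in\overline{\mathrm{co}}\bigl(F(x)\cup M\bigr)\ \Longrightarrow\ x\in M .
\]
Invariance alone does not suffice. Take $E=K=\mathbb{R}$, $U=(-1,1)$, $F(1)=\{2\}$, $F(-1)=\{0\}$ and $x_*=0$. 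The set $\{0\}$ is closed, convex and vacuously invariant, so your $M_0=\{0\}$, and every $\hat F$ with values in $M_0$ is $\equiv 0$. Then $1\in\tfrac12 F(1)+\tfrac12\hat F(1)=G(\tfrac12,1)$. In fact $\deg(i-\hat F)=1$ while $\deg(i-F)=0$, so no fixed-point-free homotopy between them exists.

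\textbf{How to repair it.} Add the displayed condition to $\mathcal T$; the rest of your argument then goes through.
An intersection of fundamental sets is fundamental. The set $M_1=\overline{\mathrm{co}}(F(\partial U_K\cap M_0)\cup\{x_*\})$ is again fundamental: if $x\in\overline{\mathrm{co}}(F(x)\cup M_1)$, then $x\in M_0$ because $M_1\subseteq M_0$, hence $F(x)\subseteq M_1$ and so $x\in M_1$. Thus $(\ast)$ and your compactness argument are unchanged. Now take a fixed point $x\in\lambda F(x)+(1-\lambda)\hat F(x)\subseteq\overline{\mathrm{co}}(F(x)\cup M_0)$. It is forced into $M_0$, where $\hat F(x)=F(x)$. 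Convexity of $F(x)$ then gives $x\in F(x)$, a contradiction.

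\textbf{The definition of $\hat F$ also fails.} The map $\hat F=F\circ\rho$ is not well defined, since $\rho(x)\in M_0$ need not lie in $\partial U_K$. In the example above, $\partial U_K\cap M_0$ is even empty, so one cannot in general arrange $\rho(\partial U_K)\subseteq\partial U_K\cap M_0$. What you need is the extension theorem for u.s.c.\ multimaps with compact convex values, the multivalued Dugundji/Ma theorem used in \cite{KOZ}. Since $\partial U_K\cap M_0$ is closed, $F|_{\partial U_K\cap M_0}$ extends to a u.s.c.\ $\hat F\colon\partial U_K\multimap M_0$ with compact convex values. If the intersection is empty, $M_0=\{x_*\}$ and you take $\hat F\equiv\{x_*\}$. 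Compactness of $M_0$ then makes $\hat F$ completely continuous.

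\textbf{The convex-hull issue you flagged is not an obstacle.} In \cite{KOZ} it is built into the definition of a measure of noncompactness, $\beta(\overline{\mathrm{co}}\,\Omega)=\beta(\Omega)$. Together with invariance under adjoining compact sets, it gives both the inequality you derive from $(\ast)$ and the condensing property of $G(\lambda,x)=\lambda F(x)+(1-\lambda)\hat F(x)$.
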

The proof of this result involves the notion of a completely fundamentally restrictible multimap, which we omit here for brevity.
\begin{definition} \label{d:degree} \ Let $F \colon \partial U_K \multimap K$ be a  u.s.c. $\beta$-condensing multimap with compact and convex values and $\mathrm{Fix\,}F=\emptyset$. Then
\[
deg_K(i-F, \partial U_K)=deg_K(i-\hat{F}, \partial U_K)
\]
where $\hat{F}$ is an arbitrary compact homotopy approximation of $F$.
\end{definition} 
\begin{remark} \ Again Definition~\ref{d:degree} is well posed since it is possible to show that two homotopy approximation of the same multimap $F$ as in Lemma \ref{l:homotoly} have the same relative degree. 
\end{remark}

\par \noindent
This relative degree is used in the investigation of fixed points, as shown in the following result.
\begin{theorem} \label{t:fixed} \ Let $F \colon \overline{U}_K \multimap K$ be a $\beta$- condensing multimap with compact and convex values. Assume that $\mathrm{Fix\,}F \cap \partial U_K=\emptyset$. If $\text{deg}_K(i-F, \partial U_K )\ne \emptyset$, then $\emptyset \ne \mathrm{Fix\,}F\subset U_K$.
\end{theorem}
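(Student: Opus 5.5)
The statement to prove is Theorem~\ref{t:fixed}. The plan is to argue by contraposition, reducing step by step to the completely continuous single-valued case, where the classical existence property of the relative degree (Leray--Schauder type degree on the closed convex set $K$) applies. Throughout, the nonvanishing degree hypothesis is read as $\deg_K(i-F,\partial U_K)\neq 0$.

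First I note that $\mathrm{Fix\,}F\subset U_K$ is automatic: every fixed point in $\overline U_K$ lies either in $\partial U_K$ or in $U_K$, and the boundary is excluded by hypothesis. So only $\mathrm{Fix\,}F\neq\emptyset$ needs proof, and I will show that $\mathrm{Fix\,}F=\emptyset$ on $\overline U_K$ forces the degree to vanish.

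Assume $F$ has no fixed points in $\overline U_K$. Since $F$ is $\beta$-condensing, I will construct a fundamental set, namely a closed convex $K_0\subset K$ with $F(\overline U_K\cap K_0)\subset K_0$ and $\overline{\mathrm{co}}(F(\overline U_K\cap K_0)\cup\{x_0\})\subset K_0$. I define $K_0$ as the intersection of all such sets. By construction $K_0=\overline{\mathrm{co}}(F(\overline U_K\cap K_0)\cup\{x_0\})$, so $\beta(\overline U_K\cap K_0)\le \beta(K_0)=\beta(F(\overline U_K\cap K_0))$. The condensing property then makes $\overline U_K\cap K_0$ relatively compact, and hence $F$ restricted to it is compact. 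Next I take a Dugundji retraction $\rho\colon K\to K_0$ and set $\hat F=F\circ\rho$ on $\overline U_K$. This multimap is completely continuous, has compact convex values, and satisfies $\mathrm{Fix\,}\hat F=\mathrm{Fix\,}F=\emptyset$, because $\hat F$ takes values in $K_0$, where $\rho$ is the identity. The linear homotopy $G(\lambda,x)=F(\lambda x+(1-\lambda)\rho(x))$, or more carefully the one produced by Lemma~\ref{l:homotoly}, connects $F$ and $\hat F$ on $\partial U_K$ without fixed points and remains $\beta$-condensing. Therefore $\hat F$ is a compact homotopy approximation, and $\deg_K(i-F,\partial U_K)=\deg_K(i-\hat F,\partial U_K)$.

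Now, by Theorem~\ref{t:single} applied on all of $\overline U_K$, I will produce a single-valued completely continuous $f$ that is homotopic to $\hat F$ through fixed-point-free completely continuous families. I use $\varepsilon$-approximations (Cellina) on the compact set $\overline{\hat F(\overline U_K)}$: since $i-\hat F$ is bounded away from $0$ on $\overline U_K$, any sufficiently close graph approximation is fixed-point free on $\overline U_K$. Then $f$ has no fixed points in $\overline U_K$, and the existence property of the classical relative degree for compact single-valued maps gives $\deg_K(i-f,\partial U_K)=0$. Chaining the equalities, $\deg_K(i-F,\partial U_K)=0$, which is a contradiction.

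The main obstacle is the homotopy step. The fundamental set, the approximation and the homotopy have to be built on the whole of $\overline U_K$, not only on $\partial U_K$, and the condensing property must be preserved along the homotopy. I would handle this by checking directly that $G(\Omega\times[0,1])\subset F(\overline U_K\cap \overline{\mathrm{co}}(\Omega\cup K_0))$, so that $\beta$ is not increased, and then invoking the well-posedness remarks following Definitions~\ref{d:degCCM} and~\ref{d:degree}.
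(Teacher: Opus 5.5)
The paper gives no proof of this theorem; it imports it from the degree theory of \cite{KOZ}, where it follows from the construction of the degree via fundamental sets. Your overall plan matches that construction: contraposition, a compact core for $F$, a compact approximation that is fixed-point free on all of $\overline U_K$, and then the existence property of the classical relative degree. The boundary exclusion, the relative compactness of $\overline U_K\cap K_0$, and the Cellina step are also fine.

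The gap is in the construction of $\hat F$ and of the connecting homotopy.

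First, $F$ is only defined on $\overline U_K$. But $\rho(x)\in K_0$ and $\lambda x+(1-\lambda)\rho(x)\in K$ generally lie outside $\overline U_K$: $K_0$ contains $x_0$ and $F(\overline U_K\cap K_0)$, and $\overline U_K$ is not convex. So $F\circ\rho$ and your homotopy are undefined. For the same reason, the inclusion $F(\rho(x))\subset K_0$, which you use to get $\mathrm{Fix\,}\hat F=\emptyset$, is not available.

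Second, and more seriously, even if $F$ were defined on all of $K$, nothing prevents your homotopy from acquiring fixed points on $\partial U_K$. A minimal closed convex invariant set is not enough. Take $E=K=\mathbb R$, $U=(-2,2)$, $F(x)=x-1$ and $x_0=10$. Your construction gives $K_0=\{10\}$, and $x\in F(\lambda x+(1-\lambda)10)$ holds for $x=10-1/(1-\lambda)$, which equals $2$ at $\lambda=7/8$.

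Falling back on Lemma~\ref{l:homotoly} does not help. It yields a compact representative on $\partial U_K$ only, with no information about a fixed-point-free compact extension to $\overline U_K$. Producing such an extension is exactly what the theorem requires.

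The missing ingredient is the second defining property of a fundamental set. Besides $F(\overline U_K\cap K_0)\subset K_0$, require that $x\in\overline U_K$ and $x\in\overline{\mathrm{co}}(F(x)\cup K_0)$ imply $x\in K_0$.
\begin{itemize}
\item The class of closed convex sets containing $x_0$ with both properties contains $K$ and is closed under intersection.
\item Its minimal element still satisfies $K_0=\overline{\mathrm{co}}(F(\overline U_K\cap K_0)\cup\{x_0\})$. Hence your condensing argument makes $\overline U_K\cap K_0$ compact, and by Mazur's theorem $K_0$ is compact.
\item Replace the retraction by an extension. Let $\tilde F\colon\overline U_K\multimap K_0$ be an u.s.c.\ extension of $F|_{\overline U_K\cap K_0}$ with compact convex values. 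It exists by a Dugundji-type construction, since $K_0$ is compact and convex; take a constant if $\overline U_K\cap K_0=\emptyset$.
\item Use the homotopy $G(\lambda,x)=\lambda F(x)+(1-\lambda)\tilde F(x)$. Any fixed point lies in $\overline{\mathrm{co}}(F(x)\cup K_0)$, hence in $K_0$. There $\tilde F=F$, so $x\in F(x)$, which is excluded. In particular $\tilde F$ has no fixed points on $\overline U_K$.
\item $G$ is $\beta$-condensing, because $G(\Omega\times[0,1])\subset\overline{\mathrm{co}}(F(\Omega)\cup K_0)$, and the measure of this set equals $\beta(F(\Omega))$ since $K_0$ is compact.
\end{itemize}
With this $\tilde F$ in place of your $\hat F$, the rest of your argument goes through.
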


\par\smallskip
It is clear from the previous discussion that the search for a fixed point of a u.s.c. $\beta$- condensing multimap $F$ is highly simplified when $F$ admits a constant single-valued  homotopic approximation $f(x)\equiv x_0 \in E$. In fact, in this case, this investigation essentially consists in checking where $x_0$ is located with respect to the domain of $F$. The following result details this situation. We reduce to this case also in the study of problem \eqref{modello} (see Theorem \ref{t:SIRS}) where $E=\mathcal{C}([a\,,b]\,,X)$. 
\begin{theorem}
\label{punto fisso condensante}
Let $\T:\overline{U}_K\times[0\,,1]\multimap K$ be such that:
\begin{itemize}
\item[(1)] $\T(q\,,\lambda)$ is compact and convex, for every $q\in \overline{U}_K$ and $\lambda \in[0\,,1]$;
\item[(2)] $\T$ is u.s.c.;
\item[(3)] $\T$ is $\beta-$condensing, where $\beta$ is a nonsingular, monotonic m.n.c.\ in $E$;
\item[(4)] $\mathrm{Fix\,}\T(\cdot\,,\lambda)\cap\partial U_K=\emptyset$, for every $\lambda \in(0\,,1)$;
\item[(5)] $\T(\cdot\,,0)\equiv\{x_0\}$, $x_0\in U_K$.
\end{itemize}
Then there exists $x\in \overline{U}_K$ such that $x\in\T(x\,,1)$, i.e. $\mathrm{Fix\,}\T(\cdot\,,1) \ne \emptyset$.
\end{theorem}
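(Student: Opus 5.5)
We argue by a two-case split on whether $\T(\cdot\,,1)$ already has a fixed point on $\partial U_K$. If some $x\in\partial U_K$ satisfies $x\in\T(x\,,1)$, we are done, since $\partial U_K\subset\overline{U}_K$. So assume from now on that $\mathrm{Fix\,}\T(\cdot\,,1)\cap\partial U_K=\emptyset$.

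Next we check that no fixed point on the boundary occurs for any $\lambda\in[0\,,1]$. For $\lambda\in(0\,,1)$ this is hypothesis (4). For $\lambda=0$, hypothesis (5) gives $\T(x\,,0)=\{x_0\}$; a fixed point $x$ would satisfy $x=x_0\in U_K$, and $U_K$ is relatively open in $K$, hence disjoint from $\partial U_K$. For $\lambda=1$ it is our standing assumption.

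With this in hand, we restrict $\T$ to $[0\,,1]\times\partial U_K$. By (1)--(3), this restriction is a u.s.c.\ $\beta$-condensing family with compact convex values and no fixed points on $\partial U_K$ for every $\lambda$. Therefore $F_0=\T(\cdot\,,0)$ and $F_1=\T(\cdot\,,1)$ are $\beta$-homotopic.

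Each $F_\lambda=\T(\cdot\,,\lambda)$ is itself $\beta$-condensing: for $\Omega\subseteq\partial U_K$ we have $F_\lambda(\Omega)\subseteq\T(\Omega\,,[0\,,1])$, so monotonicity of $\beta$ gives $\beta(F_\lambda(\Omega))\geq\beta(\Omega)\Rightarrow\beta(\T(\Omega,[0,1]))\geq\beta(\Omega)\Rightarrow\Omega$ relatively compact. Hence both degrees $\deg_K(i-F_0,\partial U_K)$ and $\deg_K(i-F_1,\partial U_K)$ are defined by Definition~\ref{d:degree}.

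We now show the two degrees coincide. Let $\hat F_0$ be a compact homotopy approximation of $F_0$, given by Lemma~\ref{l:homotoly}. Since $\beta$-homotopy is an equivalence relation (homotopies can be concatenated and reparametrized, and the condensing property is preserved by taking unions), $\hat F_0$ is also $\beta$-homotopic to $F_1$, so it serves as a compact homotopy approximation of $F_1$ as well. By the well-posedness remark following Definition~\ref{d:degree},
\[
\deg_K(i-F_1,\partial U_K)=\deg_K(i-\hat F_0,\partial U_K)=\deg_K(i-F_0,\partial U_K).
\]

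It remains to compute $\deg_K(i-F_0,\partial U_K)$. The map $F_0\equiv x_0$ is constant, so it is completely continuous and is its own compact homotopy approximation and its own single-valued approximation. The normalization property, with $x_0\in U_K$, therefore gives degree $1$. Hence $\deg_K(i-F_1,\partial U_K)=1\neq0$.

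Finally, $F_1=\T(\cdot\,,1)$ is defined on $\overline{U}_K$ and is $\beta$-condensing there by the same inclusion argument as above. Theorem~\ref{t:fixed} then yields $\emptyset\neq\mathrm{Fix\,}\T(\cdot\,,1)\subset U_K$, which completes the argument.

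The main obstacle is the homotopy-invariance step. The paper only asserts well-posedness of the degree; it never states homotopy invariance explicitly. I would justify it by the transitivity of $\beta$-homotopy, which amounts to gluing two condensing families on $[0\,,1/2]$ and $[1/2\,,1]$ and checking that the glued family is still condensing, using $\beta(A\cup B)=\max\{\beta(A),\beta(B)\}$. Alternatively, I would cite the homotopy property of this degree directly from \cite{KOZ}.
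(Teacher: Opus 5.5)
Your proof is correct and follows the same route as the paper: the case split on boundary fixed points of $\T(\cdot\,,1)$, the $\beta$-homotopy given by $\T$ itself, normalization for the constant map $\T(\cdot\,,0)\equiv x_0$, and Theorem~\ref{t:fixed}. The transitivity step you single out as the main obstacle is not needed, since $\T(\cdot\,,0)$ is already completely continuous and $\beta$-homotopic to $\T(\cdot\,,1)$, so it is itself a compact homotopy approximation of $\T(\cdot\,,1)$; this is exactly the paper's one-line step, and it also avoids the union property $\beta(A\cup B)=\max\{\beta(A),\beta(B)\}$, which is not among the stated hypotheses on $\beta$.
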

\begin{proof}
If $\mathrm{Fix\, }\T(\cdot\,,1)\cap\partial U_K\neq\emptyset$, then the proof is complete. Otherwise, by (4) and (5), we obtain that
\begin{equation}
\label{FixT}
\mathrm{Fix\,}\T(\cdot\,,\lambda)\cap\partial U_K=\emptyset,
\quad 
\text{for every $\lambda \in[0\,,1]$.}
\end{equation}
The map $\T(\cdot\,,0)$ is completely continuous with $x_0 \in U_K$ and by the normalization property
\[
\deg_K\left(i-\T(\cdot\,,0)\,,\partial U_K\right)=1.
\] 
Moreover $\T(\cdot\,,0)$ is a homotopic approximation of $\T(\cdot\,,1)$ and so 
 \[
 \deg_K\left(i-\T(\cdot\,,1)\,,\partial U_K\right)=
\deg_K\left(i-\T(\cdot\,,0)\,,\partial U_K\right)=1.
\]
Hence,  by Theorem~\ref{t:fixed}, we conclude
\[
\emptyset\neq\mathrm{Fix\, }\T(\cdot\,,1)\subset U_K.
\]
\end{proof}

%%%%%%%%%%%%%%%%%%%%%%%%%%%%%%%%%%%%%%%%%%%%%%%%%%%%%%%%%%%%%%%%%%%%%%%%%%%%%%%%%%%%%%%%%%%%%%
%            SEZIONE 4             %
%%%%%%%%%%%%%%%%%%%%%%%%%%%%%%%%%%%%%%%%%%%%%%%%%%%%%%%%%%%%%%%%%%%%%%%%%%%%%%%%%%%%%%%%%%%%%%

\section{Existence of a unique nonnegative solution for the SIRS model}
\label{s:existence}
This section is devoted to the study of the classical SIRS model \eqref{modello} equipped with the initial and boundary condition \eqref{e:IC} and \eqref{BC} (see Theorem~\eqref{t:SIRS}). We suppose  (see Section \ref{s:notations}) that
\begin{equation}\label{eq:x0}
x_0:=(s_0, i_0, r_0) \in X^+
\end{equation}

\par
We assume the following standard regularity conditions:
\begin{itemize}
\item[(H1)] \, $\beta, \gamma,\delta \in L^\infty(0\,,\omega)$,
    \[
    \essinf_{[0\,,\omega]}\beta=\beta_0>0,
    \quad
    \essinf_{[0\,,\omega]}\gamma=\gamma_0>0,
    \quad
    \essinf_{[0\,,\omega]}\delta=\delta_0>0;
    \]
\item[(H2)] \,  $\mu \in L^1_{\mathrm loc}(0\,,\omega)$,
    \[
    \essinf_{[0\,,\omega]}\mu=\mu_0>0;
    \]  
 \item[(H3)] \, $k\in L^\infty\left([0\,,\omega]^2\right)$.  
    \[
    \essinf_{[0\,,\omega]\times[0\,,\omega]}k\geq 0.
    \]  
\end{itemize}
\par\smallskip

%In abstract formulation system \eqref{modello} becomes the Cauchy problem
%\begin{equation}
%\tag{$\mathcal{P}$}
 %   \label{astratta1}
%    \left\{
 %   \begin{split}
  %    & x'(t)=Ax(t)+f(x(t)) \\
  %    & x(0)=x_0
   % \end{split}
  %  \right.
%\end{equation}
 %with $t\in[0\,,T]$, $x:[0\,,T]\to L^1\left([0\,,\omega]\,,\mathbb{R}^3\right)$,
 %$x_0=(s_0\,,i_0\,,r_0)$ and we will show in the following how $A$ and $f$ are defined.
 
As stated in Section \ref{s:introduction}, in its abstract formulation problem \eqref{modello}–\eqref{e:IC}–\eqref{BC} becomes the Cauchy problem \eqref{eq:2:CP}. We now show how $A$ and $f$ are defined.
 
 \par
 The linear operator $A$ is obtained by adding three terms $A=A_1+A_2+A_3$, $A_i:D(A_i)\to L^1\left([0\,,\omega]\,,\mathbb{R}^3\right)$, defined as follows:
\begin{itemize}
    \item $A_1$ is the (distributional) derivation  with respect to $a$:
    \[
       A_1\psi=-\psi', \quad D(A_1)=W^{1,1}\left([0\,,\omega]\,,\mathbb{R}^3\right);
    \]
    \item $A_2$ is the contribution of the death rate:
     \[
       A_2\psi=-\mu\psi, \quad D(A_2)=\left\{\psi\in L^{1}\left([0\,,\omega]\,,\mathbb{R}^3\right):\,\mu\psi\in L^{1}\left([0\,,\omega]\,,\mathbb{R}^3\right)\right\};
    \]
    \item $A_3$ is the contribution of the removal and the recovery rates
    \[
       A_3\psi=\mathcal{G}\psi, \quad D(A_3)=L^1\left([0\,,\omega]\,,\mathbb{R}^3\right)
    \]
    where
    \[
    \mathcal{G}=\left( \begin{array}{ccc}
       0 & \delta           & 0 \\
       0 & -(\delta+\gamma) & 0 \\
       0 & \gamma           & 0
     \end{array} \right).
    \]
\end{itemize}
The domain of $A$ is given by the functions $\psi\in D(A_1)\cap D(A_2)\cap D(A_3)$ satisfying the nonlocal boundary conditions \eqref{BC}, that is $\psi(0)=\mathcal{B}\psi$, where $\mathcal{B}:L^{1}\left([0\,,\omega]\,,\mathbb{R}^3\right)\to L^{1}\left([0\,,\omega]\,,\mathbb{R}^3\right)$ is defined by
\[
\mathcal{B}\psi=\int_0^\omega B(a)\psi(a)\,da,\quad 
B=\left( \begin{array}{ccc}
       \beta & (1-p)\beta & (1-q)\beta \\
       0     & p\beta     & 0 \\
       0     & 0          & q\beta
     \end{array} \right).
\]
Therefore
\[
 D(A)=\left\{\psi\in W^{1,1}\left([0\,,\omega]\,,\mathbb{R}^3\right):\,\mu\psi\in L^{1}\left([0\,,\omega]\,,\mathbb{R}^3\right)\,\text{ and }\,\psi(0)=\mathcal{B}(\psi).
 \right\}
\]
\begin{proposition}\label{p:dense} The set $D(A)$ is dense in $L^{1}\left([0\,,\omega]\,,\mathbb{R}^3\right)$.
\end{proposition}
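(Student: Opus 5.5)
The plan is to approximate an arbitrary $\psi\in L^1([0,\omega],\mathbb{R}^3)$ in two stages: first by smooth functions vanishing near both endpoints, and then by a small correction near $a=0$ that enforces $\psi(0)=\mathcal{B}\psi$.

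\emph{Step 1: reduce to $C_c^\infty$.} Since $C_c^\infty((0,\omega),\mathbb{R}^3)$ is dense in $L^1$, fix $\varepsilon>0$ and choose $\phi\in C_c^\infty((0,\omega),\mathbb{R}^3)$ with $\|\psi-\phi\|_1<\varepsilon$. Let $\mathrm{supp}\,\phi\subset[\eta,\omega-\eta]$ for some $\eta>0$. Then $\phi\in W^{1,1}$. Since $\mu\in L^1_{\mathrm{loc}}(0,\omega)$, $\mu$ is integrable on $[\eta,\omega-\eta]$, so $\mu\phi\in L^1$. The only remaining defect is the boundary condition: $\phi(0)=0$, while $\mathcal{B}\phi=:v\in\mathbb{R}^3$ need not vanish.

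\emph{Step 2: correct the boundary value.} For small $h\in(0,\eta)$ set $\theta_h(a)=(1-a/h)^+$, which is Lipschitz, equals $1$ at $0$, vanishes on $[h,\omega]$, and has $\|\theta_h\|_1=h/2$. We look for $\phi_h=\phi+\theta_h w$ with $w\in\mathbb{R}^3$ chosen so that $\phi_h(0)=\mathcal{B}\phi_h$, that is,
\begin{equation*}
w=v+M_h w,\qquad M_h:=\int_0^h B(a)\theta_h(a)\,da.
\end{equation*}
Because $\beta\in L^\infty$, the entries of $M_h$ are bounded by $\|\beta\|_\infty h/2$, so $\|M_h\|\to0$ as $h\to 0$. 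For small $h$ the matrix $I-M_h$ is therefore invertible, with $w_h=(I-M_h)^{-1}v$ bounded uniformly in $h$. Then $\|\phi_h-\phi\|_1\le |w_h|\,h/2\to0$.

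\emph{Step 3: check the domain conditions.} We have $\phi_h\in W^{1,1}$, and $\mu\phi_h\in L^1$ on $[h,\omega]$ by Step 1. The obstacle is near $a=0$: the hypothesis only gives $\mu\in L^1_{\mathrm{loc}}(0,\omega)$, so we need $\mu\theta_h\in L^1(0,h)$. Under (H2) I read $L^1_{\mathrm{loc}}$ as local integrability on $[0,\omega)$, which is the standard demographic assumption (compatible with \eqref{eq:mu}, where the blow-up occurs only at $\omega$). With this reading $\mu\in L^1(0,h)$ and the condition holds. If one insists on the open interval, I would instead replace $\theta_h$ by a profile supported in $[0,h]$ taking value $1$ at $0$ and use integrability of $\mu$ near $0$, which must be assumed in any case for $\Pi(a)>0$. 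Consequently $\phi_h\in D(A)$ and $\|\psi-\phi_h\|_1<2\varepsilon$ for small $h$, which proves density.
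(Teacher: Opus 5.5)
Your argument is correct. Approximating by $C_c^\infty((0,\omega),\mathbb{R}^3)$ and then adding the tent correction $\theta_h w_h$, with $w_h=(I-M_h)^{-1}\mathcal{B}\phi$, yields elements of $D(A)$ within $2\varepsilon$ of $\psi$. The paper gives no argument of its own here: it only says the proof is entirely analogous to Lemma 3.1.6 of \cite{MM} for the two-equation SIS system. Your construction is therefore a self-contained version of what the paper outsources, and it handles the full $3\times 3$ boundary matrix $B$ at once through the Neumann series for $I-M_h$.

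\textbf{On Step 3.} The fallback sentence (``replace $\theta_h$ by a profile supported in $[0,h]$ taking value $1$ at $0$'') adds nothing, since $\theta_h$ already is such a profile. More importantly, no choice of profile could help. Integrability of $\mu$ near $a=0$ is \emph{necessary} for the proposition, not just a convenient reading of (H2):
\begin{itemize}
\item Suppose $\mu\ge\mu_0>0$ were not integrable on any $(0,\epsilon)$.
\item Take $\psi\in D(A)$. Since $\psi\in W^{1,1}$ is continuous and $\mu\psi\in L^1$, we must have $\psi(0)=0$, hence $\mathcal{B}\psi=0$.
\item So $D(A)\subset\ker\mathcal{B}$. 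This is a closed subspace of $L^1$, because $\mathcal{B}\colon L^1\to\mathbb{R}^3$ is bounded.
\item It is also proper: its first component is the functional $\int_0^\omega\beta(\psi_1+(1-p)\psi_2+(1-q)\psi_3)\,da$, which is nonzero since $\beta\ge\beta_0>0$.
\item Hence $D(A)$ would not be dense.
\end{itemize}
You should therefore state $\mu\in L^1_{\mathrm{loc}}([0,\omega))$ outright as the hypothesis being used. This is also what the paper implicitly assumes in the proof of Proposition~\ref{p:semigroup1}, where $F(a)=e^{-\int_0^a\mu(s)\,ds}$ is asserted to be defined and absolutely continuous on all of $[0,\omega]$.
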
 
\begin{proof} In Lemma 3.1.6 in \cite{MM}, the result is proven for a system of two equations (SIS model). The proof in our  case  is entirely analogous.
\end{proof}

\par\smallskip
\begin{proposition}\label{p:semigroup1} The linear operator $A$ generates a strongly continuous semigroup $\{S(t)\}_{t\geq 0}$.
\end{proposition}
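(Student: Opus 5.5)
We split $A$ as $A=A_0+A_3$, where $A_0=A_1+A_2$ is the transport operator with death term and nonlocal boundary condition,
\[
A_0\psi=-\psi'-\mu\psi,\qquad D(A_0)=D(A),
\]
and $A_3=\mathcal G$ is a bounded multiplication operator on $X$. Indeed $\|\mathcal G\psi\|_1\le 2(\|\delta\|_\infty+\|\gamma\|_\infty)\|\psi\|_1$ by (H1). By the bounded perturbation theorem (\cite[Chap.~3, Thm~1.1]{Pazy}) it then suffices to show that $A_0$ generates a $C_0$-semigroup. Note that $A_0$ acts diagonally; only the boundary operator $\mathcal B$ couples the three components, and it does so through the triangular matrix $B$.

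To prove that $A_0$ generates a semigroup we use the Hille--Yosida theorem in the Feller--Miyadera--Phillips form. Density of $D(A_0)=D(A)$ is Proposition~\ref{p:dense}. It remains to solve the resolvent equation $\lambda\psi+\psi'+\mu\psi=g$ with $\psi(0)=\mathcal B\psi$, for $\lambda$ large. Variation of constants gives
\[
\psi(a)=\e^{-\lambda a}\Pi(a)\psi(0)+\int_0^a \e^{-\lambda(a-\sigma)}\frac{\Pi(a)}{\Pi(\sigma)}g(\sigma)\,d\sigma .
\]
Applying $\mathcal B$ and imposing the boundary condition yields the $3\times3$ linear system
\[
\bigl(I-\widehat B(\lambda)\bigr)\psi(0)=\mathcal B(\text{second term}),\qquad \widehat B(\lambda)=\int_0^\omega \e^{-\lambda a}\Pi(a)B(a)\,da .
\]
Since $\|\widehat B(\lambda)\|\le \|\beta\|_\infty/\lambda\to0$, the matrix $I-\widehat B(\lambda)$ is invertible for large $\lambda$ (it is even explicitly triangular). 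One then checks that $\psi\in W^{1,1}$ and $\mu\psi\in L^1$. For the latter we integrate the equation, using $\mu\ge0$ and $\Pi(a)\to0$ as $a\to\omega$ from \eqref{eq:mu}. This gives the resolvent $R(\lambda,A_0)$ explicitly.

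The main obstacle is the uniform resolvent bound $\|R(\lambda,A_0)^n\|\le M(\lambda-\omega_0)^{-n}$. We would avoid iterating powers by using an equivalent norm, or alternatively the positivity of the resolvent. For $g\ge0$ all terms are nonnegative, because the entries of $B$ are nonnegative and $(I-\widehat B)^{-1}=\sum\widehat B^k\ge0$. So it suffices to estimate $\int_0^\omega(\psi_1+\psi_2+\psi_3)\,da$, and the $L^1$ norm is additive on the cone. Integrating the equation over $[0,\omega]$ gives
\[
\lambda\|\psi\|_1+\int\mu\psi\,da=\|g\|_1+|\psi(0)|.
\]
Moreover, $|\psi(0)|=\sum_j(\mathcal B\psi)_j\le \|\beta\|_\infty\|\psi\|_1$, since the column sums of $B$ equal $\beta$. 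Hence $(\lambda-\|\beta\|_\infty)\|\psi\|_1\le\|g\|_1$ for $g\ge0$. For general $g$ we split $g=g^+-g^-$, or use $|R g|\le R|g|$, which holds by positivity. This gives the Hille--Yosida estimate with $M=1$ and $\omega_0=\|\beta\|_\infty$. Thus $A_0$ generates a positive quasi-contractive $C_0$-semigroup, and adding the bounded operator $\mathcal G$ concludes the proof. As an alternative route, one could mimic Banasiak's construction \cite{Banasiak}, adapting \cite{MM} from two to three components, exactly as was done for Proposition~\ref{p:dense}.
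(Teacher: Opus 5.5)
Your proof is correct and follows the same route as the paper: bounded perturbation to dispose of $A_3=\mathcal G$, then the Hille--Yosida theorem for $A_1+A_2$, with the resolvent computed by variation of constants and the boundary constants fixed through the triangular matrix $B$. The only real difference is how you get the bound $\|R(\lambda,A_1+A_2)\|\le(\lambda-\|\beta\|_\infty)^{-1}$: the paper estimates the components one at a time (first $\psi_2,\psi_3$, then $\psi_1$), while you use positivity of the resolvent, integrate the equation, and note that the column sums of $B$ equal $\beta$, which is cleaner and also yields the positivity of the resolvent that the paper invokes in Proposition~\ref{p:semigroup2} (keeping the term $\int_0^\omega\mu\psi\,da\ge\mu_0\|\psi\|_1$ would even recover the paper's sharper shift $\lambda+\mu_0-\|\beta\|_\infty$).
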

\begin{proof} The operator $A_3$ is linear and bounded from $L^{1}\left([0\,,\omega]\,,\mathbb{R}^3\right)$ into itself. Hence, by the theorem of perturbations by bounded linear operators (see e.g. \cite[Theorem 1.1 p. 76]{Pazy}), we only have to show that $A_1+A_2$ generates a strongly continuous semigroup. For a similar model to \eqref{modello}, but with $r(t)=0$ and hence also $q=0$, the proof is contained in \cite{MM}. We follow the same reasoning which involves the use of Hille-Yoshida Theorem (see, e.g. \cite[Corollary 3.6 p. 76]{EngelNagel}). Notice that $D(A_1+A_2)=D(A)$ is dense in $L^{1}\left([0\,,\omega]\,,\mathbb{R}^3\right)$ (see Proposition \ref{p:dense}) and it is easy to see that $A_1$ and $A_2$ are closed in $D(A)$. Assume, in the following, that 
\begin{equation}\label{eq:lambda}
\lambda  > \|\beta\|_{\infty} -\mu_0.
\end{equation}
We prove that the operator $\lambda I-(A_1+A_2)$ has a bounded inverse, defined in $L^{1}\left([0\,,\omega]\,,\mathbb{R}^3\right)$, for every $\lambda$ satisfying \eqref{eq:lambda}. This inverse is the resolvent, denoted with $R(\lambda, A_1+A_2)$ as usual. 

Given $\varphi=(\varphi_1 , \varphi_2, \varphi_3)\in L^{1}\left([0\,,\omega]\,,\mathbb{R}^3\right)$, the value $\psi=(\psi_1, \psi_2, \psi_3)=R(\lambda, A_1+A_2)(\varphi)$ is the solution of the system 
\[
\left\{
\begin{array}{rl}

\psi_1'(a)+(\lambda+\mu(a))\psi_1(a)=&\!\!\!\varphi_1(a)\\
\psi_2'(a)+(\lambda+\mu(a))\psi_2(a)=&\!\!\!\varphi_2(a)\\
\psi_3'(a)+(\lambda+\mu(a))\psi_3(a)=&\!\!\!\varphi_3(a)
\end{array}
\right.
\]
which satisfies the boundary conditions $\psi(0)=\mathcal{B}\psi$. We obtain that
\begin{equation}
\label{e:risolvente}
\psi_i(a)=C_ie^{-\lambda a -\int_0^a \mu(s)\, ds}+\int_0^a \varphi_i(\alpha)e^{-\lambda (a-\alpha) -\int_{\alpha}^a \mu(s)\, ds}\, d\alpha, \, \, a\in [0, \omega]
\end{equation}
with $i=1,2,3$ and for some constants  $C_i$ that will be determined below.  Since $\mu \in L^1_{\mathrm loc}(0, \omega)$ and satisfies \eqref{eq:mu}, the function
\[
F(a):= e^{-\int_0^a \mu(s)\, ds}
\]
is defined for every $a \in [0, \omega]$, with $F(\omega)=0$, and is absolutely continuous in its domain with $F'(a)=-\mu(a)F(a)$ a.e. Consequently, the function $\mu\psi \in L^{1}\left([0\,,\omega]\,,\mathbb{R}^3\right)$. For $\lambda$ satisfying \eqref{eq:lambda} we can also uniquely determine the constants $C_i, \, i=1,2,3$, by means of the boundary conditions $\psi(0)=\mathcal{B}\psi$. Precisely 
\begin{equation}\label{eq:C23}
\begin{array}{l}
\displaystyle  C_2=\frac{p\int_0^\omega \beta(a)\left[\int_0^a\varphi_2(\alpha)e^{-\lambda (a-\alpha) -\int_{\alpha}^a \mu(s)\, ds}\, d\alpha\right]\,da}{1-p\int_0^\omega \beta(a)\e^{-\lambda a -\int_0^a \mu(s)\, ds}\, \, da},\\
\\
\displaystyle C_3=\frac{q\int_0^\omega \beta(a)\left[\int_0^a\varphi_3 (\alpha)e^{-\lambda (a-\alpha) -\int_{\alpha}^a \mu(s)\, ds}\, d\alpha \right]\,da}{1-q\int_0^\omega \beta(a)\e^{-\lambda a -\int_0^a \mu(s)\, ds}\, \, da},\\
\end{array}
\end{equation}
and 
\begin{equation}\label{eq:C1}
\begin{array}{rl}
     C_1=&\frac{\int_0^\omega \beta(a)\left[\int_0^a\varphi_1(\alpha)e^{-\lambda (a-\alpha) -\int_{\alpha}^a \mu(s)\, ds} \, d\alpha\right]\, da }{1-\int_0^\omega \beta(a)\e^{-\lambda a -\int_0^a \mu(s)\, ds}\, \, da} \\
     \\
&-\,\frac{\int_0^{\omega} \beta(a)\left[ (1-p)\psi_2(a)+(1-q)\psi_3(a) \right]\, da}{1-\int_0^\omega \beta(a)\e^{-\lambda a -\int_0^a \mu(s)\, ds}\, \, da}    
\end{array}   
\end{equation}
Therefore, $\psi \in D(A)$; moreover,  with quite standard computations,  we have that
\[
\|\psi_2\|_1 \le \frac{\|\varphi_2\|_1}{\lambda+\mu_0-p\|\beta\|_{\infty}}, \qquad \|\psi_3\|_1 \le \frac{\|\varphi_3\|_1}{\lambda+\mu_0-q\|\beta\|_{\infty}}, 
\]
and
\[
\|\psi_1\|_1 \le \frac{1}{\lambda+\mu_0-\|\beta\|_{\infty}}\left(\|\varphi_1\|_1 +\frac{(1-p)\|\beta\|_{\infty}\|\varphi_2\|_1}{\lambda+\mu_0-p\|\beta\|_{\infty}}+\frac{(1-q)\|\beta\|_{\infty}\|\varphi_3\|_1}{\lambda+\mu_0-q\|\beta\|_{\infty}} \right).
\]
Hence
\[
\begin{array}{rl}
\displaystyle \|\psi\|_1=&\|\psi_1\|_1+\|\psi_2\|_1+\|\psi_3\|_1 \\
\\\displaystyle \le& \frac{\|\varphi_1\|_1}{\lambda+\mu_0-\|\beta\|_{\infty}}+\frac{\|\varphi_2\|_1}{\lambda+\mu_0-p\|\beta\|_{\infty}}\left( \frac{(1-p)\|\beta\|_{\infty}}{\lambda +\mu_0 -\|\beta\|_{\infty}}+1\right)\\
\\\displaystyle +&\frac{\|\varphi_3\|_1}{\lambda+\mu_0-q\|\beta\|_{\infty}}\left( \frac{(1-q)\|\beta\|_{\infty}}{\lambda +\mu_0 -\|\beta\|_{\infty}}+1\right)\\
\\\displaystyle=& \frac{\| \varphi\|_1}{\lambda +\mu_0 -\|\beta\|_{\infty}}.
\end{array}
\]
The resolvent operator $R(\lambda, A_1+A_2) \colon L^{1}\left([0\,,\omega]\,,\mathbb{R}^3\right) \to D(A)$ is then linear and bounded, with 
\begin{equation}
\label{e:norma risolvete}
\|R(\lambda, A_1+A_2)\|\le \frac{1}{\lambda-(\|\beta\|_{\infty}+\mu_0)}.
\end{equation}
Therefore, by the Hille-Yoshida theorem (see \cite[Corollary 3.6 p. 76]{EngelNagel}), $A_1+A_2$ generates a strongly continuous semigroup $\{S^1(t)\}_{t\geq 0}$ and 
\begin{equation}
\label{e:norma semigruppo}
\|S^1(t)\|\le \e^{(\|\beta\|_\infty+\mu_0)t}, \quad t\geq 0.
\end{equation}
\end{proof}

 The following result shows that the semigroup generated by $A$ is invariant with respect to the cone $X^+$ in  $L^{1}\left([0\,,\omega]\,,\mathbb{R}^3\right)$  defined in Section~\ref{s:notations}.
\begin{proposition} \label{p:semigroup2}   The semigroup $(S(t))_{t\geq 0}$ generated by $A$ is invariant with respect to the cone $X^+$ , that is
\[
S(t)X^+ \subseteq X^+, \text{ for every } t \geq 0 .
\]
\end{proposition}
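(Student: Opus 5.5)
We will prove positivity of $S(t)$ in two stages: first for the semigroup $\{S^1(t)\}_{t\ge0}$ generated by $A_1+A_2$, then for the perturbed semigroup generated by $A=A_1+A_2+A_3$.

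\emph{Positivity of $S^1(t)$.} For $\lambda$ large (satisfying \eqref{eq:lambda}) we show that the resolvent $R(\lambda,A_1+A_2)$ is a positive operator. We then invoke the exponential formula
\[
S^1(t)\varphi=\lim_{n\to\infty}\Bigl(\tfrac{n}{t}R\bigl(\tfrac{n}{t},A_1+A_2\bigr)\Bigr)^n\varphi ,
\]
see \cite[Chap.~1]{Pazy}. Since $X^+$ is closed, positivity of $S^1(t)$ follows.

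To check positivity of the resolvent, take $\varphi\in X^+$ and use the explicit representation \eqref{e:risolvente}. The integral terms are nonnegative, so it suffices to show $C_i\ge0$. For $\lambda>\|\beta\|_\infty-\mu_0$ we have $\int_0^\omega\beta(a)\e^{-\lambda a-\int_0^a\mu}\,da\le \|\beta\|_\infty/(\lambda+\mu_0)<1$, and the same bound holds with the factors $p$ and $q$. Hence the denominators in \eqref{eq:C23} and \eqref{eq:C1} are positive, and therefore $C_2,C_3\ge0$.

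The main obstacle is $C_1$, because \eqref{eq:C1} as displayed carries a minus sign in front of the term containing $\psi_2,\psi_3$. This looks like a sign slip. Imposing the first boundary condition
\[
\psi_1(0)=\int_0^\omega\beta\bigl(\psi_1+(1-p)\psi_2+(1-q)\psi_3\bigr)\,da
\]
directly on \eqref{e:risolvente} gives
\[
C_1\Bigl(1-\int_0^\omega\beta\,\e^{-\lambda a-\int_0^a\mu}\,da\Bigr)
=\int_0^\omega\beta(a)\int_0^a\varphi_1(\alpha)\,\e^{-\lambda(a-\alpha)-\int_\alpha^a\mu}\,d\alpha\,da
+\int_0^\omega\beta\bigl((1-p)\psi_2+(1-q)\psi_3\bigr)\,da .
\]
So we would rederive $C_1$ in this form. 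Since $\psi_2,\psi_3\ge0$ (already shown) and $p,q\le1$, this yields $C_1\ge0$ and hence $\psi\ge0$.

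\emph{Passing to $A$.} The bounded perturbation $A_3=\mathcal G$ is not positive, because of the diagonal entry $-(\delta+\gamma)$. We therefore split
\[
\mathcal G=\mathcal G_+-\bigl(\|\delta\|_\infty+\|\gamma\|_\infty\bigr)\mathbb I,
\]
where $\mathcal G_+$ has nonnegative entries $\delta$, $\|\delta\|_\infty+\|\gamma\|_\infty-\delta-\gamma$, $\gamma$ in the second column, all nonnegative a.e. by (H1). The semigroup generated by $A_1+A_2-c\,\mathbb I$, with $c=\|\delta\|_\infty+\|\gamma\|_\infty$, is $\e^{-ct}S^1(t)$, which is positive.

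The semigroup generated by $(A_1+A_2-c\,\mathbb I)+\mathcal G_+$ is given by the Dyson--Phillips series
\[
S(t)=\sum_{n\ge0}S_n(t),\qquad S_0(t)=\e^{-ct}S^1(t),\qquad S_{n+1}(t)x=\int_0^tS_0(t-s)\,\mathcal G_+S_n(s)x\,ds .
\]
The series converges in operator norm (\cite[Chap.~3]{Pazy}). Each term is positive because it is a composition and integral of positive operators, and $X^+$ is a closed convex cone. Hence $S(t)X^+\subseteq X^+$ for every $t\ge0$.

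Alternatively, the same splitting can be handled through the resolvent Neumann series $R(\lambda,A)=\sum_n R(\lambda,A_0)\bigl(\mathcal G_+R(\lambda,A_0)\bigr)^n$ for large $\lambda$, with $A_0=A_1+A_2-c\,\mathbb I$, followed again by the exponential formula.
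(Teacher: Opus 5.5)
Your proof is correct. Its first half is the same as the paper's. The paper also deduces positivity of $S^1(t)$ from positivity of $R(\lambda,A_1+A_2)$: it cites \cite{BA}, where you use the exponential formula. However, the paper only asserts that the resolvent is positive. Your rederivation of $C_1$, with a plus sign in front of $\int_0^\omega\beta\bigl((1-p)\psi_2+(1-q)\psi_3\bigr)\,da$, is right, and it is what actually makes that assertion true. With \eqref{eq:C1} as printed, $C_1\ge 0$ would not follow.

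The second half takes a genuinely different route. The paper computes the explicit, entrywise nonnegative matrix exponential \eqref{e:semigruppo G} and then writes $S(t)=\e^{A_3t}S^1(t)$. For that product to be a semigroup, $\e^{A_3s}$ must commute with $S^1(t)$, and this fails here in general. The coefficients $\delta,\gamma$ depend on $a$, and $\mathcal G$ need not preserve the boundary condition $\psi(0)=\mathcal B\psi$. So the paper's last step is not justified as written. It can be repaired with the Lie--Trotter formula $S(t)x=\lim_n\bigl(\e^{A_3t/n}S^1(t/n)\bigr)^n x$, which holds for bounded $A_3$ and is where the positivity of $\e^{A_3t}$ is really used.

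Your route needs only $\mathcal G+c\,\mathbb I\ge 0$, that is, nonnegative off-diagonal entries of $\mathcal G$, together with the Dyson--Phillips (or Neumann) series. It requires neither computing $\e^{A_3t}$ nor any commutation. It also applies unchanged to any bounded perturbation of this quasi-positive type.

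One small precision: $\mathcal G_+=\mathcal G+c\,\mathbb I$ also has the entry $c$ in positions $(1,1)$ and $(3,3)$, not only the second-column entries you list. This is harmless, since those entries are nonnegative.
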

\begin{proof}
 The semigroup generated by $A_3$ is  $\{\e^{A_3 t}\}_{t\geq 0}$. By the special form of $\mathcal{G}$, we have that $e^{A_3t}\psi$ is defined as follows, 
 \begin{equation}
 \label{e:semigruppo G}
e^{A_3t}\psi=
\begin{pmatrix} 
1 &\frac{\delta}{\delta+\gamma}\left(1-e^{-(\delta+\gamma) t}\right) & 0\\
0 & e^{-(\delta +\gamma)t} & 0\\
0 & \frac{\gamma}{\delta+\gamma}\left(1-e^{-(\delta+\gamma) t}\right) & 1
\end{pmatrix}\psi,
 \end{equation}
for $\psi \in L^{1}\left([0\,,\omega]\,,\mathbb{R}^3\right) $ and $t\in [0, T]$, then $\{\e^{A_3t}\}_{t\geq 0}$ is a positive semigroup. 
\par
 As seen in the proof of the previous theorem,
 the resolvent operator $R(\lambda\,,A_1+A_2)$ defined by \eqref{e:risolvente}, with constants $C_1, \, i=1,2,3$ satisfying \eqref{eq:C23}-\eqref{eq:C1}, is positive, hence
 the semigroup $\{S^1(t)\}_{t\geq 0}$ generated by $A_1+A_2$
is positive (see \cite[pp 97-98]{BA}).
\par
So we can conclude that $S(t)=\e^{A_3t}S_1(t)$ is positive.
\end{proof}
\par
%Notice that, by \eqref{e:semigruppo G}, we have that
%\[
%\|e^{A_3t}\|\leq \e^{\left(\|\gamma\|_\infty+\|\delta\|_\infty\right)t}
%, \qquad \forall t\in[0\,,T].
%\]
Moreover, by \cite[Theorem 1.1, page 76]{Pazy}, \eqref{e:norma semigruppo} and the definition of $\mathcal{G}$, we obtain
\begin{equation} \label{e:costante S}
    \|S(t)\|\leq \e^{\left(\|\beta\|_\infty    +2\|\gamma\|_\infty+2\|\delta\|_\infty+\mu_0\right)t}
    \leq L, \qquad \forall t\in[0\,,T],
\end{equation}
where $L=\e^{\left(\|\beta\|_\infty    +2\|\gamma\|_\infty+2\|\delta\|_\infty+\mu_0\right)T}$.
\par\smallskip
\par\medskip
Finally, the function $f:L^{1}\left([0\,,\omega]\,,\mathbb{R}^3\right)\to L^{1}\left([0\,,\omega]\,,\mathbb{R}^3\right)$ is defined by
\[
f(\psi_1\,,\psi_2\,,\psi_3)(a)
 =\left(-\Lambda(a\,,\psi_2)\psi_1(a)\,,
     \Lambda(a\,,\psi_2)\psi_1(a)\,,0
 \right),
 \quad a\in[0\,,\omega],
\]
Where $\Lambda:[0\,,\omega]\times L^{1}([0\,,\omega])\to\mathbb{R}$ is defined by
\[
 \Lambda(a\,,\zeta)=\int_0^\omega k(a\,,\sigma)\zeta(\sigma)\,d\sigma.
\]
Notice that
\begin{equation}
    \label{Lambda}
    |\Lambda(a\,,\zeta)|\leq \|k\|_\infty \|\zeta\|_1,
\end{equation}
for every $\zeta\in  L^{1}([0\,,\omega]$ and a.e.~ $a\in[0\,,\omega]$.
\par

\begin{proposition}
    The map $f$ is continuously differentiable.
\end{proposition}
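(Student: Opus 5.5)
The idea is to observe that $f$ is a continuous bilinear map composed with continuous linear maps; such maps are automatically $C^\infty$, in particular $C^1$. Define the linear operator $K:L^1([0,\omega])\to L^\infty([0,\omega])$ by $(K\zeta)(a)=\Lambda(a,\zeta)=\int_0^\omega k(a,\sigma)\zeta(\sigma)\,d\sigma$. By \eqref{Lambda} it satisfies $\|K\zeta\|_\infty\le\|k\|_\infty\|\zeta\|_1$, so $K$ is bounded. Measurability of $a\mapsto\Lambda(a,\zeta)$ follows from Fubini's theorem, since $k\in L^\infty([0,\omega]^2)$ and $\zeta\in L^1$.

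Next, introduce the bilinear map $b:L^1\times L^1\to L^1$, $b(\zeta,\eta)(a)=(K\zeta)(a)\,\eta(a)$. Hölder's inequality gives
\[
\|b(\zeta,\eta)\|_1\le\|K\zeta\|_\infty\|\eta\|_1\le\|k\|_\infty\|\zeta\|_1\|\eta\|_1 ,
\]
so $b$ is a bounded bilinear map. Then $f(\psi)=\bigl(-b(\psi_2,\psi_1),\,b(\psi_2,\psi_1),\,0\bigr)$, i.e. $f=J\circ b\circ(\pi_2,\pi_1)$, where $\pi_j$ are the bounded coordinate projections on $X$ and $J(z)=(-z,z,0)$ is bounded linear.

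The derivative is computed directly. From
\[
b(\psi_2+h_2,\psi_1+h_1)-b(\psi_2,\psi_1)=b(\psi_2,h_1)+b(h_2,\psi_1)+b(h_2,h_1)
\]
and $\|b(h_2,h_1)\|_1\le\|k\|_\infty\|h\|_1^2=o(\|h\|_1)$, the Fréchet derivative is
\[
f'(\psi)h=\bigl(-\Lambda(\cdot,\psi_2)h_1-\Lambda(\cdot,h_2)\psi_1,\;\Lambda(\cdot,\psi_2)h_1+\Lambda(\cdot,h_2)\psi_1,\;0\bigr).
\]
For continuity of $\psi\mapsto f'(\psi)$ in operator norm, the same bilinear estimate gives $\|f'(\psi)-f'(\phi)\|_{\mathcal L(X)}\le 2\|k\|_\infty\|\psi-\phi\|_1$, so $f'$ is Lipschitz.

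Since $f$ has no explicit $t$-dependence, it is also continuously differentiable as a map on $[0,T]\times X$, which is what Theorem~\ref{t:regularity} requires. The only point needing care, though it is not really an obstacle, is the measurability and $L^\infty$ bound of $a\mapsto\Lambda(a,\zeta)$, which ensures $b$ maps into $L^1$. This is exactly handled by Fubini's theorem and \eqref{Lambda}.
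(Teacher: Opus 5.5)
Your proof is correct and follows the same route as the paper: you write $f$ through the bounded bilinear map $(\psi_1,\psi_2)\mapsto\psi_1\Lambda(\cdot,\psi_2)$, which is bounded by \eqref{Lambda}, and use that a continuous bilinear map is $C^1$ with derivative $h(\psi_1,\delta_2)+h(\delta_1,\psi_2)$. Your additional details are all sound: the explicit quadratic remainder, the Lipschitz bound on $f'$, and the measurability of $a\mapsto\Lambda(a,\zeta)$ via Fubini, none of which the paper spells out.
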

\begin{proof}
 We can write 
 \[
 f(\psi_1\,,\psi_2\,,\psi_3)=\left(-h(\psi_1\,,\psi_2)\,,h(\psi_1\,,\psi_2)\,,0\right)
 \]
 where $h:L^{1}(0\,,\omega)\times L^{1}(0\,,\omega)\to L^{1}(0\,,\omega)$ is the bilinear map defined by
 
  \[
 h(\psi_1\,,\psi_2)(a)=\psi_1(a)\Lambda(a\,,\psi_2), \quad a\in[0\,,\omega].
 \]
 By \eqref{Lambda}, 
 \[
 \left\|h(\psi_1\,,\psi_2)\right\|_1\leq 
 \|k\|_\infty \|\psi_1\|_1\|\psi_2\|_1.
 \]
 Since $h$ is bilinear and continuous it is Fr\'echet differentiable and its Frech\'et derivative 
 \[
 dh(\psi_1\,,\psi_2)(\delta_1\,,\delta_2)=h(\psi_1\,,\delta_2)+h(\delta_1\,,\psi_2)
 \]
 is continuous in the space of linear operator from $L^{1}(0\,,\omega)\times L^{1}(0\,,\omega)$ in $L^{1}(0\,,\omega)$ (see, for instance,  \cite{AP}).  
Since $h$ is continuously differentiable, also $f$ is continuously differentiable.
\end{proof}
\par

 Adding up all the equations in \eqref{modello} we obtain that the total population $n$ satisfies the following linear Cauchy problem
\begin{equation}
\label{equazione-n}
\begin{cases}
  n_t(a\,,t)+n_a(a\,,t)=-\mu(a)n(a\,,t) &\\
  n(0\,,t)= \int_0^\omega \beta(a)n(a\,,t)\,da &\\
  n(a\,,0)=n_0(a)  &
\end{cases}
\end{equation}
where 
\begin{equation}
\label{equazione-n_0}n_0=s_0+i_0+r_0.
\end{equation}
Therefore, if $x=(s\,,i\,,r)$ is a classical nonnegative solution of the Cauchy problem \eqref{eq:2:CP}, then $n=s+i+r$  is a classical solution to \eqref{equazione-n} and by \cite[Chap.~1, Thm.~4.2 and Thm.~4.3]{Iannelli} there exists a constant $M=M(T)\geq 0$ independent of $n_0$, such that
\begin{equation}
\label{e:n-stima}
    \|n(\cdot\,,t)\|_1\leq M\|n_0\|_1 
\end{equation}
for every $t\in[0\,,T]$.

%The aim of our discussion is to prove that problem \eqref{astratta1} admits a unique solution taking values in $X^+$, provided that we start from an initial condition in $X^+$. 
The topological methods we use do not apply directly to \eqref{eq:2:CP}, so we will introduce two auxiliary problems. The first of these is the following:
 \begin{equation}
 \label{eq:IVP-f-hat}
 \tag{$\hat{\mathcal{P}}$}
    \left\{
    \begin{split}
      & x'(t)=Ax(t)+\hat f(x(t)) \\
      & x(0)=\hat x
    \end{split}
    \right.
\end{equation}
where $\|\hat x\|_1<\|x_0\|_1+1$ and the nonlinear term $\hat f:L^{1}\left([0\,,\omega]\,,\mathbb{R}^3\right)\to L^{1}\left([0\,,\omega]\,,\mathbb{R}^3\right)$ is defined by
\begin{equation}\label{fnuova}
\hat 
f(\psi)(a)= 
\left(-\Xi\left(\Lambda(a\,,\psi_2)\right)\psi_1(a)\,,
     \Xi\left(\Lambda(a\,,\psi_2)\right)\psi_1(a)\,,0
 \right)
\end{equation}
$a\in[0\,,\omega]$,
where $\Xi:\mathbb{R}\to \mathbb{R}$ is a differentiable function such that
\begin{itemize}
 \item $\Xi(z)=z$ if $|z|\leq \|k\|_\infty M(\|x_0\|_1+1)$;
 \item $|\Xi(z)|\leq \|k\|_\infty M(\|x_0\|_1+2)$ for all $z\in\mathbb{R}$;
 \item $\Xi'$ is a continuous function;
 \item $0\leq \Xi'(z)\leq 2$ for all $z\in\mathbb{R}$.
\end{itemize}
The second problem is  \eqref{astratta2}. 

\begin{proposition}
\label{fcontinua}
  The map $\hat f$ defined in \eqref{fnuova} is continuously differentiable. Moreover, for every $\rho>0$ there exists  $C_\rho>0$ such that
  \begin{equation}
\label{lipschitz}
\|\hat f(\psi)-\hat f(\varphi)\|_1\leq 
C_\rho\|\psi-\varphi\|_1,  
\end{equation}
 for every $\psi,\varphi\in L^{1}\left([0\,,\omega]\,,\mathbb{R}^3\right)$, $\|\varphi\|_1\leq \rho$, and it satisfy the following growth condition
\begin{equation}
\label{sublinear}
\|\hat f(\psi)\|_1\leq 
2c\|\psi\|_1,
\end{equation}
for every $\psi\in L^{1}\left([0\,,\omega]\,,\mathbb{R}^3\right)$ with $c=\|k\|_\infty M(\|x_0\|_1+2)$.
\end{proposition}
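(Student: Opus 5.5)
We need to prove three things about $\hat f$: that it is continuously differentiable, that it satisfies the local Lipschitz bound \eqref{lipschitz}, and that it satisfies the growth bound \eqref{sublinear}. The plan follows the proof given for $f$. Write
\[
\hat f(\psi)=\left(-\hat h(\psi_1,\psi_2),\,\hat h(\psi_1,\psi_2),\,0\right),
\qquad
\hat h(\psi_1,\psi_2)(a)=\Xi\bigl(\Lambda(a,\psi_2)\bigr)\psi_1(a).
\]
Then $\hat h$ is the composition of three maps:
\begin{itemize}
\item the bounded linear map $\psi_2\mapsto \Lambda(\cdot,\psi_2)$ from $L^1(0,\omega)$ into $L^\infty(0,\omega)$, with norm at most $\|k\|_\infty$ by \eqref{Lambda};
\item the Nemytskii operator $g\mapsto \Xi\circ g$ on $L^\infty(0,\omega)$;
\item the continuous bilinear pairing $L^\infty\times L^1\to L^1$, $(g,\psi_1)\mapsto g\psi_1$.
\end{itemize}

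\textbf{Differentiability.} Since $\Xi\in C^1$ with $\Xi'$ continuous and bounded, the Nemytskii operator $N(g)=\Xi\circ g$ is Fr\'echet differentiable on $L^\infty$, with $dN(g)\eta=\Xi'(g)\eta$. To see this, use the mean value theorem pointwise:
\[
\Xi(g+\eta)-\Xi(g)-\Xi'(g)\eta=\bigl(\Xi'(g+\theta\eta)-\Xi'(g)\bigr)\eta .
\]
Here $g$ and $g+\theta\eta$ take values a.e.\ in a fixed compact interval, and $\Xi'$ is uniformly continuous on compacts. So the sup norm of the remainder is $o(\|\eta\|_\infty)$. The same uniform continuity shows that $g\mapsto \Xi'(g)$ is continuous into $L^\infty$, hence $dN$ is continuous. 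The chain rule together with the product rule for the bilinear pairing then gives
\[
d\hat h(\psi_1,\psi_2)(\delta_1,\delta_2)=\Xi\bigl(\Lambda(\cdot,\psi_2)\bigr)\delta_1+\Xi'\bigl(\Lambda(\cdot,\psi_2)\bigr)\Lambda(\cdot,\delta_2)\,\psi_1 .
\]
This derivative depends continuously on $(\psi_1,\psi_2)$, so $\hat h$, and therefore $\hat f$, is $C^1$. This step is the only genuinely delicate one: working in $L^\infty$ rather than $L^1$ for the middle factor is what makes the Nemytskii operator differentiable.

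\textbf{Growth bound.} Set $c=\|k\|_\infty M(\|x_0\|_1+2)$. Since $|\Xi|\le c$, we get $\|\hat h(\psi_1,\psi_2)\|_1\le c\|\psi_1\|_1$. Hence
\[
\|\hat f(\psi)\|_1=2\|\hat h(\psi_1,\psi_2)\|_1\le 2c\|\psi_1\|_1\le 2c\|\psi\|_1 ,
\]
which is \eqref{sublinear}.

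\textbf{Lipschitz bound.} Split the difference as
\[
\hat h(\psi)-\hat h(\varphi)=\Xi\bigl(\Lambda(\cdot,\psi_2)\bigr)(\psi_1-\varphi_1)+\Bigl[\Xi\bigl(\Lambda(\cdot,\psi_2)\bigr)-\Xi\bigl(\Lambda(\cdot,\varphi_2)\bigr)\Bigr]\varphi_1 .
\]
The first term has $L^1$ norm at most $c\|\psi_1-\varphi_1\|_1$. For the second term, $0\le\Xi'\le2$ makes $\Xi$ $2$-Lipschitz, so by \eqref{Lambda}
\[
\bigl|\Xi\bigl(\Lambda(a,\psi_2)\bigr)-\Xi\bigl(\Lambda(a,\varphi_2)\bigr)\bigr|\le 2\|k\|_\infty\|\psi_2-\varphi_2\|_1 \quad\text{for a.e. } a .
\]
The second term therefore has $L^1$ norm at most $2\|k\|_\infty\rho\,\|\psi_2-\varphi_2\|_1$ whenever $\|\varphi\|_1\le\rho$. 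Combining the two pieces and recalling the factor $2$ in $\hat f$, \eqref{lipschitz} holds with
\[
C_\rho=2\bigl(c+2\|k\|_\infty\rho\bigr).
\]
Only $\varphi$ needs to be bounded, exactly as in the statement, because $\Xi$ is bounded by $c$ and the first term carries no dependence on the size of $\psi$.
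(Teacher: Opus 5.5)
Your proposal is correct and follows essentially the same route as the paper. The paper also reduces to $\hat h(\psi_1,\psi_2)=\psi_1\,\Xi(\Lambda(\cdot,\psi_2))$, proves that $\psi\mapsto\Xi(\Lambda(\cdot,\psi))$ is $C^1$ from $L^1$ into $L^\infty$ using continuity of $\Xi'$ on a compact interval, applies the product rule, and for the Lipschitz bound uses the same splitting with the same constant $C_\rho=2(c+2\rho\|k\|_\infty)$. Your factoring of that map as a Nemytskii operator on $L^\infty$ composed with the bounded linear map $\Lambda$ is only a presentational difference, and your use of $\|\varphi\|_1\le\rho$ matches the statement better than the paper's text, which nominally assumes $\|\psi\|_1\le\rho$.
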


\begin{proof}
By \eqref{fnuova} it is enough to prove that  $\hat h:L^{1}\left([0\,,\omega]\,,\mathbb{R}^2\right)\to L^{1}(0\,,\omega)$ defined by
\[
\hat h(\psi_1\,,\psi_2)(a)
=\psi_1(a)\,\Xi\left(\Lambda(a\,,\psi_2)\right),
\quad\text{for a.e.~$a\in[0\,,\omega]$,}
\]
is continuously differentiable.
\par
Let us consider the map $g:L^{1}(0\,,\omega)\to L^{\infty}(0\,,\omega)$, $g(\psi)(a)=\Xi(\Lambda(a\,,\psi))$. 
By the continuity of $\Xi'$, the linearity of $\Lambda$ and  \eqref{Lambda}, for every
$\psi,\delta\in L^{1}(0\,,\omega)$ we have
\[
\begin{split}
  g(\psi+\delta)(a)-g(\psi)(a)  
   & =\Xi(\Lambda(a\,,\psi)+\Lambda(a\,,\delta))-\Xi(\Lambda(a\,,\psi))  \\
   & =\Xi'(\Lambda(a\,,\psi))\Lambda(a\,,\delta)
    +\varepsilon(\psi\,,\delta)(a)
\end{split}
\]
for a.e.~$a\in[0\,,\omega]$, where $\|\varepsilon(\psi\,,\delta)\|_\infty=o\left(\|\delta\|_1\right)$
\par
For every $\psi\in L^{1}(0\,,\omega)$, the linear map
$dg(\psi):L^{1}(0\,,\omega)\to L^{\infty}(0\,,\omega)$, defined by 
\[
dg(\psi)(\delta)(a) =\Xi'(\Lambda(a\,,\psi))\Lambda(a\,,\delta)
\quad\text{for a.e.~$a\in[0\,,\omega]$,}
\]
is continuous since $\|dg(\psi)(\delta)\|_\infty \leq 2\|k\|_\infty \|\delta\|_1$. Therefore $g$ is differentiable.
\par
We have to show that $dg$ is a continuous map from $L^{1}(0\,,\omega)$ to the space of the linear operators from $L^{1}(0\,,\omega)$ to $L^{\infty}(0\,,\omega)$. To do this, consider $\psi\in L^{1}(0\,,\omega)$ and a sequence $\{\psi_n\}_n\subset L^{1}(0\,,\omega)$ converging to $\psi$ in $L^1$-norm. 
\par
Fix $\epsilon>0$, we have to prove that
\begin{equation}
\label{e:diff-g}
\|dg(\psi)-dg(\psi_n)\|
=\sup_{\|\delta\|_1\leq 1}\|dg(\psi)(\delta)-dg(\psi_n)(\delta)\|_\infty<\epsilon
\end{equation}
for $n$ large enough. 
\par
Since $\Lambda(\cdot, \psi_{n})$ converges to $\Lambda(\cdot, \psi)$ in $L^{\infty}(0, \omega)$, there exists a closed, bounded interval $I\subset \mathbb{R}$ such that $\Lambda(a, \psi_{n}), \, \Lambda(a, \psi) \in I$ for every $a \in [0, \omega]$. Hence, by the continuity of $\Xi'$, there exists $\nu>0$ such that
for any $z,\bar z\in I$,
\[
|z-\bar z|<\nu \quad\Rightarrow\quad |\Xi'(z)-\Xi'(\bar z)|<\frac{\epsilon}{\|k\|_\infty}.
\]
Moreover, by \eqref{Lambda},
\[
\|\Lambda(\cdot\,,\psi)-\Lambda(\cdot\,,\psi_n)\|_\infty=
\|\Lambda(\cdot\,,\psi-\psi_n)\|_\infty
\leq \|k\|_\infty\|\psi-\psi_n\|_1<\nu
\]
for $n$ large enough, so we can deduce that, for every $\delta\in L^{1}(0\,,\omega)$, $\|\delta\|_1\leq 1$, and for a.e.~$a\in(0\,,\omega)$,
\[
 |dg(\psi)(\delta)(a)-dg(\psi_n)(\delta)(a)|
 =|\Lambda(a\,,\delta)|\,|\Xi'(\Lambda(a\,,\psi))-\Xi'(\Lambda(a\,,\psi_n))|\leq
 \epsilon
\]
for $n$ large enough, proving \eqref{e:diff-g}.
Finally $\hat h(\psi_1\,,\psi_2)=\psi_1g(\psi_2)$ is differentiable and for every $(\psi_1\,,\psi_2),(\delta_1\,,\delta_2)\in L^{1}\left([0\,,\omega]\,,\mathbb{R}^2\right)$ we have
\[
d\hat h(\psi_1\,,\psi_2)(\delta_1\,,\delta_2)=
g(\psi_2)\delta_1+\psi_1dg(\psi_2)\delta_2.
\]
It remains to prove that $d\hat h$ is continuous. If $\{(\psi^n_1\,,\psi^n_2)\}_n$ is a sequence in $L^{1}\left([0\,,\omega]\,,\mathbb{R}^2\right)$ converging to $(\psi_1\,,\psi_2)$ we have
\[
\begin{split}
 &\|d\hat h(\psi_1\,,\psi_2)-d\hat h(\psi^n_1\,,\psi^n_2)\| \\
 & \qquad  =
 \sup_{\|\delta_1\|_1+\|\delta_2\|_1\leq 1}
 \left\|\left(g(\psi_2)-g(\psi^n_2)\right)\delta_1 
 +\left(\psi_1dg(\psi_2)-\psi^n_1dg(\psi^n_2)\right) \delta_2\right\|_1 \\
 & \qquad \leq 
 \|g(\psi_2)-g(\psi^n_2)\|_\infty+\|dg(\psi_2)\|
 \|\psi_1-\psi^n_1\|_1 
 +\|dg(\psi_2)-dg(\psi_2^n)\|\|\psi^n_1\|_1 .
\end{split} 
\]
By the continuity of $g$ and $dg$,
the last term of the previous inequalities tends to $0$ as $n$  tends to infinity, therefore $d\hat h$ is continuous and $\hat f$ is continuously differentiable.

\smallskip
To prove \eqref{lipschitz}, consider $\psi,\varphi\in L^{1}\left([0\,,\omega]\,,\mathbb{R}^3\right)$ with $\|\psi\|_1\leq \rho$. 
We have that
\[
 \left| \Xi(\Lambda(a\,,\psi_2))-\Xi(\Lambda(a\,,\varphi_2))\right|\leq
 2 \left| \Lambda(a\,,\psi_2)-\Lambda(a\,,\varphi_2)\right|\leq
2\|k\|_\infty \|\psi_2-\varphi_2\|_1,
\]
 for a.e.~$a \in [0, \omega]$.
Moreover
\[
\begin{split}
 & \|\hat f(\psi)-\hat f(\varphi)\|_1= 2\int_0^\omega \left| \Xi(\Lambda(a\,,\psi_2))\psi_1(a)-\Xi(\Lambda(a\,,\varphi_2))\varphi_1(a)\right| \,da \\
 &\qquad \leq  
 2\int_0^\omega | \Xi(\Lambda(a\,,\psi_2))||\psi_1(a)-\varphi_1(a)| \,da \\
 &\qquad\quad+ 2\int_0^\omega \left| \Xi(\Lambda(a\,,\psi_2))-\Xi(\Lambda(a\,,\varphi_2))\right| |\varphi_1(a)|\,da \\
 &\qquad \leq  2 \|k\|_\infty M(\|x_0\|_1+2)\|\psi_1-\varphi_1\|_1+4\|\varphi\|_1\|k\|_\infty \|\psi_2-\varphi_1\|_1 \leq C_\rho\|\psi-\varphi\|_1
\end{split}
\]
for $C_\rho =2\left(c+2\rho\|k\|_\infty\right)$, with $c$ defined in \eqref{sublinear}.
\par
As to inequality \eqref{sublinear}, it follows immediately from the definition of $\hat f$. 
\end{proof}

Unfortunately, this function $\hat f$ does not preserve the positivity of the functions. In fact, when $\psi\in X^+$, 
$-\Xi\left(\Lambda(a\,,\psi_2(a))\right)\psi_1(a)\leq 0$ a.e.,
therefore, in general, $\hat f(\psi)\notin X^+$.
To overcome this problem, we will add to the nonlinear term and subtract from the linear term the same linear operator, and we obtain the following problem equivalent to \eqref{eq:IVP-f-hat}: 
\begin{equation}
    \label{astratta2}
    \left\{
    \begin{split}
      & x'(t)=\mathcal{A}x(t)+\mathcal{F}(x(t)) \\
      & x(0)=\hat x
    \end{split}
    \right.
\end{equation}
 $t\in[0\,,T]$, $x:[0\,,T]\to L^1\left([0\,,\omega]\,,\mathbb{R}^3\right)$,
  $x_0$ as in \eqref{eq:x0}
 \[
  \mathcal{A}=A-c\,\mathbb{I}
 \]
and
\[
\mathcal{F}=\hat f+c\,\mathbb{I}
\]
where $\hat f$ is defined in \eqref{fnuova} and $c$ is the constant in \eqref{sublinear}. In \eqref{astratta2}, $\mathcal{A}$ generates a strongly continuous, positive semigroup $(\mathscr{S}(t))_{t\geq 0}$, $\mathscr{S}(t)=\e^{-ct}S(t)$  (see Proposition \ref{p:semigroup1} and  Proposition \ref{p:semigroup2}) and, by \eqref{e:costante S}, 
 \begin{equation}
     \label{m}
     \|\mathscr{S}(t)\|\leq L
     \qquad \forall t\in[0\,,T].
 \end{equation}
Moreover, by the definition of $\Xi$, for every $\psi\in X^+$
\[
\left\{
\begin{split}
  &  \left(c-\Xi\left(\Lambda(a\,,\psi_2(a))\right)\right)
    \psi_1(a)\geq 0 \\
  &  c\psi_2(a)+\Xi\left(\Lambda(a\,,\psi_2(a))\right)
    \psi_1(a) \geq 0  \\
  &  c\psi_3(a)\geq 0
\end{split}
\qquad\quad \text{for a.e.\ $a\in[0\,,\omega]$,}
\right.
\]
so that $\mathcal{F}(\psi)\in X^+$. According to \eqref{lipschitz}, $\mathcal{F}$ is locally lipschitz-continuous and
  \begin{equation}
\label{lipschitz-F}
\|\mathcal{F}(\psi)-\mathcal{F}(\varphi)\|_1\leq 
(C_\rho+c)\|\psi-\varphi\|_1,  
\end{equation}
for every $\psi,\varphi\in L^{1}\left([0\,,\omega]\,,\mathbb{R}^3\right)$, $\|\psi\|_1\leq \rho$, 
and by \eqref{sublinear}  also the sublinearity of the nonlinear term is preserved:
\begin{equation}
    \label{sublinearF}
    \|\mathcal{F}(\psi)\|_1
    \leq 
    3c\,\|\psi\|_1,
\end{equation}
for every $\psi\in L^{1}\left([0\,,\omega]\,,\mathbb{R}^3\right)$. 
\par
We recall that a function $x\in\mathcal{C}\left([0\,,T]\,,L^{1}\left([0\,,\omega] \,,\mathbb{R}^3\right)\right)$ is a mild solution of \eqref{astratta2} if
\begin{equation*}
  x(t) =\mathscr{S}(t)\hat x + \int_0^t \mathscr{S}(t-s) \mathcal{F}(x(s))\, ds ,
  \qquad t\in[0\,,T].
\end{equation*}
By Proposition~\ref{p:equivalenza} the mild solutions of \eqref{astratta2} are the mild solutions of \eqref{eq:IVP-f-hat}.
\par
In the following result, we prove the existence, uniqueness, continuous dependence on initial data and regularity of nonnegative {\it mild solutions} of the models \eqref{eq:IVP-f-hat}.

\begin{theorem}
\label{t:modificato}
For every initial condition $\hat x=(\hat s\,, \hat i\,, \hat r)\in X^+$, $\|\hat x\|_1<\|x_0\|_1+1$, the Cauchy problem \eqref{eq:IVP-f-hat} 
\begin{itemize}
    \item admits a unique mild solution $x\in\mathcal{C}\left([0\,,T]\,,L^{1}\left([0\,,\omega] \,,\mathbb{R}^3\right)\right)$ with nonnegative components;
    \item if $\hat x\in D(A)$, the mild solution is a classical solution;
    \item if $\{\hat x^j\}_j\subset L^{1}\left([0\,,\omega] \,,\mathbb{R}^3\right)$ is a sequence of nonnegative initial data, $\|\hat x_j\|_1< \|x_0\|_1+1$, converging to $\hat x$, therefore the corresponding mild solutions converge uniformly to $x$. 
\end{itemize}
\end{theorem}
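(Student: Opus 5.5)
Existence will come from Theorem~\ref{punto fisso condensante}, applied to the modified problem \eqref{astratta2}. By Proposition~\ref{p:equivalenza}, its mild solutions are exactly those of \eqref{eq:IVP-f-hat}. We work in $E=\mathcal{C}([0\,,T]\,,X)$ with the closed convex cone
\[
K=\{x\in E:\ x(t)\in X^+\ \text{for all } t\in[0\,,T]\},
\]
and define the single-valued family
\[
\T(x\,,\lambda)(t)=\mathscr{S}(t)\hat x+\lambda\int_0^t\mathscr{S}(t-s)\mathcal{F}(x(s))\,ds .
\]
Since $\hat x\in X^+$, $\mathscr{S}(t)$ is positive (Proposition~\ref{p:semigroup2}) and $\mathcal{F}(X^+)\subset X^+$, we get $\T(K\times[0\,,1])\subset K$.

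The value $\T(\cdot\,,0)\equiv \mathscr{S}(\cdot)\hat x=:x^*$ is constant, so hypothesis (5) holds once $x^*\in U_K$. Hypotheses (1)--(2) are immediate: the values are singletons, and continuity in $(x,\lambda)$ follows from the local Lipschitz bound \eqref{lipschitz-F} and from \eqref{m}.

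For the a priori bound (4), take $U=\{x\in E:\ \|x\|_\infty<R\}$. If $x=\T(x\,,\lambda)$, then \eqref{m} and \eqref{sublinearF} give
\[
\|x(t)\|_1\le L\|\hat x\|_1+3cL\int_0^t\|x(s)\|_1\,ds,
\]
and Gronwall yields $\|x(t)\|_1\le L(\|x_0\|_1+1)\e^{3cLT}$. Choosing $R$ larger than this bound (and larger than $L(\|x_0\|_1+1)\ge\|x^*\|_\infty$) makes every fixed point, for every $\lambda$, lie in $U_K$. This is possible precisely because $\Xi$ makes $\hat f$ globally sublinear.

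For the condensing property (3) I would use the m.n.c.\ $\nu$ from \eqref{nu} with $N$ large, and I expect this to be the main technical point. Take $\Omega\subset\overline U_K$ with $\nu(\T(\Omega\times[0\,,1]))\ge\nu(\Omega)$ and a sequence $\{x_n\}\subset\Omega$. Since $\mathcal{F}$ is Lipschitz on the ball of radius $R$ with constant $C_R+c$, Proposition~\ref{prop:2:chi}(v) gives $\chi(\{\mathcal{F}(x_n(s))\})\le (C_R+c)\chi(\{x_n(s)\})$. Theorem~\ref{thm:2:KOZ} ($X$ is separable) together with Proposition~\ref{prop:2:chi}(vi), used to absorb the factor $\lambda$, then gives
\[
\e^{-Nt}\chi(\{\T(x_n,\lambda_n)(t)\})\le L(C_R+c)\int_0^t \e^{-N(t-s)}\,ds\;\sup_s \e^{-Ns}\chi(\{x_n(s)\})\le \frac{L(C_R+c)}{N}\,\varphi(\Omega),
\]
where $\varphi$ denotes the first component of $\nu$. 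Taking $N>L(C_R+c)$ forces $\varphi(\Omega)=0$.

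The equicontinuity component of $\nu$ still has to be handled. Once $\chi(\{x_n(s)\})=0$, the integrands $\mathcal{F}(x_n(s))$ form relatively compact sets for each $s$ and are bounded. The standard argument of \cite[Thm.~4.2.2]{KOZ}, combined with strong continuity of $\mathscr S$, then shows that the integral operator sends such sequences into equicontinuous families, so $\mc=0$ and $\Omega$ is relatively compact. Applying Theorem~\ref{punto fisso condensante} gives a fixed point of $\T(\cdot\,,1)$ in $\overline U_K$, that is, a nonnegative mild solution.

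Uniqueness and continuous dependence both follow from the same Gronwall estimate. Let $x,y$ be mild solutions with data $\hat x,\hat y$; both are bounded by $R$, and $\mathcal F$ is $(C_R+c)$-Lipschitz on that ball, so
\[
\|x(t)-y(t)\|_1\le L\|\hat x-\hat y\|_1+L(C_R+c)\int_0^t\|x(s)-y(s)\|_1\,ds .
\]
Hence $\|x-y\|_\infty\le L\e^{L(C_R+c)T}\|\hat x-\hat y\|_1$. With $\hat x=\hat y$ this is uniqueness; applied to $\hat x^j\to\hat x$, with $R$ uniform in $j$ since $\|\hat x^j\|_1<\|x_0\|_1+1$, it gives uniform convergence of the solutions. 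Finally, for $\hat x\in D(A)$, Proposition~\ref{fcontinua} makes $\hat f$ continuously differentiable. Theorem~\ref{t:regularity} then gives a local classical solution, which by uniqueness coincides with our global mild solution. The a priori bound lets us continue it step by step to all of $[0\,,T]$, so the mild solution is classical.
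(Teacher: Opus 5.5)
Your proposal is correct and follows essentially the same route as the paper. It uses the same homotopy $\T(\cdot\,,\lambda)$ on a ball intersected with the cone $K$, the same Gr\"onwall a priori bound for hypothesis (4), the same $\nu$-condensing estimate with $N>L(C_R+c)$, and the same Lipschitz--Gr\"onwall estimate for continuous dependence. The equicontinuity step is what the paper obtains from \cite[Theorem~5.1.1]{KOZ} on semicompact sequences, which is the right reference there rather than Theorem~4.2.2. The only real deviation is that you derive uniqueness directly from the Gr\"onwall estimate via the global a priori bound, instead of quoting the local uniqueness in Theorem~\ref{t:regularity}; both are valid.
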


\begin{proof}
\textbf{Existence}.
By the previous remark, it is equivalent to show that \eqref{astratta2} admits a mild solution.
 For
\[
R= L\,(\|x_0\|_1+1)\,\e^{3\,c\,L\,T}, 
\]
where $L$ and $c$ are the constants in \eqref{m} and \eqref{sublinear} respectively, let us consider the following subsets of $\mathcal{C}\left([0\,,T]\,,L^{1}\left([0\,,\omega] \,,\mathbb{R}^3\right)\right)$:
\par
the open set defined by
\[
Q=\left\{q \in \mathcal{C}\left([0\,,T]\,,L^{1}\left([0\,,\omega] \,,\mathbb{R}^3\right)\right):\, \sup_{0\leq t\leq T}\|q(t)\|_1<R \right\};
\]
 the closed and convex cone defined by
\[
K=\left\{q \in \mathcal{C}\left([0\,,T]\,,L^{1}\left([0\,,\omega] \,,\mathbb{R}^3\right)\right):\, q(t)\in X^+ \,\,, \forall t\in[0\,,T] \right\};
\]
and the set
\[
{Q_K}={Q\cap K}
\]
with the topology induced by $K$ (see Section~\ref{s:notations} for the definition of $X^+$). As in Theorem~\ref{punto fisso condensante}  we denote with $\overline{Q}_K$ and $\partial Q_K$ the closure and the boundary of $Q_K$ in the relative topology.
\par
The mild solutions of \eqref{astratta2} in $K$ are fixed points of the solution operator $\mathcal{T}(\cdot\,,1)$, where
\[\mathcal{T}: \overline{Q}_K
\times[0\,,1]\to K
\]
is defined by
\[
\mathcal{T}(q\,,\lambda)(t)=\mathscr{S}(t)\hat x + \lambda\int_0^t\mathscr{S}(t-s)\mathcal{F}(q(s))\,ds,
\qquad t\in[0\,,T].
\]
 Notice that the images of $\mathcal{T}$ are in $K$ since $\mathscr{S}(t)$ and $\mathcal{F}$ preserve the positivity and $\hat x \in X^+$.
\par
We will prove that $\mathcal{T}$ satisfies all the assumptions of Theorem~\ref{punto fisso condensante} and therefore that $\mathcal{T}(\cdot\,,1)$ has a fixed point.
\par
\medskip
(1)\,\, $\mathcal{T}$ is single-valued, therefore its images are compact and convex sets.
\par
\medskip
(2)\,\, By \eqref{m} and \eqref{lipschitz-F},  for every $q,q_0\in \mathcal{C}\left([0\,,T]\,,L^{1}\left([0\,,\omega] \,,\overline{Q}_K\right)\right)$ and $\lambda, \lambda_0\in [0\,,1]$ we have
\[
\begin{split}
 \| \mathcal{T} (q\,,\lambda)
 & -\mathcal{T}(q_0\,,\lambda_0)\| \\
 & = \left\|\lambda\int_0^t\mathscr{S}(t-s)\mathcal{F}(q(s))\,ds-\lambda_0\int_0^t\mathscr{S}(t-s)\mathcal{F}(q_0(s))\,ds\right\| \\
& \leq |\lambda-\lambda_0|\int_0^T\left\| \mathscr{S}(t-s)\mathcal{F}(q(s))\,ds\right\|_1 \\
&\quad + |\lambda_0| \int_0^T\left\| \mathscr{S}(t-s)\left(\mathcal{F}(q(s))-\mathcal{F}(q_0(s))\right)\right\|_1\,ds \\
& \leq 3LcRT|\lambda-\lambda_0|+L \int_0^T\left\| \mathcal{F}(q(s))-\mathcal{F}(q_0(s))\right\|_1\,ds  \\
& \leq 3LcRT|\lambda-\lambda_0|
 +L (C_R+c) \int_0^T\left\| q(s)-q_0(s)\right\|_1\,ds  \\
& \leq 3LcRT|\lambda-\lambda_0|+L T(C_R+c)\|q-q_0\| \\
& \leq L T(3cR+C_R+c)\left(|\lambda-\lambda_0|+\|q-q_0\|\right).
\end{split}
\]
Therefore $\mathcal{T}$ is Lipschitz-continuous and assumptions (2) in Theorem~\ref{punto fisso condensante} holds.

\par
\medskip
(3)\,\, Let $\nu$ be the m.n.c.\ on the subsets of $\mathcal{C}\left([0\,,T]\,,L^{1}\left([0\,,\omega] \,,\mathbb{R}^3\right)\right)$ defined by \eqref{nu} with  $N>L(C_R+c)$.
We have to prove that  $\mathcal{T}$ is $\nu$-condensing (Definition~\ref{d:condensante}).
\par
Let $\Omega\subset\overline{Q}_K$  be such that
\begin{equation}
\label{condT}
\nu\left(\mathcal{T}(\Omega\times[0\,,1])\right)\geq\nu(\Omega),
\end{equation}
We aim to prove that $\Omega$ is relatively compact.
By  \eqref{nu} there exists a sequence $\{x_n\}_n\subset \T(\Omega\times[0\,,1])$ such that
\begin{displaymath}
\nu\left(\T(\Omega\times[0\,,1])\right)=\left(\sup_{t\in[0\,,T]}\e^{-Nt}\chi\left( \{x_n(t)\}_n\right)\,,\mc(\{x_n\}_n) \right).
\end{displaymath}
Therefore for every $\{w_n\}_n\subset\Omega$
\begin{equation}\label{massimo}
  \left\{
\begin{aligned}
\phantom{.}& \sup_{t\in[0\,,T]}\e^{-Nt}\chi\left( \{x_n(t)\}_n\right) \geq\sup_{t\in[0\,,T]}\e^{-Nt}\chi\left( \{w_n(t)\}_n\right) \\
\phantom{.}& \mc(\{x_n\}_n)\geq  \mc(\{w_n\}_n)  .
\end{aligned}\right.
\end{equation}
By the definition of $\T$, for every $n\in\mathbb{N}$ there exist $q_n\in \Omega$ and $\lambda_n\in[0\,,1]$ such that
\begin{displaymath}
   x_n(t) = \mathscr{S}(t) \hat x + \lambda_n\int_0^t \mathscr{S}(t-s) \mathcal{F}(q_n(s)) ds, \quad t\in[0\,,T].
\end{displaymath}
Applying  Proposition~\ref{prop:2:chi}(v), \eqref{m} and \eqref{lipschitz-F}, and recalling that $\|q_n(t)\|_1\leq R$ for every $t\in[0\,,T]$, we have that 
\begin{equation}
\label{ChiF}
\begin{split}
   \chi\left(\left\{\mathcal{F}(q_n(s)) \right\}_n\right)
  & \leq (C_R+c)\, \chi\left(\left\{q_n(s))\right\}_n\right)\\
  & =(C_R+c)\,\e^{Ns}\e^{-Ns} \chi\left(\left\{q_n(s))\right\}_n 
      \right)  \\
  & \leq (C_R+c)\,\e^{Ns}\sup_{\tau\in[0\,,T]}\e^{-N\tau} \chi\left(\left\{q_n(\tau)) \right\}_n \right)
\end{split}
\end{equation}
for every $s\in[0\,,t]$. Therefore, by Proposition~\ref{prop:2:chi}, Theorem~\ref{thm:2:KOZ}, \eqref{m} and \eqref{ChiF} we obtain
\begin{displaymath}
 \begin{split}
   \chi\left(\{x_n(t)\}_n\right)
   & \leq  \chi\left(\bigcup_{\lambda\in[0\,,1]}\lambda
     \left\{\int_0^t \mathscr{S}(t-s) \mathcal{F}(q_n(s))\, ds\right\}_n\right)\\
   & = \chi\left(\left\{\int_0^t \mathscr{S}(t-s) \mathcal{F}(q_n(s))\,ds\right\}_n\right)\\
   & \leq L (C_R+c)\int_0^t \e^{Ns}\,ds\sup_{\tau\in[0\,,T]}\e^{-N\tau} \chi\left(\left\{q_n(\tau))\right\}_n\right)\\
   & \leq \frac{L (C_R+c)}{N} \,\e^{Nt}\sup_{\tau\in[0\,,T]}\e^{-N\tau} \chi\left(\left\{q_n(\tau))\right\}_n\right)
 \end{split}
\end{displaymath}
For every $t\in[0\,,T]$. Using \eqref{massimo} we deduce that
\begin{displaymath} 
\begin{split}
 \sup_{t\in[0\,,T]}\e^{-Nt}\chi\left(\{x_n(t)\}_n\right)
 & \leq \frac{L (C_R+c)}{N} \sup_{\tau\in[0\,,T]}\e^{-N\tau} \chi\left(\left\{q_n(\tau))\right\}_n\right)\\
 & \leq \frac{L (C_R+c)}{N} \sup_{t\in[0\,,T]}\e^{-Nt}\chi 
   \left(\{x_n(t)\}_n\right) .
 \end{split}
\end{displaymath}
Since $\frac{L(C_R+c)}{N} <1$, the last inequality holds  if and only if
\begin{equation}
\label{Chinulla}
    \sup_{t\in[0\,,T]}\e^{-Nt}\chi\left(\{x_n(t)\}_n\right) 
    =\sup_{t\in[0\,,T]}\e^{-Nt}\chi\left(\{q_n(t)\}_n\right) =0
\end{equation}
\par
By \eqref{sublinearF}, $\mathcal{F}(q_n(\cdot))$ are uniformly bounded in $[0\,,T]$
\[
\|\mathcal{F}(q_n(t))\|_1\leq3\,c\|q_n(t)\|_1\leq 3\, cR
\]
for every $n\in\mathbb{N}$ and $t\in[0\,,T]$.
Moreover, by \eqref{ChiF},
$\chi\{\mathcal{F}(q_n(t))\}_n=0$ for every $t\in[0\,,T]$, hence, by \cite[Theorem~5.1.1]{KOZ}, $\{x_n\}_n$ is relatively compact in $\mathcal{C}([0\,,T]\,,X)$. We conclude that, by the Ascoli-Arzelà Theorem in abstract spaces, $\{x_n\}_n$ is also  equicontinuous. Therefore
\begin{displaymath}
\nu(\Omega)\leq
\nu\left(\T(\Omega\times[0\,,1])\right)=\left(\sup_{t\in[0\,,T]}\e^{-Nt}\chi\left( \{x_n(t)\}_n\right)\,,\mc(\{x_n\}_n) \right)=0,
\end{displaymath}
proving that $\Omega$ is relatively compact in $\mathcal{C}\left([0\,,T]\,,L^{1}\left([0\,,\omega] \,,\mathbb{R}^3\right)\right)$. Therefore also assumption (3) of Theorem~\ref{punto fisso condensante} is proved.
\par
\medskip
(4)\,\, Let $q_0\in \overline{Q}_K$ and $\lambda_0 \in[0\,,1)$ be such that $q_0\in\T(q_0\,,\lambda_0)$:
\begin{equation}\label{q_0}
 q_0(t)=\mathscr{S}(t)\hat x + \lambda_0\int_0^t \mathscr{S}(t-s) \mathcal{F}(q_0(s))\, ds,
 \quad
 t\in[0\,,T].
\end{equation}
We have to prove that $q_0\notin\partial Q_K$, i.e.\ 
$\|q_0(t)\|_1<R$ for every $t\in [0\,,T]$.
\par
By \eqref{m}, \eqref{sublinearF} and  \eqref{q_0}, we have
\[
 \begin{split}
  \|q_0(t)\|_1
  & \leq \|\mathscr{S}(t) \hat x\|_1 + \lambda_0\int_0^t \|\mathscr{S}(t-s) \mathcal{F}(q_0(s))\|_1\, ds\\
  & \leq L\,\|\hat x \|_1+\int_0^t 3\,c\,L\,\|(q_0(s))\|_1\, ds
 \end{split} 
\]
for every $t\in[0\,,T]$. Therefore, by Gr\"onwall's inequality
\[
  \|q_0(t)\|_1\leq  L\,\|\hat x\|_1\,\e^{\int_0^T 3\,c\,L\, ds}
  < L\,(\|x_0\|_1 +1)\,\e^{3\,c\,L\,T}=R
\]
for every $t\in[0\,,T]$, so also assumption (4) of Theorem~\ref{punto fisso condensante} is satisfied.

\par
\medskip
(5)\,\, Finally, for every $t\in[0\,,T]$

\[
\|\mathcal{T}(t\,,0)\|_1=\|\mathscr{S}(t)\hat x\|_1< L(\|x_0\|_1+1)<R,
\]
therefore  $\mathcal{T}(\cdot\,,0)=\mathscr{S}(t)\hat x\in Q_K$.
\par
All the assumptions of Theorem~\ref{punto fisso condensante} are satisfied, so we conclude that $\mathcal{T}(\cdot\,,1)$ has a fixed point, a nonnegative mild solution $x$ of \eqref{astratta2} satisfying $\| x(t)\|\le R$ for all $t\in [0, T]$ with $R$ defined at the beginning of this proof. Recall that $x$ is also a mild solution to \eqref{eq:IVP-f-hat}.
\par\medskip
\textbf{Uniqueness and regularity}. Since $\hat f$ is continuously differentiable, by Theorem~\ref{t:regularity} the mild solution of \eqref{eq:IVP-f-hat} is unique and it is a classical solution if $\hat x=(\hat s\,,\hat i\,, \hat r)\in D(A)$.
\par\medskip
\textbf{Continuous dependence on the initial data}. Let 
$\{\hat x^j\}_j\subset L^{1}\left([0\,,\omega] \,,\mathbb{R}^3\right)$ be a sequence of initial data converging to  $\hat x$. 
\par
Denote by $x^j\in \mathcal{C}\left([0\,,T]\,,L^{1}\left([0\,,\omega] \,,\mathbb{R}^3\right)\right)$ the unique solution of \eqref{eq:IVP-f-hat} that satisfies the initial condition $x^j(0)=\hat x^j$. We claim that the sequence $\{x^j\}_j$ converges uniformly to $x$, solution of \eqref{eq:IVP-f-hat}.

\par
By \eqref{m} and \eqref{lipschitz-F}, for every $t\in[0\,,T]$, we can estimate
\begin{equation}
\label{dipendenza continua}
\begin{split}
 \|x(t)-x^j(t)\|_1
  & \leq \|\mathscr{S}(t)(\hat x-\hat x^j)\|_1 + \int_0^t \|\mathscr{S}(t-s) \left(\mathcal{F}(x(s))-\mathcal{F}(x^j(s))\right)\|_1\, ds\\
 & \leq L \|\hat x-\hat x^j\|_1+L(C_R+c)\int_0^t\|x(s)-x^j(s)\|_1\,ds
\end{split}
\end{equation}
so that, by Gr\"onwall's inequality,
\[
   \sup_{t\in[0\,,T]}\|x(t)-x^j(t)\|_1\leq  L\,\|\hat x-\hat x^j\|_1\,\e^{LT(C_R+c)}\to 0
\]
as $j\to\infty$, proving the claim.
\end{proof}

Now we are able to prove the existence result for the initial SIRS model \eqref{modello}.

\begin{theorem}
\label{t:SIRS} The problem \eqref{modello}-\eqref{e:IC}-\eqref{BC}, under conditions  {\rm (H1)}, {\rm (H2}), {\rm(H3)} and \eqref{eq:x0} admits a unique mild solution in $\mathcal{C}\left([0\,,T]\,,L^{1}\left([0\,,\omega] \,,\mathbb{R}^3\right)\right)$ with nonnegative components. The solution depends continuously on the initial data.  Moreover, if $x_0\in D(A)$, the mild solution is a classical solution.
\end{theorem}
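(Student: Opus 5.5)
The plan is to deduce Theorem~\ref{t:SIRS} from Theorem~\ref{t:modificato}, applied with $\hat x=x_0$. The key step is showing that the truncation $\Xi$ is never active along the nonnegative solution. Theorem~\ref{t:modificato} with $\hat x = x_0\in X^+$ (clearly $\|x_0\|_1<\|x_0\|_1+1$) gives a unique nonnegative mild solution $x=(s,i,r)$ of \eqref{eq:IVP-f-hat}. It is a classical solution if $x_0\in D(A)$, and it depends continuously on the initial data. It then suffices to show that $\hat f(x(t))=f(x(t))$ for all $t\in[0,T]$. In that case $x$ is a mild solution of \eqref{eq:2:CP}, since the two mild-solution formulas coincide.

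To verify $\hat f(x(t))=f(x(t))$, I first take $x_0\in D(A)$, so that $x$ is classical. Summing the three components, the total population $n=s+i+r$ is a classical solution of \eqref{equazione-n}. Indeed, the column sums of $\mathcal G$ vanish, the first two components of $\hat f$ cancel, and the column sums of $B$ equal $\beta$. By \eqref{e:n-stima} and nonnegativity,
\[
\|i(t)\|_1\le \|n(t)\|_1\le M\|n_0\|_1=M\|x_0\|_1 .
\]
Hence, by \eqref{Lambda}, $|\Lambda(a,i(t))|\le \|k\|_\infty M\|x_0\|_1$, which lies below the threshold at which $\Xi$ is the identity. So $\hat f(x(t))=f(x(t))$ and $x$ is a classical solution of \eqref{eq:2:CP}.

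For general $x_0\in X^+$, I approximate $x_0$ by $x_0^j\in D(A)\cap X^+$ with $\|x_0^j\|_1<\|x_0\|_1+1$. Density of $D(A)\cap X^+$ in $X^+$ should follow as in Proposition~\ref{p:dense}; alternatively one can take $x_0^j=jR(j,A_1+A_2)x_0$, which is positive by Proposition~\ref{p:semigroup2}'s argument and converges to $x_0$. The continuous dependence part of Theorem~\ref{t:modificato} gives $x^j\to x$ uniformly. Each $x^j$ satisfies $\|n^j(t)\|_1\le M\|x^j_0\|_1$, and passing to the limit gives $\|i(t)\|_1\le M\|x_0\|_1$. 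As before, $\Xi$ acts as the identity along $x$, so $x$ is a nonnegative mild solution of \eqref{eq:2:CP}.

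Uniqueness follows from Theorem~\ref{t:regularity}, since $f$ is $C^1$ and hence locally Lipschitz, so a Gronwall argument on $[0,T]$ applies. Continuous dependence follows as in \eqref{dipendenza continua}. For data $x_0^j\to x_0$ in $X^+$, eventually $\|x_0^j\|_1<\|x_0\|_1+1$, so the solutions are those of \eqref{eq:IVP-f-hat} with the same $\Xi$, and Theorem~\ref{t:modificato} applies. The main obstacle is the approximation step: ensuring that \eqref{e:n-stima} legitimately transfers to mild solutions. This requires a positivity-preserving dense approximation in $D(A)$. I would first try the resolvent approximation $jR(j,A)x_0$ of $x_0$, and check that its norm stays below $\|x_0\|_1+1$ for large $j$.
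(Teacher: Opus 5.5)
Your proposal is correct and follows the paper's proof essentially step by step: solve \eqref{eq:IVP-f-hat} with $\hat x=x_0$ via Theorem~\ref{t:modificato}, show the truncation $\Xi$ is inactive using \eqref{e:n-stima} first for data in $D(A)\cap X^+$ and then for general $x_0$ by approximation and continuous dependence, and obtain uniqueness and regularity from Theorem~\ref{t:regularity}. Two of your details are slightly more careful than the paper: the resolvent approximation $jR(j,A_1+A_2)x_0$ actually justifies the density of $D(A)\cap X^+$ in $X^+$, which the paper only asserts, and your limiting bound $|\Lambda(a,i(t))|\le \|k\|_\infty M\|x_0\|_1$ is non-strict, which is all that passing to the limit yields.
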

\begin{proof} Let $x\in \mathcal{C}\left([0\,,T]\,,L^{1}\left([0\,,\omega] \,,\mathbb{R}^3\right)\right)$ be the unique solution of \eqref{eq:IVP-f-hat} with initial data $\hat x=x_0$ (see Theorem ~\ref{t:modificato}).
We claim that 
\begin{equation}\label{eq:claim}
f(x(t))=\hat f(x(t))\qquad\text{for every }t\in[0\,,T]. 
\end{equation} 
 hence $x$ is a solution of \eqref{eq:2:CP}, the equivalent abstract formulation of \eqref{modello}--\eqref{e:IC}-\eqref{BC}.

\par
In fact, if $x_0=(s_0\,, i_0\,, r_0)\in D(A)\cap X^+$, then the nonnegative solution $x=(s\,,i\,,r)$ of \eqref{eq:IVP-f-hat} is classical and $n=s+i+r$ satisfies \eqref{e:n-stima} with $n_0=s_0+i_0+r_0$. Since $s_0\,, i_0\,, r_0$ are non negative
\[
\|n_0\|_1=\|s_0+i_0+r_0\|_1=\|s_0\|_1+\|i_0\|_1+\|r_0\|_1
= \|x_0\|_1
\]
and, by \eqref{Lambda},
\[
    |\Lambda(a\,,i(t))|\leq \|k\|_\infty \|i(t)\|_1\leq \|k\|_\infty \|n(t)\|_1\leq  \|k\|_\infty M \|x_0\|_1
    < \|k\|_\infty M \left(\|x_0\|_1+1\right),
\]
for a.e.~$a\in[0\,,\omega]$ and for every $t\in[0\,,T]$.  

\par
Assume now that $x_0=(s_0\,, i_0\,, r_0)\in X^+$ is an arbitrary initial condition. Since $D(A)\cap X^+$ is dense in $X^+$, there is a sequence $\hat x^j=(\hat s^j\,,\hat i^j\,,\hat r^j)\in D(A)\cap X^+$ converging to $x_0$ in $L^1([0\,,\omega]\,,\mathbb{R}^3)$. Without loss of generality, we can take $\|\hat x^j\|<\|x_0\|+1$. The solutions of the corresponding Cauchy problems \eqref{eq:IVP-f-hat}, $x^j=(s^j,i^j,r^j)$, are classical (see Theorem~\ref{t:modificato}) and
\[
    |\Lambda(a\,,i^j(t))|\leq \|k\|_\infty M \|\hat x^j\|_1
    <\|k\|_\infty M(\|x_0\|_1+1),
\]
for all~$j$, a.e.~$a\in[0\,,\omega]$ and every $t\in[0\,,T]$.
 In Theorem~\ref{t:modificato} we proved that $\{x_j\}_j$ converges to $x$ in $L^{1}\left([0\,,\omega] \,, \mathbb{R}^3\right)$, therefore, passing to the limit, we obtain that every  mild solution of \eqref{eq:IVP-f-hat} satisfies
 \[
    |\Lambda(a\,,i(t))|
    <\|k\|_\infty M(\|x_0\|_1+1),
\]
for  a.e.~$a\in[0\,,\omega]$ and every $t\in[0\,,T]$.
\par 
By the last inequality and the definition of $\hat f$ we have
\[ 
 f(x(t))=\hat f(x(t))\qquad\text{for every }t\in[0\,,T],
\]
so we conclude that  $x$  is also  a solution of \eqref{eq:2:CP}. 
\par
Moreover, by Theorem~\ref{t:regularity} the nonnegative solutions of \eqref{modello} are all and only those of \eqref{eq:IVP-f-hat}.
Therefore the proof follows from Theorem~\ref{t:modificato}.  \end{proof}

%%%%%%%%%%%%%%%%%%%%%%%%%%%%%%%%%%%%%%%%%%%%%%%%%%%%%%%%%%%%%%%%%%%%%%%%%%%%%%%%%%%%%%%%%%%%%%
%            SEZIONE 5             %
%%%%%%%%%%%%%%%%%%%%%%%%%%%%%%%%%%%%%%%%%%%%%%%%%%%%%%%%%%%%%%%%%%%%%%%%%%%%%%%%%%%%%%%%%%%%%%

\section{A nonlinear, time-dependent, force of infection }\label{s:nonlinear force}

\bigskip
\noindent 
In this section, we assume a time-dependent force of infection as in \eqref{e:ell}, which is no longer linear with respect to the number of infected individuals. Hence we consider  a model (see~\eqref{modello nonlin}) that is  more general than~\eqref{modello}. However, in this more general case, the same topological method applied in Section~\ref{s:existence} ensures the unique solvability of~\eqref{modello nonlin} with the given initial and boundary conditions (see Theorem~\ref{t:SIRS2}).
\begin{equation}
\label{modello nonlin}
\left\{
\begin{split}
 & s_t(a\,,t)+s_a(a\,,t)+\mu(a)s(a\,,t)=-\tilde\Lambda(t\,,a\,,i(\cdot\,,t))s(a\,,t)+\delta(a)i(a\,,t)\\
 & i_t(a\,,t)+i_a(a\,,t)+\mu(a)i(a\,,t)=\tilde\Lambda(t\,,a\,,i(\cdot\,,t))s(a\,,t)-(\delta(a)+\gamma(a)))i(a\,,t)\\
 & r_t(a\,,t)+r_a(a\,,t)+\mu(a)r(a\,,t)=\gamma(a)i(a\,,t)
\end{split}
\right.
\end{equation}
where
\begin{equation}\label{e:g1}
\tilde\Lambda(t\,,a\,,i(\cdot\,,t))=
\ell\left(t\,,a\,, \int_0^\omega k(a\,,\sigma)i(\sigma\,,t)\,d\sigma\right)=\ell\left(t,a,\Lambda(a,i(\cdot, t)\right)
\end{equation}
(with $\Lambda$ as in Section~\ref{s:existence}) and the map $\ell \colon [0\,,T]\times [0\,, \omega]\times \mathbb{R} \to \mathbb{R}$ satisfying
\begin{equation}
\tag{$\ell$} 
\begin{split} 
& \ell \in C^1([0\,,T]\times [0\,, \omega]\times \mathbb{R} );\\
& \ell(t,a,0)=0, \quad\text{for all $(t,a)$}; \\
& \ell(t,a,y)\geq 0, \quad\text{for all $(t,a)$ and for all $y>0$.}
\end{split}
\end{equation} 
%\begin{itemize}
% \item[($h_1$)] $h(\cdot\,,\cdot\,,x)$ is measurable %for 
 %     every $x\in\mathbb{R}$;     
 %\item[($h_2$)] $h(t\,,a\,,0)=0$  for a.e. $(t\,,a)\in 
  %    [0\,,T]\times [0\,, \omega]$;
% \item[($h_3$)] for every $r>0$ there exists $H_r>0$ %such 
 %     that  
  %    \[\vert h(t\,,a\,, x_1)- h(t\,,a\,, x_2)\vert \le %H_r\vert x_1-x_2\vert,
%      \]
 %     for a.e. $(t\,,a)\in [0\,,T]\times [0\,, \omega]$ and for every $x_1,x_2\in [-r, r]$.
%\end{itemize}
\begin{remark} Notice that,  by ($\ell$), the map $\ell$ is locally Lipschitz continuous. In particular, for every $r>0$, there exists $H_r>0$ such that  
\begin{equation}
    \label{e:lips-ell}
    \vert \ell(t\,,a\,, y_1)- \ell(t\,,a\,, y_2)\vert \le H_r\vert y_1-y_2\vert
\end{equation}
for every $(t\,,a)\in [0\,,T]\times [0\,, \omega]$ and for every $y_1,y_2\in [-r, r]$. Moreover,
\begin{equation}\label{e:stima-ell}
\vert \ell(t\,,a, y)\vert \le r{H}_r , 
\end{equation}
for every $(t\,,a\,,y)\in [0\,,T]\times [0\,, \omega]\times [-r, r]$.
\end{remark}
We introduce $\tilde h $ defined in $ [0,T]\times L^{1}\left([0\,,\omega] \,,\mathbb{R}^2\right)$ by 
\begin{equation}\label{e:tilde-h}
\tilde h(t\,,\psi_1\,, \psi_2)(a)=\ell\left(t\,,a\,, \Lambda(a, \psi_2)\right)\psi_1(a)
\end{equation}
for a.e.~$a\in [0\,,\omega]$.
\begin{proposition}
\label{p:C}
Let $\tilde h$ be the map defined in \eqref{e:tilde-h}; we have:
\begin{enumerate}
    \item $\tilde h(t\,,\psi_1\,, \psi_2)\in L^1(0\,,\omega)$, for every~$t\in [0\,,T]$ and every $\psi_1, \psi_2 \in L^1(0, \omega)$;
    \item  $\tilde h$ is continuously differentiable.
\end{enumerate}
\end{proposition}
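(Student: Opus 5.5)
The plan is to mimic the proof of Proposition~\ref{fcontinua}, with $\Xi$ replaced by the $C^1$ function $\ell$ and with the extra variable $t$. For item (1), I will fix $t\in[0\,,T]$ and $\psi_1,\psi_2\in L^1(0\,,\omega)$ and set $r:=\|k\|_\infty\|\psi_2\|_1$. By \eqref{Lambda}, $|\Lambda(a\,,\psi_2)|\le r$ for a.e.~$a$. The map $a\mapsto\Lambda(a\,,\psi_2)$ is measurable by Fubini, and $\ell$ is continuous, so $a\mapsto\ell(t,a,\Lambda(a,\psi_2))$ is measurable. By \eqref{e:stima-ell} it is bounded by $rH_r$. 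Therefore
\[
\|\tilde h(t,\psi_1,\psi_2)\|_1\le rH_r\|\psi_1\|_1<\infty .
\]

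For item (2), I will introduce the auxiliary map
\[
\tilde g:[0\,,T]\times L^1(0\,,\omega)\to L^\infty(0\,,\omega),\qquad \tilde g(t,\psi)(a)=\ell(t,a,\Lambda(a,\psi)).
\]
With it, $\tilde h(t,\psi_1,\psi_2)=\psi_1\,\tilde g(t,\psi_2)$, and the multiplication map $L^\infty\times L^1\to L^1$, $(g,\psi_1)\mapsto g\psi_1$, is bilinear and bounded. The candidate derivative of $\tilde g$ at $(t,\psi)$, applied to an increment $(\tau,\delta)$, is
\[
d\tilde g(t,\psi)(\tau,\delta)(a)=\partial_t\ell(t,a,\Lambda(a,\psi))\,\tau+\partial_y\ell(t,a,\Lambda(a,\psi))\,\Lambda(a,\delta).
\]
To show this is the Fr\'echet derivative, I will write the remainder via the mean value theorem in $(t,y)$. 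The arguments stay in the compact set $[0\,,T]\times[0\,,\omega]\times[-r',r']$, with $r'$ bounding $|\Lambda(a,\psi)|+|\Lambda(a,\delta)|$ for $\|\delta\|_1\le 1$. On that set $\partial_t\ell$ and $\partial_y\ell$ are uniformly continuous, so the remainder is $o(|\tau|+\|\delta\|_1)$ uniformly in $a$, that is, in the $L^\infty$ norm. The same uniform continuity gives continuity of $(t,\psi)\mapsto d\tilde g(t,\psi)$ in operator norm. This repeats the argument used for \eqref{e:diff-g}, now with the additional estimate
\[
\|\partial_t\ell(t_n,\cdot,\Lambda(\cdot,\psi_n))-\partial_t\ell(t,\cdot,\Lambda(\cdot,\psi))\|_\infty\to 0 .
\]

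Finally, the product rule gives
\[
d\tilde h(t,\psi_1,\psi_2)(\tau,\delta_1,\delta_2)=\tilde g(t,\psi_2)\,\delta_1+\psi_1\, d\tilde g(t,\psi_2)(\tau,\delta_2).
\]
Continuity of $d\tilde h$ then follows from the same three-term estimate used at the end of the proof of Proposition~\ref{fcontinua}, using continuity of $\tilde g$ into $L^\infty$ together with continuity of $d\tilde g$. I expect the main obstacle to be the uniformity in $a$ of the Taylor remainder, since $\ell$ also depends on $a$. This will be handled by compactness of $[0\,,T]\times[0\,,\omega]\times[-r',r']$ and uniform continuity of $\nabla\ell$ there. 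Note that only the $C^1$ hypothesis in ($\ell$) is needed; the sign conditions are not used here.
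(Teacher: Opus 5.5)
Your proposal is correct and follows the paper's proof essentially step by step: the same Fubini measurability argument and the bound from \eqref{e:stima-ell} for (1). For (2) it uses the same auxiliary map $(t,\psi)\mapsto \ell(t,\cdot,\Lambda(\cdot,\psi))$ into $L^\infty(0\,,\omega)$ (the paper's $\mathcal{L}$), the same derivative justified by uniform continuity of $\ell_t,\ell_y$ on compact sets, and the same product rule with the three-term continuity estimate from Proposition~\ref{fcontinua}. (Minor point: \eqref{e:stima-ell} itself relies on $\ell(t,a,0)=0$, but plain boundedness of the continuous $\ell$ on $[0\,,T]\times[0\,,\omega]\times[-r,r]$ suffices for (1), so your closing remark is right in substance.)
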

\begin{proof} (1) Let us consider $\psi_1, \psi_2 \in L^1[0, \omega]$.  By Fubini's theorem the map $a \longmapsto \Lambda(a\,,\psi_2)$ is mesurable. Therefore, for every $t\in[0\,,T]$, the map $a \longmapsto \ell(t\,,a,\Lambda(a\,,\psi_2))$ is measurable because composition of a continuous function with a measurable function. By setting 
\[
r:=\Vert k\Vert_{\infty}\Vert \psi_2\Vert_1,
\]
from \eqref{e:stima-ell} we get
\begin{equation}\label{e:stimaC}
\left \vert \tilde h(t\,,\psi_1, \psi_2)(a)\right\vert \le r H_r\vert \psi_1(a)\vert, \quad a \in [0, \omega],
\end{equation}
therefore $\tilde h(t\,,\psi_1, \psi_2)\in L^1(0, \omega)$ for  every $t\in[0\,,T]$. 
\par\smallskip
(2) Following the reasoning used in the proof of Proposition \ref{fcontinua}, the differentiability of 
$\tilde h$ can be shown by proving the continuous  differentiability of the  map $\mathcal{L}:[0\,,T]\times L^1(0\,,\omega)\to L^\infty(0\,,\omega)$ defined by
\[
\mathcal{L}(t\,,\psi)(a)=\ell\left(t\,,a\,, \Lambda(a, \psi)\right).
\]
Since $\ell_t$ and $\ell_y$ are uniformly continuous on compact sets, for every $t,\sigma\in[0\,,T]$ and for every
$\psi, \delta\in  L^1(0\,,\omega)$ we have
\[
\begin{split}
\mathcal{L}(t+\sigma\,,\psi+\delta)(a)-\mathcal{L}(t\,,\psi)(a)=\ell_t\left(t\,,a\,, \Lambda(a, \psi)\right)\sigma+
 \ell_y\left(t\,,a\,, \Lambda(a, \psi)\right)
 &\Lambda(a\,,\delta) \\
 + \varepsilon(t\,,\psi\,,&\sigma\,,\delta)(a)
\end{split}
\]
for a.e.~$a\in[0\,,\omega]$, where 
$\|\varepsilon(t\,,\psi\,,\sigma\,,\delta)\|_\infty=o(|\sigma|+\|\delta\|_1)$.
\par
The linear map
$d\mathcal{L}(t\,,\psi):\mathbb{R}\times L^1(0\,,\omega)\to L^\infty(0\,,\omega)$ defined by
\[
d\mathcal{L}(t\,,\psi)(\sigma\,,\delta)(a)
=\ell_t\left(t\,,a\,, \Lambda(a, \psi)\right)\sigma+
 \ell_y\left(t\,,a\,, \Lambda(a, \psi)\right)
 \Lambda(a\,,\delta) 
\]
is continuous; in fact
\[
\|d\mathcal{L}(t\,,\psi)(\sigma\,,\delta)\|_\infty
\leq L_1|\sigma|+L_2\|k\|_\infty\|\delta\|_1
\]
where, by \eqref{Lambda}, $L_1=\max\left\{\left|\ell_t(t\,,a\,,y):\,\,t\in[0\,,T],\,a\in[0\,,\omega],\, |y|\leq \|k\|_\infty\|\psi\|_1\right|\right\}$ and $L_2=\max\left\{\left|\ell_y(t\,,a\,,y):\,\,t\in[0\,,T],\,a\in[0\,,\omega],\, |y|\leq \|k\|_\infty\|\psi\|_1\right|\right\}$.
Therefore, $d\mathcal{L}$ is the differential of $\mathcal{L}$ at $(t\,,\psi)$. It remains to prove that $d\mathcal{L}$ depends continuously on $(t\,,\psi)$.
If $\left\{t_n\right\}_n$ is a sequence in $[0\,,T]$ converging to $t$ and $\left\{\psi_n\right\}_n$ is a sequence in $L^1(0\,,\omega)$ converging to $\psi$, then
\[
\sup_{a\in[0\,,\omega]}\left|\left(t_n\,,a\,, \Lambda(a, \psi_n)\right)-\left(t\,,a\,, \Lambda(a, \psi)\right)\right|
\leq |t_n-t|+k_\infty\|\psi_n-\psi\|_1\to 0
\]
as $n\to \infty$.  Therefore, from assumption ($\ell$), it follows that $d\mathcal{L}(t_n\,,\psi_n)\to d\mathcal{L}(t\,,\psi)$ as $n\to \infty$ in the norm of linear operators from $\mathbb{R}\times L^1(0\,,\omega)$ to $L^\infty(0\,,\omega)$.
\par
Finally, we have that $\hat h$ is differentiable and 
\[
d \tilde h(t\,,\psi_1\,,\psi_2)(\sigma\,,\delta_1\,,\delta_2)
=\mathcal{L}(t\,,\psi_2)\delta_1
+\psi_1 d\mathcal{L}(t\,,\psi_2)(\sigma\,,\delta_2).
\]
The  continuity of $d \tilde h$ with respect to $(t\,,\psi_1\,,\psi_2)$ can be proved analogously to the continuity of $d\hat h$ in Proposition \ref{fcontinua}.
\end{proof}

In its abstract formulation system~\eqref{modello nonlin} with usual initial conditions \eqref{e:IC} and boundary conditions~\eqref{BC} becomes the Cauchy problem
\begin{equation}
\tag{$\tilde{\mathcal{P}}$}
    \label{astratta1}
    \left\{
    \begin{split}
      & x'(t)=Ax(t)+\tilde  f(t, (x(t)) \\
      & x(0)=x_0
    \end{split}
    \right.
\end{equation}
 with $t\in[0\,,T]$, $x:[0\,,T]\to L^1\left([0\,,\omega]\,,\mathbb{R}^3\right)$,
 $x_0$ and $A$ as in Section~\ref{s:existence} and $\tilde f \colon [0,T]\times L^{1}\left([0\,,\omega]\,,\mathbb{R}^3\right)\to L^{1}\left([0\,,\omega]\,,\mathbb{R}^3\right)$  defined by
\[
\tilde f(t,\psi_1\,,\psi_2\,,\psi_3)(a)
 =\left(-\tilde \Lambda(t,a\,,\psi_2)\psi_1(a)\,,
     \tilde \Lambda(t,a\,,\psi_2)\psi_1(a)\,,0
 \right),
 \quad a\in[0\,,\omega],
\]
\begin{proposition}\label{p:tilde f}
 The map $\tilde f$ is continuous differentiable.  
\end{proposition}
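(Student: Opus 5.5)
The plan is to reduce the statement to Proposition~\ref{p:C}, since $\tilde f$ is built linearly out of $\tilde h$. By definition of $\tilde\Lambda$ in \eqref{e:g1} and of $\tilde h$ in \eqref{e:tilde-h}, for every $t\in[0\,,T]$ and $\psi=(\psi_1\,,\psi_2\,,\psi_3)\in L^{1}\left([0\,,\omega]\,,\mathbb{R}^3\right)$ we can write
\[
\tilde f(t\,,\psi)=\left(-\tilde h(t\,,P\psi)\,,\,\tilde h(t\,,P\psi)\,,\,0\right),
\]
where $P:L^{1}\left([0\,,\omega]\,,\mathbb{R}^3\right)\to L^{1}\left([0\,,\omega]\,,\mathbb{R}^2\right)$, $P\psi=(\psi_1\,,\psi_2)$, is the bounded linear projection. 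Likewise, $J:L^1(0\,,\omega)\to L^{1}\left([0\,,\omega]\,,\mathbb{R}^3\right)$, $J\zeta=(-\zeta\,,\zeta\,,0)$, is bounded and linear, with $\|J\zeta\|_1=2\|\zeta\|_1$. This gives the factorization $\tilde f=J\circ\tilde h\circ(\mathrm{id}_{[0,T]}\times P)$.

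Bounded linear maps are $C^1$, and their Fr\'echet derivative is the map itself, independent of the base point. By Proposition~\ref{p:C}, $\tilde h$ is continuously differentiable on $[0\,,T]\times L^{1}\left([0\,,\omega]\,,\mathbb{R}^2\right)$. The chain rule then shows that $\tilde f$ is differentiable, with
\[
d\tilde f(t\,,\psi)(\sigma\,,\delta)=J\,d\tilde h(t\,,P\psi)(\sigma\,,P\delta).
\]
Concretely, with $\mathcal{L}$ as in the proof of Proposition~\ref{p:C}, the first two components are $\mp\big(\mathcal{L}(t\,,\psi_2)\delta_1+\psi_1\,d\mathcal{L}(t\,,\psi_2)(\sigma\,,\delta_2)\big)$ and the third component is $0$.

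For continuity of $d\tilde f$, note that
\[
\|d\tilde f(t\,,\psi)-d\tilde f(t_n\,,\psi_n)\|\le 2\,\|d\tilde h(t\,,P\psi)-d\tilde h(t_n\,,P\psi_n)\|,
\]
because $\|P\|\le 1$ and $\|J\|=2$. The right-hand side tends to $0$ whenever $(t_n\,,\psi_n)\to(t\,,\psi)$, by the continuity of $d\tilde h$ established in Proposition~\ref{p:C}.

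The only delicate point is inherited from Proposition~\ref{p:C}: the continuity of $d\tilde h$ in operator norm, which was only sketched there by analogy with Proposition~\ref{fcontinua}. If a self-contained check is wanted, I would repeat the three-term estimate from Proposition~\ref{fcontinua}. The first term is $\|\mathcal{L}(t,\psi_2)-\mathcal{L}(t_n,\psi_2^n)\|_\infty$. The second is $\|d\mathcal{L}(t,\psi_2)\|\,\|\psi_1-\psi_1^n\|_1$. The third is $\|d\mathcal{L}(t,\psi_2)-d\mathcal{L}(t_n,\psi_2^n)\|\,\|\psi_1^n\|_1$. Each tends to zero by the uniform continuity of $\ell$, $\ell_t$ and $\ell_y$ on the compact set $[0,T]\times[0,\omega]\times[-r,r]$, where $r$ is a bound for $\|k\|_\infty\sup_n\|\psi_2^n\|_1$.
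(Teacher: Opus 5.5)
Your proposal is correct and follows the paper's route: the paper also writes $\tilde f(t,\psi)=\left(-\tilde h(t,\psi_1,\psi_2),\tilde h(t,\psi_1,\psi_2),0\right)$ and calls the result an immediate consequence of Proposition~\ref{p:C}. Your factorization $\tilde f=J\circ\tilde h\circ(\mathrm{id}\times P)$ with the chain rule just makes that step explicit, and your optional three-term check of the continuity of $d\tilde h$ supplies what the paper only asserts by analogy with Proposition~\ref{fcontinua}.
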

\begin{proof} Since, for $t \in [0,T], \, (\psi_1, \psi_2, \psi_3) \in L^{1}\left([0\,,\omega]\,,\mathbb{R}^3\right)$, 
\[
\tilde f(t, \psi_1, \psi_2, \psi_3)=\left(-\tilde h(t, \psi_1, \psi_2), \tilde h(t, \psi_1, \psi_2), 0\right),
\]
the result is an immediate consequence of Proposition~\ref{p:C}.
\end{proof}

We are now ready to state the existence result for problem \eqref{modello nonlin}.

\begin{theorem}
\label{t:SIRS2} The problem \eqref{modello nonlin}-\eqref{e:IC}-\eqref{BC}, under conditions  {\rm (H1)}, {\rm (H2}), {\rm(H3)}, ($\ell$) and \eqref{eq:x0} admits a unique mild solution in $\mathcal{C}\left([0\,,T]\,,L^{1}\left([0\,,\omega] \,,\mathbb{R}^3\right)\right)$ with nonnegative components. The solution depends continuously on the initial data.  Moreover, if $x_0\in D(A)$, the mild solution is a classical solution.

%Consider the system \eqref{modello nonlin} with boundary conditions \eqref{BC}.
%If the coeﬃcients of the problem  satisfy conditions {\rm (H1)},{\rm (H2}) and {\rm(H3)} and the function $\tilde \Lambda$ (see \eqref{e:g1}) condition ($\ell$),  then the Cauchy problem \eqref{modello nonlin}--\eqref{e:IC} with  initial conditions $x_0=(s_0\,, i_0\,, r_0)\in X^+$ admits a unique mild solution in  $\mathcal{C}\left([0\,,T]\,,L^{1}\left([0\,,\omega] \,,\mathbb{R}^3\right)\right)$ with nonnegative components.
   % \par
%    Moreover, if $x_0\in D(A)$, the mild solution is a classical solution.
\end{theorem}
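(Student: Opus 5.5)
The plan is to repeat, for the time-dependent nonlinearity $\tilde f$, the two-step scheme used for Theorem~\ref{t:SIRS}: first solve a truncated problem by Theorem~\ref{punto fisso condensante}, then show that the truncation is inactive.

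\textbf{Truncated problem.} Set $r_0:=\|k\|_\infty M(\|x_0\|_1+2)$, where $M$ is the constant in \eqref{e:n-stima}. Choose a $C^1$ cut-off $\Xi$ with the properties listed before Proposition~\ref{fcontinua}. Define
\[
\hat f(t,\psi)(a)=\bigl(-\ell(t,a,\Xi(\Lambda(a,\psi_2)))\psi_1(a),\,\ell(t,a,\Xi(\Lambda(a,\psi_2)))\psi_1(a),\,0\bigr).
\]
Arguing exactly as in Propositions~\ref{fcontinua} and \ref{p:C}, $\hat f$ is continuously differentiable. By \eqref{e:lips-ell}--\eqref{e:stima-ell} with $r=r_0$, it is locally Lipschitz in $\psi$ uniformly in $t$, with constant $C_\rho=2(r_0H_{r_0}+2\rho\|k\|_\infty H_{r_0})$. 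It also satisfies $\|\hat f(t,\psi)\|_1\le 2c\|\psi\|_1$ with $c:=r_0H_{r_0}$.

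Positivity of the perturbed nonlinearity is the one genuinely new point, and I expect it to be the main obstacle. For $\psi\in X^+$ the first component of $\mathcal F=\hat f+c\mathbb I$ is $(c-\ell(\cdot))\psi_1\ge 0$, because $|\ell|\le c$ on the truncated range. The second component is $c\psi_2+\ell(\cdot)\psi_1$, and it requires $\ell(t,a,\Xi(\Lambda))\ge0$. Since $k\ge0$ and $\psi_2\ge0$, we have $\Lambda\ge0$. Because $\Xi'\ge0$, we may also take $\Xi(0)=0$, so that $\Xi(\Lambda)\ge0$. Then ($\ell$) gives $\ell\ge0$, with $\ell(t,a,0)=0$ covering the case $\Lambda=0$. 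So $\mathcal F(t,\cdot)$ maps $X^+$ into $X^+$. If one insists on $\Xi$ with $\Xi(0)\ne0$, I would instead compose with $\max\{\cdot,0\}$ smoothed near $0$; this is harmless because $\ell(t,a,0)=0$.

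\textbf{Fixed-point argument.} With $\mathcal A=A-c\mathbb I$, Proposition~\ref{p:equivalenza} holds verbatim for $t$-dependent $f$. I then define
\[
\mathcal T(q,\lambda)(t)=\mathscr S(t)\hat x+\lambda\int_0^t\mathscr S(t-s)\mathcal F(s,q(s))\,ds
\]
on $\overline Q_K\times[0,1]$, with the same $R$, $Q$, $K$ as in the proof of Theorem~\ref{t:modificato} but with the new $c$. Hypotheses (1)--(5) of Theorem~\ref{punto fisso condensante} are checked line by line as in that proof, because only the Lipschitz bound, the growth bound and positivity were used. In particular, the $\nu$-condensing estimate with $N>L(C_R+c)$ goes through unchanged, since $\chi(\{\mathcal F(s,q_n(s))\}_n)\le (C_R+c)\chi(\{q_n(s)\}_n)$ for each fixed $s$. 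This yields a nonnegative mild solution. Uniqueness and classical regularity for $\hat x\in D(A)$ follow from Theorem~\ref{t:regularity}, and continuous dependence follows from the same Gr\"onwall estimate \eqref{dipendenza continua}.

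\textbf{Removing the truncation.} The population equation \eqref{equazione-n} still holds, because the $\tilde\Lambda$-terms cancel in the sum. Hence classical nonnegative solutions satisfy \eqref{e:n-stima}, and therefore $0\le\Lambda(a,i(t))\le\|k\|_\infty M\|x_0\|_1$, where $\Xi$ is the identity. I would first treat $x_0\in D(A)\cap X^+$. For general $x_0\in X^+$ I would approximate by data in $D(A)\cap X^+$, use continuous dependence, and pass to the limit, exactly as in the proof of Theorem~\ref{t:SIRS}. This gives $\hat f(t,x(t))=\tilde f(t,x(t))$, so $x$ solves \eqref{astratta1}. Finally, any nonnegative mild solution of \eqref{astratta1} satisfies the same bound, so it solves the truncated problem and is therefore unique.
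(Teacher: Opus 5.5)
Your proposal is correct and follows the paper's proof essentially step for step: truncation of the force of infection via $\Xi$, the shift $\mathcal A=A-c\mathbb I$, $\mathcal F=\hat f+c\mathbb I$ to restore positivity, the degree argument of Theorem~\ref{punto fisso condensante} on $\overline Q_K\times[0,1]$, and removal of the truncation via \eqref{e:n-stima} and density of $D(A)\cap X^+$. Two details are in fact more careful than the paper's appeal to ``the definition of $\hat c$'': your explicit check that the second component $c\psi_2+\ell\,\psi_1$ is nonnegative (using $k\ge0$, $\Xi'\ge0$, ($\ell$), and $\Xi(0)=0$, which is automatic since $\Xi(z)=z$ near $0$), and your choice of $c$ as a bound for $|\ell|$ rather than for $\ell$. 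The one step you leave unjustified, that every nonnegative mild solution of \eqref{astratta1} obeys the population bound, can be bypassed: $\tilde f(t,\cdot)$ is Lipschitz on bounded sets uniformly in $t$, so Gr\"onwall gives uniqueness of mild solutions of \eqref{astratta1} directly.
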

 \begin{proof} The proof follows the same approach developed in Section \ref{s:existence}. More precisely, we 
show the solvability of problem $(\tilde{\mathcal{P}})$,  which represents the abstract formulation of \eqref{modello nonlin}–\eqref{e:IC} with the boundary condition \eqref{BC}.
 
\noindent We introduce the new function $\hat g \colon [0,T]\times L^{1}\left([0\,,\omega]\,,\mathbb{R}^3\right)\to L^{1}\left([0\,,\omega]\,,\mathbb{R}^3\right)$  defined by
\[
\hat g(t,\psi_1\,,\psi_2\,,\psi_3)(a)
 =\left(-\ell(t,a, \Xi(\Lambda(a, \psi_2))\psi_1(a), \, \ell(t,a, \Xi(\Lambda(a, \psi_2))\psi_1(a), \, 0 \right),
\]
 $a\in[0\,,\omega]$, where $\Xi$ was defined in the previous section. Let 
\[
\hat c=\max\left\{\ell(t\,,a\,,y):\,\, t\in[0\,,T],\,
a\in[0\,,\omega],\, |y|\leq \|k\|_\infty M(\|x_0\|_1+2)\right\}.
\]
Using arguments analogous to those used in the proof of formula \eqref{lipschitz} in Proposition~\ref{fcontinua}, one can show that for every $\rho>0$
\begin{equation}\label{lips hat g}
\|\hat g(t\,,\psi)-\hat g(t\,,\varphi)\|_1
\leq \hat C_\rho\|\psi-\varphi\|_1
\end{equation}
for every $t\in[0\,,T]$ and for every $\psi,\varphi\in L^{1}\left([0\,,\omega]\,,\mathbb{R}^3\right)$, $\|\varphi\|_1\leq \rho$, where 
$\hat C_\rho=\left(2\hat c + 4H_r\|k\|_\infty\rho\right)$ and $r=\|k\|_\infty M(\|x_0\|_1+2)$.
Moreover, the definition of $\hat g$ implies that
\begin{equation*}
\| \hat g(t, \psi)\|_1 \le 2\hat c \| \psi\|_1
\end{equation*}
for $t \in [0,T]$ and $\psi \in L^{1}\left([0\,,\omega]\,,\mathbb{R}^3\right)$.

 \par
 
 We consider the new auxiliary  problems, in abstract space,
 \begin{equation}
 \label{eq:IVP-g-hat}
 \tag{$\hat{\mathcal{Q}}$}
    \left\{
    \begin{split}
      & x'(t)=Ax(t)+\hat g(t\,, x(t)) \\
      & x(0)=\hat x
    \end{split}
    \right.
\end{equation}
and 
\begin{equation}
    \label{astratta3}
    \left\{
    \begin{split}
      & x'(t)=\mathcal{A}x(t)+\mathcal{G}(t\,, x(t)) \\
      & x(0)=\hat x
    \end{split}
    \right.
\end{equation}
with $\mathcal{A}$ defined as in Section \ref{s:existence} and $\hat x \in X^+$ with $\| \hat x\| \le \|x_0\| +1$ (as in the statement of Theorem \ref{t:modificato}), while $\mathcal{G}=\hat g +\hat c\mathbb{I}$. 

\par
The two problems $\hat{\mathcal{Q}}$ and \eqref{astratta3} are equivalent by Proposition~\ref{p:equivalenza}. By the definition of $\hat c$, the function $\mathcal{G}$ preserves the positivity of the functions, i.e. $\mathcal{G}(t,\psi) \in X^+$ for all $t\in [0,T]$ and $\psi \in  X^+$.

The solvability of \eqref{astratta3} follows by analogy with the proof of Theorem ~\ref{t:modificato}.
In particular, with regard to the existence part, we now have to consider the solution operator
\[
\mathcal{T}(q\,,\lambda)(t)=\mathscr{S}(t)\hat x + \lambda\int_0^t\mathscr{S}(t-s)\mathcal{G}(s, q(s))\,ds,
\qquad t\in[0\,,T],
\] 
where now  $q$ belongs to the ball, centered in $0$ of radius $\hat R=L(\|x_0\|_1+1)e^{3\hat cT}$.
Though now $\mathcal{G}$ explicitly depends on $t$, the same reasoning applies, as in the proof of Theorem ~\ref{t:modificato}.

Furthermore, when $x$ and $y$ are two solutions of the equation in  \eqref{astratta3} with initial conditions $x_0$ and $y_0$, respectively, we obtain that
\[
\Vert x(t)-y(t)\Vert_1\leq  \|\mathscr{S}(t)(x_0-y_0)\|_1 + \int_0^t \|\mathscr{S}(t-s) \left(\mathcal{G}(s, x(s))-\mathcal{G}(s,y(s))\right)\|_1\, ds
\]
Therefore, since \eqref{lips hat g} implies the same Lipschitz continuity for $\mathcal G$ and with a reasoning similar to that in \eqref{dipendenza continua}, we obtain the continuous dependence of the solutions of \eqref{astratta3} on the initial data.

\noindent Now let $x\in \mathcal{C}\left([0\,,T]\,,L^{1}\left([0\,,\omega] \,,\mathbb{R}^3\right)\right)$ be the unique solution of ($\hat Q$).We claim that 
\begin{equation}\label{eq:claim}
\tilde f(t\,,x(t))=\hat g(t\,, x(t))\qquad\text{for every }t\in[0\,,T]. 
\end{equation} 
and hence $x$ is a solution of ($\hat P$), the equivalent abstract formulation of  \eqref{modello nonlin}-\eqref{e:IC}.
Again this claim can be proved in a similar way than in Theorem \ref{t:SIRS} so the proof is complete.
 \end{proof}

%%%%%%%%%%%%%%%%%%%%%%%%%%%%%%%%%%%%%%Acknowledgments%%%%%%%%%%%%%%%%%%%%%%%%%%%%%%%%%%%%%%%%%%%%%%%%%%%%%%%%%
\medskip
\textbf{Acknowledgments.} The authors are members of the Gruppo Nazionale per l'Analisi  Matematica, la Probabilit\`{a} e le loro Applicazioni (GNAMPA) of the Istituto Nazionale di Alta Matematica (INdAM) and acknowledge financial support from this institution.
L. Malaguti was partially supported by PRIN 2022 ``Modeling, Control and Games through Partial Differential Equations" (coordinator R.M. Colombo).
%%%%%%%%%%%%%%%%%%%%%%%%%%%%%%%%%%%%%%%%%%%%%%%%%%%%%%%%%%%%%%%%%%%%%%%%%%%%%%%%%%%%
                       %REFERENCES%
%%%%%%%%%%%%%%%%%%%%%%%%%%%%%%%%%%%%%%%%%%%%%%%%%%%%%%%%%%%%%%%%%%%%%%%%%%%%%%%%%%%%

\bibliographystyle{amsplain}
\bibliography{MP-ref}

\end{document}